\pgfplotsset{compat=1.14}
\newcommand{\N}{\mathbb{N}}
\newcommand{\C}{\mathbb{C}}
\theoremstyle{plain}
\newtheorem{thm}{Theorem}[section]
\newtheorem{lem}[thm]{Lemma}
\newtheorem{prop}[thm]{Proposition}
\newtheorem{cor}[thm]{Corollary}
\newtheorem{dfn}[thm]{Definition}
\newtheorem*{cond}{Condition (C)}
\newtheorem*{conv}{Convention}
\theoremstyle{definition}
\theoremstyle{remark}
\newtheorem*{rem}{Remark}
\newtheorem*{nota}{Notation}
\def\Vs{\mathcal{V}\xspace}
\def\Vsr{$\mathcal{V}$\xspace}
\title[Graph classes with few $P_4$'s: Universality and Brownian graphon limits]{Graph classes with few $P_4$'s: Universality and Brownian graphon limits}
 \author{Théo Lenoir}
\begin{document}
\maketitle

\begin{abstract}
We consider large uniform labeled random graphs in different classes
with few induced $P_4$ ($P_4$ is the graph consisting of a single line of
$4$ vertices) which generalize the case of cographs. Our main result
is the convergence to a Brownian limit object in the space of
graphons. As a by-product we obtain new asymptotic enumerative results for all these graph classes.
We also obtain typical density results for a wide variety of induced subgraphs. These asymptotics hold at a smaller scale than what is observable through the graphon convergence.

Our proofs rely on tree encoding of graphs. We then use mainly
combinatorial arguments, including the symbolic method and singularity
analysis.
\end{abstract}

\section{Introduction}

\subsection{Motivation}
Random graphs are one of the most studied objects in probability theory and in combinatorics. A natural question is to investigate the scaling limits of a uniformly chosen graph in a given family (an important example for this paper are the cographs).

Cographs have been studied since the seventies by various authors, especially for their algorithmic properties: recognizing cographs can be solved in linear time \cite{algo1, algo2, algo3}, and many hard problems can be solved in polynomial time for cographs. Several equivalent definitions exists of the class of cographs exists, here are two important ones:

\begin{itemize}
    \item A graph is a cograph if and only if it has no induced $P_4$ (a line of $4$ vertices).
    \item The class of cograph is the smallest class containing every graph reduced to a single vertex, and stable by union and by join\footnote{the join of two graphs $(G,H)$ is the graph obtained by adding an edge between every pair of vertices $(g,h)\in G\times H$}.
\end{itemize}

Simultaneously in \cite{bassino2021random} and \cite{bs}, the authors exhibit a Brownian limit object for a uniform cograph, called the Brownian cographon, which can be explicitly constructed from the Brownian excursion and a parameter $p\in[0,1]$. 

The convergence holds in distribution in the sense of \emph{graphons}. Introduced in \cite{graphon}, graphons are a well-established topic in graph theory but their probabilistic counterparts are more recent. Graphon convergence can be seen as the convergence of the renormalized adjacency matrix for the so-called cut metric (a good reference on graphon theory is \cite{lovasz}).

To go further than the case of cographs, we may investigate more complicated classes with, in some specific sense, few $P_4$'s. A natural question is to study classes of graphs to which some algorithmic properties of cographs extend. Several classes characterized by properties of their induced $P_4$'s have thus been considered in the graph theory literature. The classes we will focus on here are the following: $P_4$-reducible graphs \cite{reduc, reductible}, $P_4$-sparse graphs \cite{sparse, jamison} $P_4$-lite graphs \cite{lite}, $P_4$-extendible graphs \cite{extendible} and $P_4$-tidy graphs \cite{tidy} which can all be seen as classes defined by some constraints on the induced $P_4$'s. All these classes will be defined precisely in \Cref{sec:cla}. The inclusion relations between these classes are sketched in \Cref{fig1}.

\begin{figure}[htbp]
\begin{center}
\centering
\includegraphics[scale=0.8]{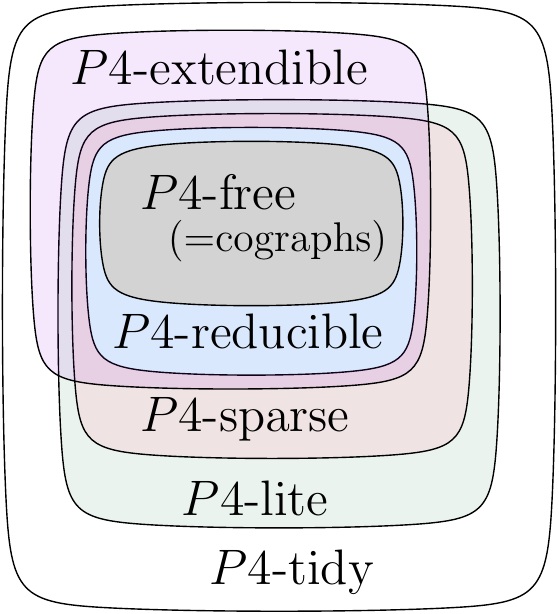}
\caption{Inclusion relations between the different classes of graphs}\label{fig1}
\end{center}
\end{figure}

To our knowledge, these different classes have not been studied from a probabilistic point of view. The main aim of this paper is to prove a result of universality of the Brownian cographon: for every class previously mentioned, a random graph will converge towards the Brownian cographon of parameter $\frac{1}{2}$ (the rigorous construction is given by \cite[Definition 10]{bassino2021random}). An intermediate result is the asymptotic enumeration of each of these classes, which was unknown up to now.

\subsection{Main results}

For a finite graph $G$, let $W_G$ be the embedding of the finite graph $G$ in the set of graphon (the formal construction will be recalled in \Cref{graphe-graphon}). Our main result is:

\begin{thm}\label{cgbintro}
Let $\mathbf{G}^{(n)}$ be a graph of size $n$ taken uniformly at random in one of the following families: $P_4$-sparse, $P_4$-tidy, $P_4$-lite, $P_4$-extendible or $P_4$-reducible. The following convergence in distribution holds in the sense of graphons:

$$W_{\mathbf{G}^{(n)}}\stackrel{n\to \infty}{\longrightarrow} \mathbf{W}^{\frac{1}{2}}$$

\noindent where $\mathbf{W}^{\frac{1}{2}}$ is the Brownian cographon of parameter $\frac{1}{2}$.
\end{thm}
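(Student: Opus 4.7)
The plan is to reduce the statement, class by class, to the Brownian graphon convergence for uniform cographs established in~\cite{bassino2021random, bs}. For each family \Vsr listed in the theorem, a classical structure theorem (from the references cited in the introduction) provides a canonical tree encoding of the graphs in \Vsr in which every internal node is of one of three kinds: \emph{parallel} (disjoint union), \emph{series} (join), or one of finitely many \emph{exceptional} types specific to the class (spiders, fat spiders, small prime modules, etc.). A graph in \Vsr is a cograph exactly when no exceptional node appears, and in every class the internal structure of an exceptional node is strongly rigid: only boundedly many of its children can be nontrivially entangled by the node's defining adjacency pattern.

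The first step is to turn this encoding into a combinatorial specification and apply the symbolic method. Writing $T(z)$ for the exponential generating function of the tree class, the specification yields an algebraic system whose dominant singularity is of square-root type, so singularity analysis delivers the enumeration asymptotic $|\Vs_n|\sim c\,\rho^{-n}\,n^{-5/2}\,n!$ promised by the abstract. More importantly, the same analytic input produces the probabilistic description of the encoding tree of a uniformly random graph in \Vsr: it is a conditioned Galton--Watson-type tree whose non-exceptional skeleton, suitably rescaled, converges to the Brownian CRT, while the total mass (number of descendants) subtended by exceptional internal nodes is $o_{\mathbb{P}}(n)$.

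The second step is to transfer this to graphon convergence. At each non-exceptional internal node the label is a binary series/parallel choice; since the specification is symmetric under complementation, these labels are asymptotically i.i.d.\ Bernoulli$(1/2)$ in the sense required by the construction of the Brownian cographon \cite[Definition~10]{bassino2021random}, which is what pins the parameter to $p=1/2$. The argument of \cite{bassino2021random, bs} then converts the CRT convergence of the labeled tree into convergence of the associated graphon to $\mathbf{W}^{1/2}$. To accommodate the exceptional nodes, I would introduce a coupling with the cograph obtained by replacing each exceptional node by a plain parallel node: since each exceptional node only affects $O(1)$ of its children in a nontrivial way, the adjacency matrices differ on a set of entries whose total area is $o_{\mathbb{P}}(n^2)$, so the cut distance between the two graphons tends to $0$ in probability.

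The main obstacle is precisely this last coupling: one must show uniformly, for each of the five classes, that exceptional nodes are both rare and \emph{local}, so that they are invisible at the graphon scale while still being combinatorially essential in order to leave the class of cographs. This requires combining the structural description of each exceptional node type (to bound the number of children it genuinely entangles and the size of the corresponding adjacency pattern) with the singularity analysis of the first step (to bound the total number of exceptional nodes and the sizes of the subtrees they root). Once this quantitative control is in place, the convergence stated in the theorem follows from the universality of the Brownian cographon construction applied to the cograph obtained after the replacement.
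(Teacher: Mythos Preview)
Your approach is genuinely different from the paper's. The paper never couples with a cograph; it works directly on the $(\mathcal{P},\mathcal{P}^\bullet)$-consistent tree and applies the sampling criterion of \cite[Lemma~4.4]{bassino2021random} (restated here as \Cref{111}). For every labeled binary tree $\tau$ with $\ell$ leaves it computes the exponential generating function $T_\tau$ of trees in $\mathcal{T}_{\mathcal{P},\mathcal{P}^\bullet}$ carrying $\ell$ marked leaves that induce $\tau$ (\Cref{thmcoeur}), extracts the singular behaviour, and checks that $\mathbb{P}(\mathbf{T}^{(n)}_{\mathfrak{I}_\ell}=\tau)\to (\ell-1)!/(2\ell-2)!$. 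Non-linear (``exceptional'') nodes disappear from the leading asymptotics because the configurations in which some vertex of the induced subtree sits at a non-linear node of $\mathbf{T}^{(n)}$ contribute a strictly smaller power of $(1-z/R)^{-1}$; no coupling is needed.

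Your outline has two genuine gaps. First, the structural claims that drive the coupling are false. A prime spider with parameter $|K|$ has $2|K|$ leaf children, all of them entangled by the spider adjacency pattern, and $|K|$ is unbounded; so ``each exceptional node only affects $O(1)$ of its children'' already fails for $P_4$-sparse, $P_4$-lite and $P_4$-tidy. More seriously, the total mass attached to exceptional nodes is \emph{not} $o_{\mathbb P}(n)$: marking non-linear nodes with an extra variable in \cref{ecu123} and differentiating shows that the expected number of non-linear internal nodes in a size-$n$ tree is $\Theta(n)$, each carrying at least four leaf children, so a positive fraction of all leaves are leaf-children of non-linear nodes. What you actually need for the cut-distance bound is that two \emph{uniform} leaves have a non-linear first common ancestor with probability $o(1)$; but proving that is exactly the $\ell=2$ instance of the paper's induced-subtree computation, so the coupling does not bypass that work.

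Second, even granting that $\delta_\square$ between $\mathbf G^{(n)}$ and the replaced cograph is $o_{\mathbb P}(1)$, that cograph is not a uniform cograph, so the theorems of \cite{bassino2021random,bs} do not apply to it off the shelf. You still owe a proof that this particular non-uniform random cograph (equivalently, the tree $\mathbf T^{(n)}$ with every non-linear decoration overwritten by $\ominus$) converges to $\mathbf W^{1/2}$, and the only available route --- computing the law of the subtree induced by $\ell$ uniform leaves --- is precisely what the paper carries out on the original model. (Incidentally, the polynomial correction in the enumeration is $n^{-3/2}$, not $n^{-5/2}$: the dominant singularity of $T$ is of type $(1-z/R)^{1/2}$, see \Cref{thm5}.)
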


Graphon convergence is equivalent to the joint convergence of subgraphs density. Diaconis and Janson extended this criterion in \cite{janson} to random graphs: the convergence of a family $(\mathbf{H}^{(n)})_{n\geq 1}$ of random graphs is characterized by the convergence in distribution of $\frac{\mathrm{Occ}_{\mathbf{H}^{(n)}}(H)}{n^k}$ for every positive integer $k$ and for every finite graph $H$ of size $k$, where $\mathrm{Occ}_G(H)$ is the number of induced subgraphs of $G$ isomorphic to $H$. All the necessary material on graphon will be recalled at the beginning of \Cref{sec6}.

\Cref{fig2} shows an example of the adjacency matrix of a random $P_4$-extensible graph of size $200$. This picture gives an idea of what a realization of the Brownian cographon could look like.

\begin{figure}[htbp]
\begin{center}
\centering
\includegraphics[scale=1]{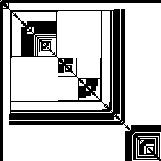}
\caption{The adjacency matrix of a random $P_4$-extensible graph of size $200$, simulation by Mickaël Maazoun}\label{fig2}
\end{center}
\end{figure}

In the course of proving \Cref{cgbintro}, we get an equivalent of the number of graphs in the different classes.

\begin{thm}\label{enumintro}
The number of labeled $P_4$-sparse, $P_4$-tidy, $P_4$-lite, $P_4$-extendible, $P_4$-reducible or the number of $P_4$-free graphs of size $n$ is asymptotically equivalent to 
$$C \frac{n!}{R^nn^{\frac{3}{2}}},$$
for some $R,C>0$, depending on the class.
\end{thm}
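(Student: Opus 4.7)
The plan is to exploit the fact that every class listed in \Cref{enumintro} admits a canonical tree decomposition extending the cotree of cographs: for each such class, the modular decomposition tree of any of its members has internal nodes drawn from a finite palette --- the \emph{parallel} and \emph{series} nodes familiar from cographs, plus a finite list of extra "prime" types specific to the class (spiders for $P_4$-sparse and $P_4$-reducible, and richer lists including fat spiders, $P_5$, $\overline{P_5}$, and $C_5$ for $P_4$-tidy, with analogous finite lists for $P_4$-lite and $P_4$-extendible). I would first extract and unify these structural results from \cite{reduc, sparse, jamison, lite, extendible, tidy} into a single combinatorial specification for each class: a family of labeled trees on $n$ leaves whose internal nodes are colored from a finite palette, with local rules (in particular no two consecutive parallel nodes, nor two consecutive series nodes, along an edge) making the decomposition canonical.

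From such a specification, the symbolic method produces a polynomial system for the exponential generating functions. Writing $C_0$ (resp.\ $C_1$) for the EGF of trees rooted at a parallel (resp.\ series) node and $C$ for the EGF of the class itself, the system has the schematic form
\begin{align*}
C_0 &= \mathrm{Set}_{\geq 2}(z + C_1) + P_0(z, C_0, C_1),\\
C_1 &= \mathrm{Set}_{\geq 2}(z + C_0) + P_1(z, C_0, C_1),\\
C   &= z + C_0 + C_1 + Q(z, C_0, C_1),
\end{align*}
where $\mathrm{Set}_{\geq 2}(x) = e^x - 1 - x$ and $P_0, P_1, Q$ are polynomials of bounded degree encoding the admissible prime nodes (with $P_0 = P_1 = Q = 0$ recovering the cograph case). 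Eliminating the auxiliary unknowns yields a single implicit equation $F(z, C) = 0$ with $F$ analytic near the origin.

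I would then apply the smooth implicit function schema of Flajolet--Sedgewick to this system. The specification is polynomial, strongly connected, and aperiodic, so at the dominant positive singularity $z = R$ one has $\partial_C F(R, C(R)) = 0$ and $\partial_z F(R, C(R)) \neq 0$, forcing a square-root singularity
$$C(z) = C(R) - c\sqrt{1 - z/R} + O(1 - z/R)$$
near $z = R$. The standard transfer theorems give $[z^n] C(z) \sim \tfrac{c}{2\sqrt{\pi}} R^{-n} n^{-3/2}$, and multiplying by $n!$ (since $C$ is an EGF) yields precisely the estimate of \Cref{enumintro} with constant $C = c/(2\sqrt{\pi})$ and radius $R$ depending on the class.

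The main difficulty lies in the first step. While modular decomposition trees with restricted internal nodes are well-documented in the graph-theoretic literature, extracting from each reference a clean, unambiguous bijection --- a labeled-tree family in which every graph of the class is represented exactly once --- requires careful bookkeeping: symmetries of the prime nodes, constraints on which cograph-like subtrees may hang off each spine of a spider, and the prevention of double counting across node types must be handled uniformly across all five generalizations of cographs. Once these specifications are in place, verifying the non-degeneracy hypotheses of the smooth implicit function schema reduces to a direct numerical computation at the critical point of each resulting system and is by far the least delicate part of the argument.
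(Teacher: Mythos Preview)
Your overall strategy --- encode each class by modular-decomposition trees, translate into an implicit equation for the EGF via the symbolic method, invoke the smooth implicit-function schema to obtain a square-root singularity, then transfer --- is exactly the route taken in the paper.

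There is, however, one factual slip that matters. You assert that the prime-node palette is \emph{finite} and that the correction terms $P_0,P_1,Q$ are \emph{polynomials}. This is true for cographs, $P_4$-reducible, and $P_4$-extendible, but \emph{false} for $P_4$-sparse, $P_4$-lite, and $P_4$-tidy: in those classes the allowed prime decorations include spiders (and pseudo-spiders) with clique part $K$ of \emph{every} size $|K|\ge 2$, so the palette is infinite and the corresponding generating functions are genuine power series (they involve $\exp(z^2)$, not polynomials). Consequently your appeal to a ``polynomial, strongly connected'' system does not apply as stated. The repair is easy in principle --- what the smooth implicit-function schema actually requires is that the non-linear part be analytic in a disk strictly containing the dominant singularity $R$ of $C$, and here the spider series are entire --- but you must check this explicitly rather than rely on finiteness. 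The paper handles this by computing the spider EGFs in closed form, observing that their radius is $+\infty$, and then locating $R$ as the unique solution of a transcendental equation inside that domain; your write-up should include the analogous verification.
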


We can compute with arbitrary precision the numerical values of $R$ and $C$ (see \cref{sec4.2}). All the numerical values of $R$ and $C$ vary according to each class which confirms that all these classes are significantly different.

\Cref{cgbintro} provides a precise estimation of $\mathrm{Occ}_{H}(\mathbf{G}^{(n)})$ for every cograph $H$. But for every graph $H$ which is not a cograph, the only information given by the convergence in the sense of graphon is that the number of induced $H$ in $\mathbf{G}^{(n)}$ is typically $o(n^{|H|})$. Quite unexpectedly, thanks to the tools developed to prove \Cref{cgbintro}, we are able to estimate the expected number of induced subgraphs isomorphic to a specific class of graphs $H$ in $\mathbf{G}^{(n)}$: the graphs that are called "prime" for the modular decomposition (see \Cref{prime}). 

\begin{thm}\label{P4intro}
Let $\mathbf{G}^{(n)}$ be a graph of size $n$ taken uniformly at random in one of the following families: $P_4$-sparse, $P_4$-tidy, $P_4$-lite, $P_4$-extendible or $P_4$-reducible.
Let $H$ be a prime graph, denote by $\mathrm{Occ}_{H}(\mathbf{G}^{(n)})$ the number of labeled subgraphs of $\mathbf{G}^{(n)}$ isomorphic to $H$.

Then there exists $K_H\geq0$ such that:
$$\mathbb{E}[\mathrm{Occ}_{H}(\mathbf{G}^{(n)})]\sim\begin{cases}
     &K_H n^{\frac{3}{2}}\qquad \text{if $H$ verifies condition $(A)$}\\
    &K_H n\qquad \text{otherwise}
\end{cases}$$
where $(A)$ is defined p.\pageref{thmprime} and constant $K_H$ is given in \Cref{thmprime}.
\end{thm}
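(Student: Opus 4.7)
The plan is to exploit the modular decomposition encoding of graphs in each class, the same tree encoding already used to prove \Cref{cgbintro} and \Cref{enumintro}. Recall that in each of these classes the modular decomposition tree has internal nodes of only three types: parallel (disjoint union), series (join), and prime, where the prime nodes come from a \emph{finite} class-dependent list of allowed prime graphs. This finiteness is what powers both the enumeration and the universality of the Brownian cographon, and it will be essential here too.

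The first step is to localise an induced copy of $H$ inside a graph $G$ of the class. Because $H$ is prime, any injection $\iota\colon V(H)\hookrightarrow V(G)$ realising $H$ as an induced subgraph admits a least common ancestor $v$ in the modular decomposition tree of $G$, and $v$ cannot be parallel or series (otherwise $\iota(V(H))$ would split as a non-trivial modular partition, contradicting primeness of $H$). Hence $v$ is a prime node, and the distribution of $\iota(V(H))$ among the children subtrees of $v$ yields a surjection $\pi$ from $V(H)$ to (a subset of) the children of $v$ that identifies $H$ as an ``inflation'' of the prime graph attached to $v$ along the quotient $\pi$. Since the list of allowed prime labels is finite, classifying such configurations is a finite bookkeeping: either $\pi$ is a bijection onto some vertices of the prime label of $v$ (one vertex of $H$ per child subtree used), or $\pi$ collapses several vertices of $H$ into a single child subtree, in which case those vertices form an induced subgraph of $H$ which itself must be a module of $H$, and again by primeness of $H$ this sub-module must have size $0$ or $1$ — except when all but one vertex of $H$ is sent to a single subtree, which is the configuration producing the smaller scaling. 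Condition $(A)$ is exactly the statement that $H$ admits a realisation of the first type inside at least one allowed prime label of the class.

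Next I would translate this classification into generating functions, reusing the system that produces $T(z)$ in the proof of \Cref{enumintro}. The tree generating function of each class has a square-root singularity of the shape $T(z) = \tau_0 - c\sqrt{R - z} + O(R-z)$ at $z = R$, which yields the $n!\,C R^{-n} n^{-3/2}$ asymptotic. The counting series of triples $(G, v, \iota)$, with $G$ a tree in the class, $v$ a marked prime node, and $\iota$ an induced copy of $H$ rooted at $v$, is obtained by marking one internal node with one of finitely many local patterns, and re-expanding the remaining children as copies of $T(z)$. The bivariate analysis reduces to
\[
\mathbb{E}\bigl[\mathrm{Occ}_H(\mathbf{G}^{(n)})\bigr]
= \frac{[z^n]\, F_H(z, T(z))}{[z^n]\, T(z)},
\]
where $F_H$ is a polynomial in $T(z)$ (and $z$) whose shape is dictated by the local configuration classified in Step~1.

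Singularity analysis then produces the dichotomy. Under condition $(A)$, the marker $F_H$ involves a maximal number of free children, each contributing a factor $T(z)$ whose leading singular term $\sqrt{R-z}$ remains in $F_H$, so $[z^n] F_H \sim c_H R^{-n} n^{-3/2}$ and the ratio gives $K_H n^{3/2}$. Otherwise, $F_H$ is analytic (or smoother) at $z = R$ because one of the allegedly free child slots is forced to carry a specific small pattern (a single vertex or a fixed subtree), removing one square-root factor; the ratio then scales as $n$. The constants $K_H$ are read off from the singular expansions, and reduce to sums over allowed prime labels of $H$-compatible configurations, weighted by $\tau_0 = T(R)$ and by the class-dependent constants $R, C$ already obtained for \Cref{enumintro}; this produces the explicit formulae of Theorem \ref{thmprime}.

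The main obstacle is Step~1: the careful combinatorial classification of how a prime $H$ can sit above each allowed prime label of the class, together with a proof that primeness of $H$ forbids any truly mixed configuration. Once this is fully tabulated, Steps~2–4 are a standard symbolic-method/singularity-analysis exercise of the same flavour as the proofs of \Cref{cgbintro,enumintro}, but the bookkeeping depends on the class and on which condition $H$ satisfies, and this is where the case $(A)$ and the precise value of $K_H$ emerge.
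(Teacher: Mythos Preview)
Your overall skeleton (localise the prime $H$ at a non-linear node of the encoding tree, write a marked generating function, apply singularity analysis) matches the paper's approach, but the mechanism you describe for the dichotomy is incorrect, and this is where the argument would break. First, the list of allowed prime decorations is \emph{not} finite for $P_4$-sparse, $P_4$-lite and $P_4$-tidy graphs: the spiders form an infinite family, so your ``finite bookkeeping'' step does not go through as stated and the relevant contribution is captured instead by the analytic series $\mathrm{Occ}_{H,\mathcal{P}}$, $\mathrm{Occ}_{H,\mathcal{P}^\bullet}$, $\mathrm{Occ}_{H,a,\mathcal{P}^\bullet}$. Second, in these tree encodings a non-linear node has \emph{all leaves} as children except for at most one child carrying a full subtree (the blossom position in case $(D4)$); there is no configuration with ``a maximal number of free children each contributing a factor $T(z)$''. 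Your proposed exception ``all but one vertex of $H$ sent to a single subtree'' is in fact impossible for prime $H$ (that subset would be a non-trivial module), so it cannot be the source of the smaller scaling.

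The correct dichotomy, and the actual content of condition $(A)$, is whether one of the marked leaves of $H$ can sit \emph{inside} that unique free subtree attached at the blossom of an element of $\mathcal{P}^\bullet$. When this happens the marked-leaf contribution is a factor $T'(z)\sim (1-z/R)^{-1/2}$, which combined with the ever-present $T^{\mathsf{blo}}\sim (1-z/R)^{-1/2}$ (coming from the path root$\to$prime node) gives $T_\tau\sim (1-z/R)^{-1}$ and hence $n^{3/2}$; otherwise only $T^{\mathsf{blo}}$ is singular, $T_\tau\sim (1-z/R)^{-1/2}$, and one gets $n$. Factors of $T(z)$ itself are harmless since $T$ is finite at $R$. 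So condition $(A)$ is not ``$H$ embeds as an induced subgraph of some allowed prime label'' but rather ``$H$ embeds in some element of $\mathcal{P}^\bullet$ with one vertex of $H$ landing on the blossom'', i.e.\ $\mathrm{Occ}_{H,a,\mathcal{P}^\bullet}(R)>0$ for some $a$.
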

This results follows from \Cref{thmprime} which is stated in a more general setting.
The condition $(A)$ depends on the class of graphs, checking if $H$ verifies condition $(A)$ and if $K_H$ is positive is quite straightforward.

To make things more concrete, let us apply \Cref{P4intro} to the example of $H=P_4$. We can check that for each class $P_4$ does not verify condition $(A)$ and that $K_{P_4}>0$. Thus a uniform random graph contains in average a linear number of induced $P_4$, while \Cref{cgbintro} only implies that this number is $o(n^4)$. The different numerical values of $K_{P_4}$ are explicitly computed p.\pageref{cora}, and happen to take different values for each class. For each class, the graph called \emph{bull} (see \cref{bull}) verifies condition $(A)$ and that $K_{\mathrm{bull}}>0$. Thus a uniform random graph contains in average a number of induced bulls growing as $n^{3/2}$, while \Cref{cgbintro} only implies that this number is $o(n^5)$. However, for non prime graphs $H$, the behavior of the expected value of induced subgraphs of $\mathbf{G}^{(n)}$ isomorphic to $H$ is not well-understood, which leads to interesting open questions.

\subsection{Proof strategy}

The proof is essentially combinatorial and is based on modular decomposition, which allows to encode a graph with a decorated tree. Modular decomposition is a standard tool in graph theory (it was introduced in the $60$'s by Gallai \cite{Gallai}) but to our knowledge it has been very little used in the context of random graphs. In this paper we introduce an enriched modular decomposition which enables us to obtain exact enumerations for a large family of graph classes. The five classes mentioned before fit in this framework.
We exploit those enumerative results with tools from analytic combinatorics to get asymptotic estimates in order to prove \Cref{enumintro}.

The more technical part of the proof is, for every finite graph $H$, to estimate the number of induced subgraphs of $\mathbf{G}^{(n)}$ isomorphic to $H$. The enriched modular decomposition allows us to count the number of graphs with a specific induced subgraph $H$. Again asymptotics are derived with tools from combinatorics to prove \Cref{cgbintro} and \Cref{P4intro}.

\subsection{Outline of the paper}

\begin{itemize}
    \item In \Cref{sec2} we define the encoding of graphs with trees, the modular decomposition and the enriched modular decomposition which will be used throughout the different proofs.
    \item  \Cref{sec:cla} presents the necessary material on the different classes of graphs studied: results are already widely known, most of them are quoted from the litterature and reformulated to suit our enriched modular decomposition.
    \item \Cref{sec4,sec5} are about calculating generating series related to our graph classes: in \Cref{sec4} we prove \Cref{enumintro} and \Cref{sec5} deals with the generating series of graphs with a given induced subgraph.
    \item \Cref{sec6} presents the necessary material on graphons, and the proofs of \Cref{cgbintro} and \Cref{P4intro}.
\end{itemize}

\section{Modular decomposition of graphs: old and new}\label{sec2}
\subsection{Labeled graphs}
In the following all the graphs considered are simple and finite.  Each time a graph $G$ is defined, we denote by $V$ its set of vertices and $E$ its set of edges. Whenever there is an ambiguity, we denote by $V_G$ (resp.~$E_G$) the set of vertices (resp.~edges) of $G$.

\begin{dfn}

We say that $G=(V,E)$ is a \emph{weakly-labeled graph} if every element of $V$ has a distinct label in $\N$ and that $G=(V,E)$ is a \emph{labeled graph} if every element of $V$ has a distinct label in $\{1,\dots, |V|\}$. 

The \emph{size of a graph} $G$, denoted by $|G|$, is its number of vertices. 

The \emph{minimum of a graph} $G$, denoted $\mathrm{min}(G)$, is the minimal label of its vertices.

\end{dfn}

In the following, every graph will be labeled, otherwise we will mention explicitly that the graph is weakly-labeled.

\begin{rem}
We do not identify a vertex with its label. A vertex of label $i$ will be denoted $v_i$. The label of a vertex $v$ will be denoted $\ell(v)$.
\end{rem}

\begin{dfn}\label{red}
For any weakly-labeled object (graph or tree) of size $n$, we call \emph{reduction} the operation that reduces its labels to the set $\{1,\dots,n\}$ while preserving the relative order of the labels.
\end{dfn}

For example if $G$ has labels $2,4,12,63$ then the reduced version of $G$ is a copy of $G$ in which  $2, 4, 12, 63$ are respectively replaced by $1,2,3,4$.

\subsection{Encoding graphs with trees}

\begin{dfn}\label{2.3}
Let $G$ be a graph of size $n$ and $H_1,\dots, H_{n}$ be weakly-labeled graphs such that no label is given to two distinct vertices of $\bigcup_{i=1}^{n} H_i$. The graph \emph{$G[H_1,\dots, H_{n}]=(V,E)$} is the graph whose set of vertices is $V=\bigcup_{i=1}^n V_{H_i}$ and such that:
\begin{itemize}
\item for every $i\in \{1,\dots,n\}$ and every pair $(v,v')\in V_{H_i}^2$, $\{v,v'\}\in E$ if and only if $\{v,v'\}\in E_{H_i}$;
\item for every $(i,j)\in \{1,\dots,n\}$ with $i\neq j$, and every pair $(v,v')\in V_{H_i}\times V_{H_j}$, $\{v,v'\}\in E$ if and only if $\{v_i,v_j\}\in E_G$.
\end{itemize}
\end{dfn}

\begin{nota}

In \Cref{2.3} we will use the shortcut $\oplus$ for the complete graph of size $n$. Thus $\oplus[H_1,\dots ,H_n]$ is the graph obtained from copies of $H_1,\dots,H_n$ in which for every $i\neq j$ every vertex of $H_i$ is connected to every vertex of $H_j$. This graph is called the \emph{join} of $H_1,\dots,H_n$ 

We use the shortcut $\ominus$ for the empty graph of size $n$. Thus $\ominus[H_1,\dots ,H_n]$ is the graph given by the disjoint union of $H_1,\dots,H_n$ This graph is called the \emph{union} of $H_1,\dots,H_n$.

\end{nota}

This construction allows us to transform non-plane labeled trees with internal nodes decorated with graphs, $\oplus$ and $\ominus$ into graphs.

\begin{dfn}\label{d3}
Let $\mathcal{T}_0$ be the set of rooted non-plane trees whose leaves have distinct labels in $\N$ and whose internal nodes carry decorations satisfying the following constraints: 
\begin{itemize}
    \item internal nodes are decorated with $\oplus$, $\ominus$ or a graph;
    \item If a node is decorated with some graph $G$ then $|G|\geq 2$ and this node has $|G|$ children. If a node is
decorated with $\oplus$ or $\ominus$ then it has at least 2 children.
   
\end{itemize}

A tree $t\in \mathcal{T}_0$ is called a \emph{substitution tree} if the labels of its leaves are in $\{1,\dots, |t|\}$.

\end{dfn}

We call \emph{linear} the internal nodes decorated with $\oplus$ or $\ominus$ and \emph{non-linear} the other ones.

\begin{nota}
For a non-plane rooted tree $t$, and an internal node $v$ of $t$, let $t_v$ be the multiset of trees attached to $v$ and let $t[v]$ be the non-plane tree rooted at $v$ containing only the descendants of $v$ in $t$. 
\end{nota}

\begin{conv}

We only consider non-plane trees. However it is sometimes convenient to order the subtrees of a given node. The convention is that for some $v$ in a tree $t$ the trees of $t_v$ are ordered according to their minimal leaf labels.

\end{conv}

\begin{dfn}\label{d2}
Let $t$ be an element of $\mathcal{T}_0$, the weakly-labeled graph \emph{$\mathrm{Graph}(t)$} is inductively defined as follows: 
\begin{itemize}
    \item if $t$ is reduced to a single leaf labeled $j$, $\mathrm{Graph}(t)$ is the graph reduced to a single vertex labeled $j$;
    \item otherwise, the root $r$ of $t$ is decorated with a graph $H$, and $$\mathrm{Graph}(t)=H[\mathrm{Graph}(t_1),\dots, \mathrm{Graph}(t_{|H|})]$$
    where $t_i$ is the $i$-th tree of $t_r$.
\end{itemize}
\end{dfn}

\begin{figure}[htbp]
\begin{center}
\includegraphics[scale=0.6]{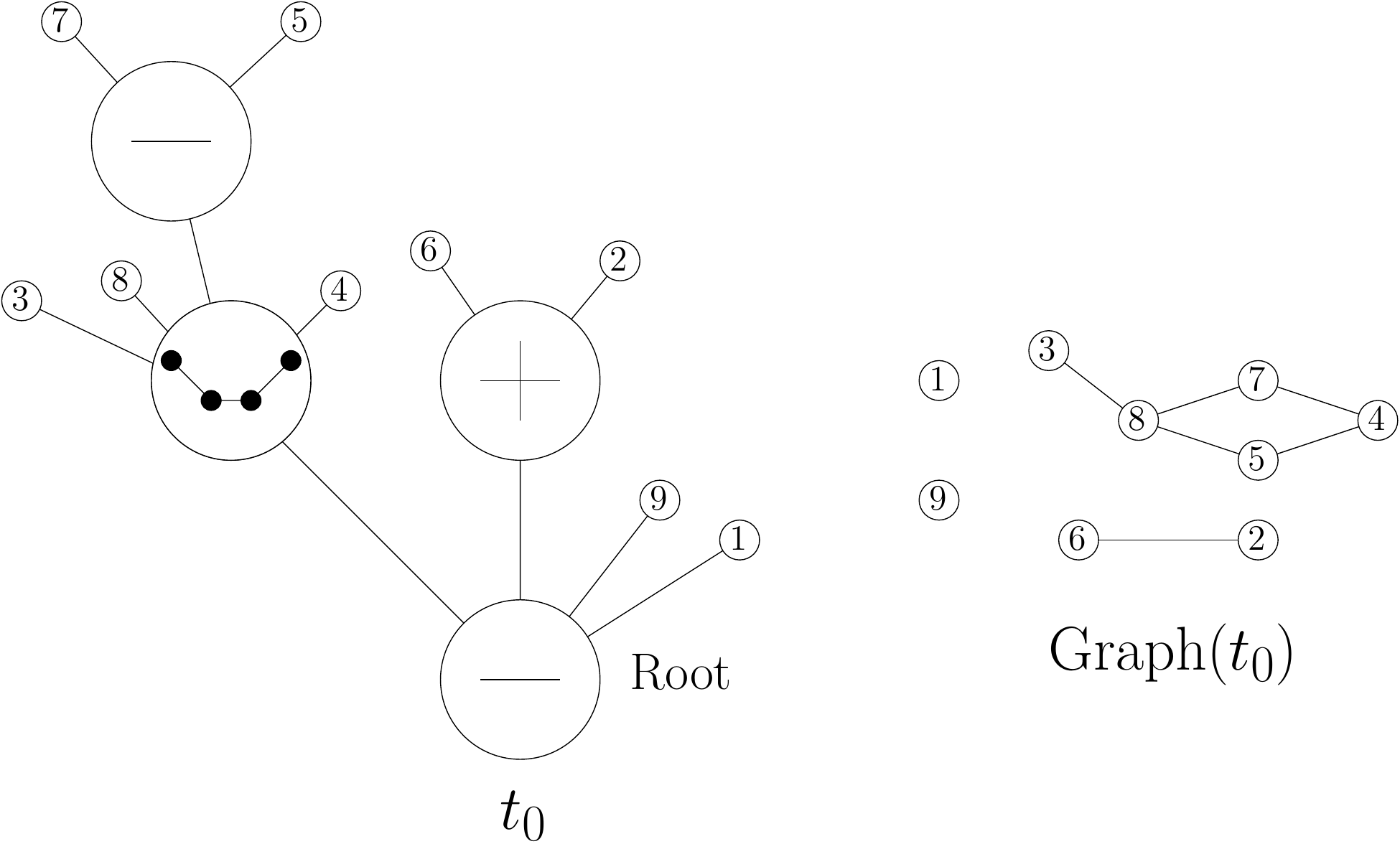}
\caption{A substitution tree $t_0$ and the corresponding graph $\mathrm{Graph}(t_0)$}
\end{center}
\end{figure}

Note that if $t$ is a substitution tree then $\mathrm{Graph}(t)$ is a labeled graph.

The following simple Lemma is essential to the study of the enriched decomposition
of graphs introduced in \Cref{enriched}.
\begin{lem}\label{l1}
Let $t$ be a substitution tree such that the decoration of the root of $t$ (resp.~its complementary) is connected. Then $\mathrm{Graph}(t)$ (resp.~its complementary) is connected. 

\end{lem}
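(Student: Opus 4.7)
The plan is to reason directly on the substitution at the root of $t$. Write $H$ for the decoration of the root and set $G_i := \mathrm{Graph}(t_i)$ for $i \in \{1,\dots,|H|\}$, so that $\mathrm{Graph}(t) = H[G_1,\dots,G_{|H|}]$ by \Cref{d2}. A direct verification from \Cref{2.3} yields the identity
\[
\overline{\mathrm{Graph}(t)} \;=\; \overline{H}\bigl[\overline{G_1},\dots,\overline{G_{|H|}}\bigr],
\]
since the substitution prescribes edges of $\mathrm{Graph}(t)$ entirely in terms of edges of $H$ and of the $G_i$'s. The ``complementary'' clause therefore reduces to the connected clause applied to this rewriting, so it suffices to prove that $\mathrm{Graph}(t)$ is connected whenever $H$ is.

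For that, given two vertices $u, u'$ of $\mathrm{Graph}(t)$ with $u \in V_{G_i}$ and $u' \in V_{G_j}$, I would build a path between them by case analysis. If $i \neq j$, connectedness of $H$ produces a path $v_i = v_{i_0}, v_{i_1}, \dots, v_{i_k} = v_j$ in $H$; choose arbitrary $w_\ell \in V_{G_{i_\ell}}$ for $0 < \ell < k$ and set $w_0 := u$, $w_k := u'$. The second bullet of \Cref{2.3} forces $\{w_{\ell-1}, w_\ell\} \in E$ at every step, so $u, w_1, \dots, w_{k-1}, u'$ is the desired path. If $i = j$, the constraint $|H| \geq 2$ from \Cref{d3} together with connectedness of $H$ furnishes a neighbor $v_j$ of $v_i$ in $H$; for any $w \in V_{G_j}$ the triple $u, w, u'$ is then a path in $\mathrm{Graph}(t)$.

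The argument is entirely elementary; the only potentially delicate point is the same-block case $i = j$, which relies on every vertex of $H$ possessing at least one neighbor in $H$. This is precisely what the constraint $|H| \geq 2$ in \Cref{d3} ensures, regardless of whether the root is linear ($\oplus$ or $\ominus$) or carries a proper graph decoration.
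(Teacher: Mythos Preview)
Your argument is correct and follows essentially the same route as the paper's proof: both exploit that any two vertices lying in different blocks $G_i$, $G_j$ with $\{v_i,v_j\}\in E_H$ are adjacent in $\mathrm{Graph}(t)$, and both handle the same-block case by using that $|H|\geq 2$ together with connectedness of $H$ guarantees each $v_i$ has a neighbor in $H$. The paper picks one representative per block and shows everything attaches to this connected ``hub''; you instead build a path directly between two arbitrary vertices. Your explicit reduction of the complementary clause via $\overline{\mathrm{Graph}(t)} = \overline{H}[\overline{G_1},\dots,\overline{G_{|H|}}]$ is a bit cleaner than the paper's ``both cases are similar''. One wording nit: your final sentence suggests that $|H|\geq 2$ alone forces every vertex of $H$ to have a neighbor, but of course you also need $H$ connected (which you did invoke two sentences earlier).
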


\begin{proof}
Since both cases are similar, we only deal with the case of a connected decoration.
Let $r$ be the root of $t$, $H$ its decoration and $k$ the size of $H$. Let $w_1,\dots, w_k$ be vertices of $\mathrm{Graph}(t)$ such that for each $i\in \{1,\dots, k\}$ there is a leaf labeled $\ell(w_i)$ in the $i$-th tree of $t_r$. Since the unlabeled graph induced by $\{w_i \mid 1\leq i \leq k\}$ is isomorphic to $H$, it is connected. Let $C$ be the connected component of $\mathrm{Graph}(t)$ containing all $w_i$'s. Note that for every vertex $v$ of $\mathrm{Graph}(t)$, there exists $p\in \{1,\dots k\}$ such that the leaf labeled $\ell(v)$ belongs to the $p$-th tree of $t_r$. Since $H$ is connected and of size at least $2$, there exists $q\neq p$ such that the vertices of label $q$ and $p$ are connected by an edge in $H$. Thus $v$ and $w_q$ are connected by an edge in $\mathrm{Graph}(t)$, which means that $v\in C$. This implies that $C=\mathrm{Graph}(t)$, thus $\mathrm{Graph}(t)$ is connected.\end{proof}

\subsection{Modular decomposition}

In this short section we gather the main definitions and properties of modular decomposition. The historical reference is \cite{Gallai}, the interested reader may also look at \cite{graphclasses} or \cite{algo}.

The next definitions and theorems allows to get a unique recursive decomposition of any graph in the sense of \Cref{d2}, the modular decomposition, and to encode it by a tree.

\begin{dfn}
Let $G$ be a graph (labeled or not). A \emph{module} $M$ of $G$ is a subset of $V$ such that for every $(x,y)\in M^2$, and every $z\in V\backslash M$, $\{x,z\}\in E$ if and only if $\{y,z\}\in E$.
\end{dfn}

\begin{rem}
Note that $\emptyset, V$ and $\{v\}$ for $v\in V$ are always modules of $G$. Those sets are called the trivial modules of $G$.
\end{rem}

\begin{dfn}\label{prime}
A graph $G$ is \emph{prime} if it has at least $3$ vertices and its only modules are the trivial ones.
\end{dfn}

\begin{dfn}
A graph is called \emph{$\ominus$-indecomposable} (resp.~\emph{$\oplus$-indecomposable}) if it cannot be written as $\ominus[G_1,\dots,G_k]$ (resp.~$\oplus[G_1,\dots,G_k]$) for some $k\geq 2$ and weakly-labeled graphs $G_1,\dots, G_k$.
\end{dfn}

Note that a graph is $\ominus$-indecomposable if and only if it is connected, and $\oplus$-indecomposable if and only if its complementary is connected.

\begin{thm}[Modular decomposition, \cite{Gallai}]\label{thm1}
Let $G$ be a graph with at least $2$ vertices, there exists a unique partition $\mathcal{M}=\{M_1,\dots, M_k\}$ for some $k\geq 2$ (where the $M_i$'s are ordered by their smallest element), where each $M_i$ is a module of $G$ and such that either
\begin{itemize}
    \item $G=\oplus[M_1,\dots, M_k]$ and the $(M_i)_{1\leq i \leq k}$ are $\oplus$-indecomposable;
    \item $G=\ominus[M_1,\dots, M_k]$ and the $(M_i)_{1\leq i \leq k}$ are $\ominus$-indecomposable;
    \item there exists a unique prime graph $P$ such that $G=P[M_1,\dots, M_k]$.
\end{itemize}

\end{thm}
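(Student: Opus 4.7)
The plan is to decompose the proof into a structural lemma about modules, a case analysis on the connectivity of $G$ and its complement $\overline G$, and a uniqueness argument.

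First I would prove the following standard module-algebra lemma: if $A$ and $B$ are modules of $G$ that \emph{overlap}, meaning $A \cap B$, $A \setminus B$ and $B \setminus A$ are all nonempty, then each of $A \cap B$, $A \cup B$, $A \setminus B$, $B \setminus A$ and $A \triangle B$ is again a module of $G$. This is verified directly by taking, for a vertex $z$ outside the set in question, a witness in $A$ (resp.~$B$) whose adjacency with $z$ is, by the module property, uniform over $A$ (resp.~$B$), and combining these uniform values.

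Then I case-split on connectivity. If $G$ is disconnected, its connected components $C_1, \ldots, C_k$ ($k \geq 2$, ordered by smallest label) are disjoint modules partitioning $V$, each connected hence $\ominus$-indecomposable, and $G = \ominus[C_1, \ldots, C_k]$. Symmetrically, if $\overline G$ is disconnected, the co-components yield $G = \oplus[M_1, \ldots, M_k]$. Assume now both $G$ and $\overline G$ are connected, and let $M_1, \ldots, M_k$ be the maximal proper modules of $G$. Using the overlap lemma, any two maximal proper modules must be disjoint: otherwise their union would be a strictly larger module, and the equality $M_i \cup M_j = V$ is excluded by partitioning $V = (M_i \setminus M_j) \sqcup (M_i \cap M_j) \sqcup (M_j \setminus M_i)$ and checking that all three bipartite adjacencies between the blocks are uniform and equal to a single value $\epsilon \in \{0,1\}$; $\epsilon = 1$ then disconnects $\overline G$ and $\epsilon = 0$ disconnects $G$. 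Every $v \in V$ lies in some maximal proper module (at worst $\{v\}$ itself), so the $M_i$'s partition $V$. The quotient graph $P$ on $\{1, \ldots, k\}$, defined by picking representatives, is well-defined by the module property and satisfies $G = P[M_1, \ldots, M_k]$. We have $k \geq 3$ (otherwise $P$ would be $\oplus$ or $\ominus$, contradicting our connectivity hypotheses), and $P$ is prime because any non-trivial module of $P$ would pull back to a proper module of $G$ strictly containing some $M_i$, contradicting maximality.

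For uniqueness, the two disconnected cases are immediate from the intrinsic characterizations via (co-)components. In the prime case, I would argue that any valid decomposition must have the $M_i$'s equal to the maximal proper modules: given a proper module $N$ of $G$ strictly containing some $M_i$, one first enlarges $N$ to a union of whole $M_j$'s (using that $N \cup M_j$ is a module whenever $M_j \cap N \neq \emptyset$); if the enlargement is still a proper subset of $V$, its index set is a non-trivial module of $P$, contradicting primality; otherwise every $M_j$ meets $N$ properly, and a short adjacency analysis forces $P$ to be complete or empty, again contradicting primality. The main obstacle is the argument inside the prime case ruling out $M_i \cup M_j = V$: this is where both connectivity hypotheses genuinely enter, through the bookkeeping on which bipartite adjacencies between the three blocks are full versus empty. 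The uniqueness analysis involves similar case work; once these adjacency arguments are cleared, the remainder is a mechanical unrolling of the overlap lemma and maximality.
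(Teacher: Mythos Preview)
The paper does not give its own proof of this theorem: it is quoted as a classical result of Gallai and cited without argument (the subsequent material only \emph{uses} the modular decomposition). So there is nothing in the paper to compare against.

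Your outline is the standard proof found in the references the paper points to for modular decomposition (Gallai, and the survey literature such as \cite{graphclasses}): the overlap lemma for modules, the trichotomy on connectivity of $G$ and $\overline G$, and in the doubly connected case the identification of the blocks with the maximal proper modules, together with primality of the quotient by pull-back of modules. The sketch is correct; the only places that deserve a line more of care are (i) the exclusion of $M_i\cup M_j=V$ for two overlapping maximal proper modules, where one should record explicitly that the three bipartite adjacencies between $M_i\setminus M_j$, $M_i\cap M_j$, $M_j\setminus M_i$ are all equal, and (ii) the uniqueness in the prime case, where it is cleanest to use the general fact that any module of $P[M_1,\dots,M_k]$ is either contained in some $M_i$ or is a union $\bigcup_{i\in I}M_i$ for some module $I$ of $P$; with $P$ prime this forces every proper module of $G$ to sit inside a single $M_i$, so the $M_i$ are exactly the maximal proper modules and the decomposition is determined. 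These are routine completions rather than gaps.
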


This decomposition can be used to encode graphs by specific trees to get a one-to-one correspondence.

\begin{dfn}
Let $t$ be a substitution tree. We say that $t$ is a \emph{canonical tree} if its internal nodes are either $\oplus$, $\ominus$ or prime graphs, and if there is no child of a node decorated with $\oplus$ (resp.~$\ominus$) which is decorated with $\oplus$ (resp.~$\ominus$).

\end{dfn}

To a graph $G$ we associate a canonical tree by recursively applying the decomposition of
\Cref{thm1} to the modules $(M_i)_{1\leq i \leq k}$, until they are of size $1$. First of all, at each step, we order the different modules increasingly according to their minimal vertex labels. Doing so, a labeled graph $G$ can be
encoded by a canonical tree. The internal nodes are decorated with the different graphs that are encountered along the recursive decomposition process ($\oplus$ if $G=\oplus[M_1,\dots, M_k]$, $\ominus$ if $G=\ominus[M_1,\dots, M_k]$, $P$ if $G=P[M_1,\dots, M_k]$).\\
At the end, every module of size $1$ is converted into a leaf labeled by the label of the vertex.

This construction provides a one-to-one correspondence
between labeled graphs and canonical trees that maps the size of a graph to the size of the corresponding tree.

\begin{prop}
Let $G$ be a graph, and $t$ its canonical tree, then $t$ is the only canonical tree such that $\mathrm{Graph}(t)=G$.
\end{prop}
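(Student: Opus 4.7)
The plan is to prove uniqueness by induction on $|G|$, using the uniqueness statement of the modular decomposition (\Cref{thm1}) as the engine, with \Cref{l1} ensuring that the children of the root of any canonical tree encoding $G$ are $\oplus$- or $\ominus$-indecomposable in the appropriate sense.

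The base case $|G|=1$ is immediate since the only canonical tree of size $1$ is a single leaf, whose label must equal the label of the unique vertex of $G$. For the inductive step, suppose that $t'$ is a canonical tree with $\mathrm{Graph}(t')=G$ and $|G|\geq 2$. Let $r$ be the root of $t'$, with subtrees $t'_1,\dots,t'_k$ (ordered by minimum leaf label, as per the convention) and let $M'_i$ be the set of vertices of $G$ whose labels occur as leaves of $t'_i$. Then $\{M'_1,\dots,M'_k\}$ is a partition of $V(G)$, and a direct check from \Cref{d2} shows that each $M'_i$ is a module of $G$: two leaves in the same subtree $t'_j$ have identical adjacency with every leaf outside $t'_j$, since the adjacency is entirely determined by the decoration at $r$.

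I would next split into three cases depending on the decoration of $r$. If $r$ is decorated by $\oplus$ (resp.\ $\ominus$) then $G=\oplus[M'_1,\dots,M'_k]$ (resp.\ $\ominus[\dots]$). The canonical-tree condition forbids an $\oplus$-child of an $\oplus$-node (resp.\ $\ominus$-child of an $\ominus$-node). Combined with \Cref{l1} applied to each $t'_i$, this forces each $M'_i$ to be $\oplus$-indecomposable (resp.\ $\ominus$-indecomposable): indeed, if some $M'_i$ decomposed as a join, then either $t'_i$ is a leaf (impossible when $|M'_i|\geq 2$), or the root of $t'_i$ is itself $\oplus$ (forbidden), or the root of $t'_i$ is $\ominus$ or prime, in which case \Cref{l1} applied to the complement would contradict the join decomposition of $M'_i$. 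If $r$ is decorated by a prime graph $P$, then $G=P[M'_1,\dots,M'_k]$ by construction. In every case, by the uniqueness statement of \Cref{thm1}, the decoration of $r$ and the partition $\{M'_1,\dots,M'_k\}$ must coincide with those produced by the modular decomposition of $G$, and hence with the decoration of the root and the partition induced by the subtrees of the canonical tree $t$ built from $G$.

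It remains to argue that each subtree $t'_i$ of $t'$ equals the corresponding subtree $t_i$ of $t$. Since $t'_i$ is itself a canonical tree with $\mathrm{Graph}(t'_i)$ equal to the reduction of the subgraph of $G$ induced by $M'_i$, and since $|M'_i|<|G|$, the induction hypothesis yields $t'_i=t_i$. The ordering convention (children ordered by minimum leaf label) then pins down the global tree, giving $t'=t$. The main obstacle is the case analysis showing that the children of $r$ are $\oplus$- or $\ominus$-indecomposable under the canonical-tree constraints; this is exactly where \Cref{l1} is invoked, and the rest is a careful application of \Cref{thm1}.
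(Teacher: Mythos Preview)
Your proof is correct and is the natural way to make this proposition rigorous. Note that the paper itself does not supply a proof: the proposition is stated as an immediate consequence of the construction of the canonical tree together with the uniqueness clause in \Cref{thm1}. Your induction on $|G|$, using \Cref{l1} to verify that the root partition of any canonical tree encoding $G$ satisfies the hypotheses of the relevant case of \Cref{thm1}, is exactly how one unpacks that ``immediate'' step. One small remark: in the prime case you implicitly use that a prime graph and its complement are both connected (so that \Cref{l1} applies to subtrees with prime root); this is standard (a disconnected graph on $\geq 3$ vertices has a nontrivial module) but is not stated in the paper, so it would be worth a one-line justification.
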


\begin{rem}
It is crucial to consider canonical trees as non-plane: otherwise, since prime graphs can have several labelings, there would be several canonical trees associated with the same graph.
\end{rem}

\subsection{Enriched modular decomposition}\label{enriched}

Unfortunately the modular decomposition alone does not provide usable decompositions for the graph classes that we consider. The aim of this section is to solve this issue: we will state and prove \Cref{2.18} which provides in a very general setting a one-to-one encoding of graphs with substitution trees with constraints.
In \Cref{sec:cla} we will show that $P_4$-reducible graphs, $P_4$-sparse graphs, $P_4$-lite graphs, $P_4$-extendible graphs, $P_4$-tidy graphs fit in the settings of \Cref{2.18}.

\begin{dfn}
We say that $G$ is a \emph{graph with blossoms} if there exists $k\in\{0,\dots,|V|\}$ such that exactly $k$ vertices of $G$ are labeled $*$, and the others ones have a distinct label in $\{1,\dots,|V|-k\}$. 

The vertices labeled $*$ are called the \emph{blossoms} of $G$. Let $B_G$ the set of vertices that are blossoms of $G$ and $N(G):=|V|-|B_G|$ the number of vertices that are not a blossom of $G$.

\end{dfn}

\begin{rem}
In the above definition, we allow $k=0$, then the definition reduces to the one of a labeled graph.
\end{rem}

\begin{dfn}
Let $G$ be a graph with blossoms and $\pi$ be a permutation of $\{1,\dots,N(G)\}$. The \emph{$\pi$-relabeling} of $G$ is the graph $G'$ such that: 
\begin{itemize}
    \item $V_{G'}=V_G$ and $B_{G'}=B_G$;
    \item for every vertex $v$ in $V_{G'}\backslash B_{G'}$, we replace the label of the leaf $v$ by $\pi(\ell(v))$. 
    
\end{itemize}

We write $G\sim G'$ if there exists a permutation $\pi$ of $\{1,\dots, N(G)\}$ such that $G$ is isomorphic to the $\pi$-relabeling of $G'$.

\end{dfn}
Note that $\sim$ is an equivalence relation.

\begin{dfn}
Let $G$ be a graph with blossoms, a permutation $\pi$ of $\{1,\dots, N(G)\}$ is an \emph{automorphism} of $G$ if the $\pi$-relabeling of $G$ is $G$.

\end{dfn}

\begin{dfn}\label{defblo}
A module of a graph with blossoms is called \emph{flowerless} if it does not contain any blossom.\\
Let $G$ be a graph with blossoms and $M$ a non-empty flowerless module of $G$. We define \emph{$\mathsf{blo}_{M}(G)$} to be the labeled graph obtained after the following transformations:
\begin{itemize}

    \item $M$ is replaced by a new vertex $v$, that is now labeled $*$;
    \item for every vertex $w\in G\backslash M$, $\{w,v\}$ is an edge if and only if $\{w,m\}$ is an edge of $G$ for every $m\in M$;
    \item the graph obtained is replaced by its reduction as defined in \Cref{red}.

\end{itemize}

If $G$ is a graph with one blossom and $M$ is a non-empty flowerless module of $G$, we define $\mathsf{blo}_{M,0}(G)$ (resp.~$\mathsf{blo}_{M,1}(G)$) to be the graph $\mathsf{blo}_{M}(G)$ where the label of the initial blossom of $G$ is replaced by $*_0$ (resp.~$*_1$) and the label of the new blossom is replaced by $*_1$ (resp.~$*_0$).
\end{dfn}

\begin{figure}[htbp]
\begin{center}
\centering
\includegraphics[scale=0.6]{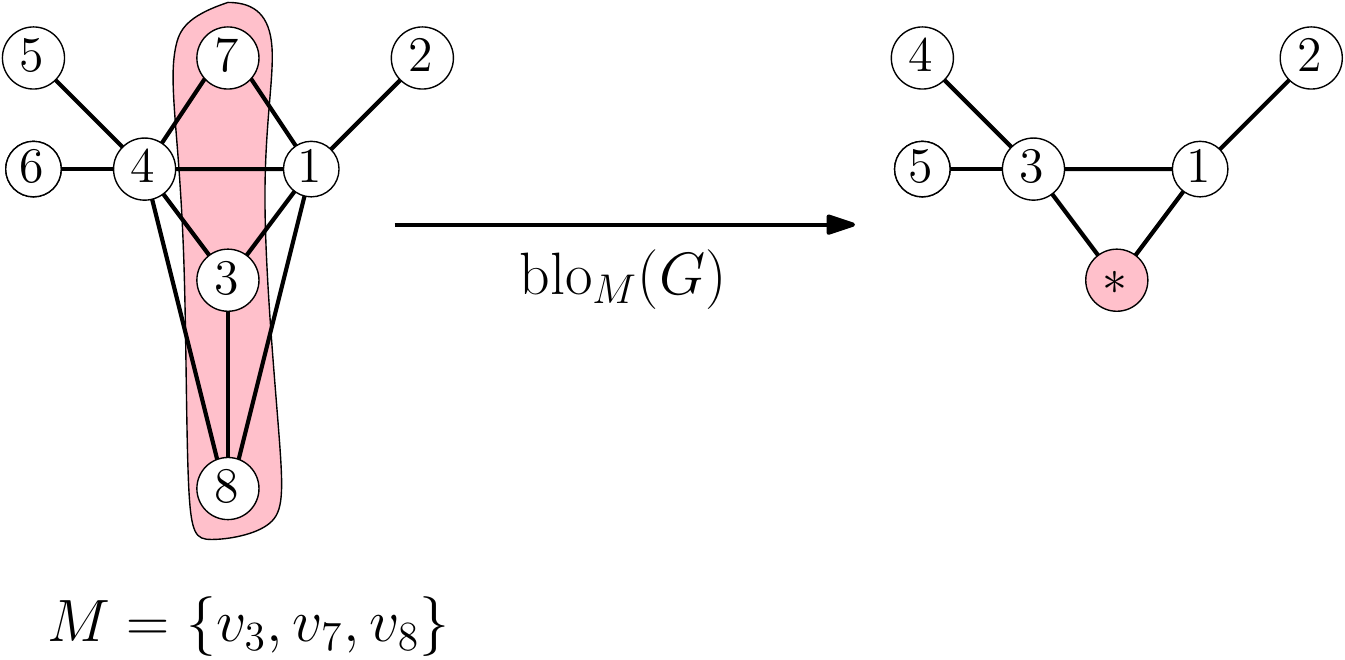}
\caption{Illustration of \Cref{defblo} Left: A graph $G$ in which we have highlighted the module $M=\{v_3,v_7,v_8\}$. Right: The corresponding $\mathsf{blo}_M(G)$.}
\end{center}
\end{figure}

\noindent{}In this paper, we only consider the construction $\mathsf{blo}_M(G)$ for graphs with $0$ or $1$ blossom.\\
We are now ready to precise the general framework of our study. One of the key ingredient is the following recursive definition of families of graphs.

\begin{dfn}\label{d1}
Let $\mathcal{P}$ be a set of graphs with no blossom and $\mathcal{P}^{\bullet}$  be a set of graphs with one blossom. A tree $t\in \mathcal{T}_{0}$ is called \emph{$(\mathcal{P},\mathcal{P}^{\bullet})$-consistent} if one of the following conditions holds:

\begin{enumerate}
\item[(D1)] The tree $t$ is a single leaf.

\item[(D2)] The root $r$ of $t$ is decorated with a graph $H\in \mathcal{P}$ and $t_r$ (the multiset of trees attached to $r$) is a union of leaves.
\item[(D3)] The root $r$ of $t$ is decorated with $\oplus$ (resp.~$\ominus$) and all the elements of $t_r$ are $(\mathcal{P},\mathcal{P}^{\bullet})$-consistent and their roots are not decorated with $\oplus$ (resp.~$\ominus$).

\item[(D4)]  The root $r$ of $t$ is decorated with a graph $H\notin \{\oplus,\ominus\}$ and there exists at least one index $i\in \{1,\dots, |H|\}$ such that the $i$-th tree of $t_r$ is $(\mathcal{P},\mathcal{P}^{\bullet})$-consistent, the remaining trees in $t_r$ are reduced to a single leaf and $\mathsf{blo}_{\{v_i\}}(H)\in \mathcal{P}^{\bullet}$.

\end{enumerate}
We define $\mathcal{T}_{\mathcal{P},\mathcal{P}^{\bullet}}$ to be the set of trees $t$ that are $(\mathcal{P},\mathcal{P}^{\bullet})$-consistent and such that each leaf has a distinct label in $\{1,\dots, |t|\}$.

\end{dfn}

\begin{figure}[htbp]
\begin{center}
\centering
\includegraphics[scale=0.6]{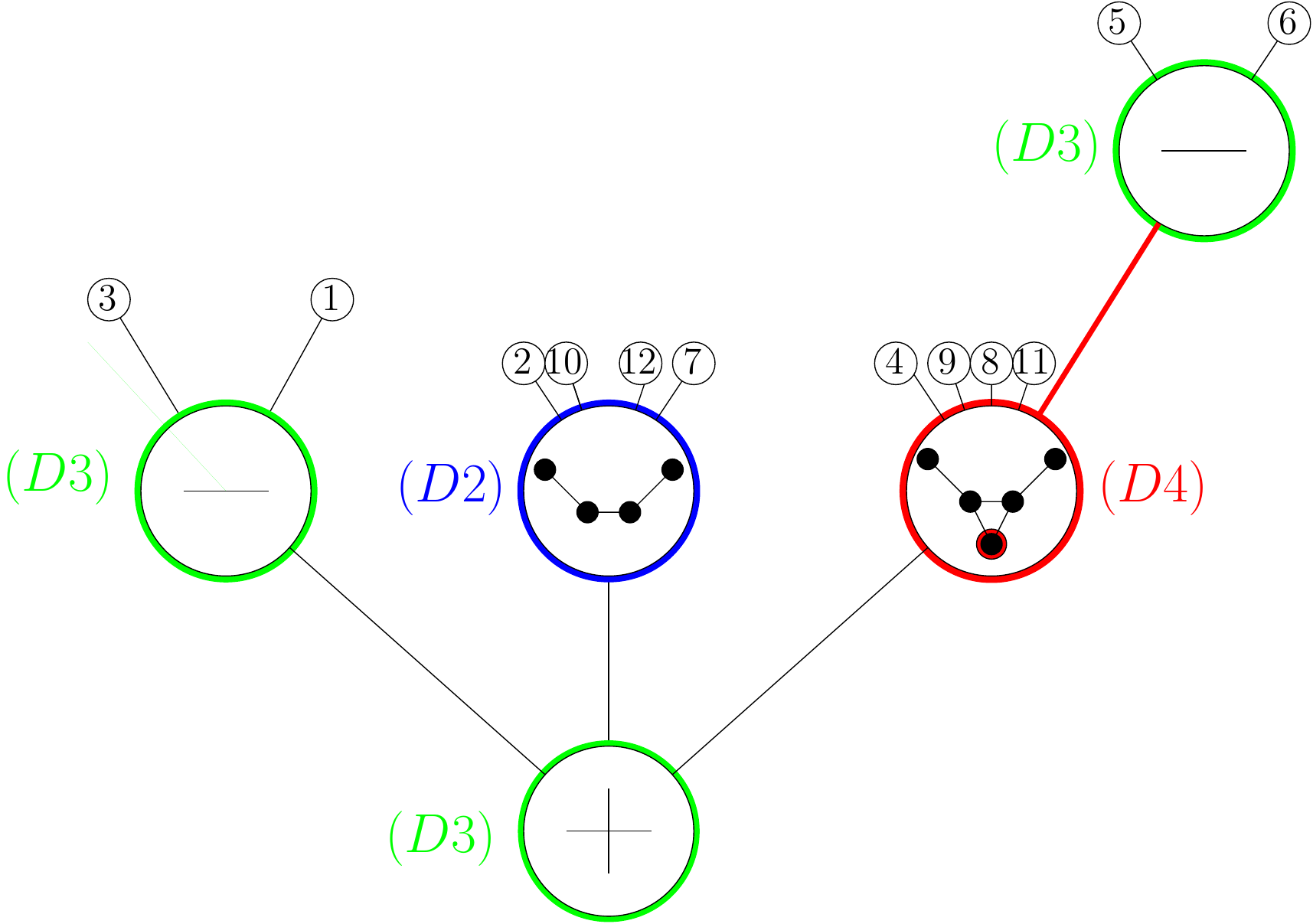}
\caption{An example of tree in some $\mathcal{T}_{\mathcal{P},\mathcal{P}^{\bullet}}$. The different colours illustrate the different cases of \Cref{d1}. The subtree with leaves $\{5,6\}$ on the top-right is attached to the vertex which is circled in red inside the vertex of case $(D4)$. This corresponds to the $i$-th subtree of case $(D4)$}
\end{center}
\end{figure}

A graph $G$ is called $(\mathcal{P},\mathcal{P}^{\bullet})$-consistent if there exists a $(\mathcal{P},\mathcal{P}^{\bullet})$-consistent tree $t$ such that $G=\mathrm{Graph}(t)$. We let $\mathcal{G}_{\mathcal{P},\mathcal{P}^{\bullet}}$ be the set of $\mathrm{Graph}(t)$ for  $t\in \mathcal{T}_{\mathcal{P},\mathcal{P}^{\bullet}}$.

The map $ t\mapsto \mathrm{Graph}(t)$ from $\mathcal{T}_{\mathcal{P},\mathcal{P}^{\bullet}}$ to $\mathcal{G}_{\mathcal{P},\mathcal{P}^{\bullet}}$ is surjective, but without conditions on $(\mathcal{P},\mathcal{P}^{\bullet})$ this map is not one-to-one.
To solve this issue, we introduce the following additional constraints on the set $\mathcal{P},\mathcal{P}^{\bullet}$:

\begin{cond}\label{cond1}\ 

\begin{enumerate}

    \item[(C1)] $\mathcal{P}$ and $\mathcal{P}^{\bullet}$ do not contain a graph of size $1$.
    \item[(C2)] For every $F\in \mathcal{P}$ and every module $M$ of $F$, either  $\mathsf{blo}_M(F)\not\in \mathcal{P}^{\bullet}$ or the subgraph of $F$ induced by $M$ is not $(\mathcal{P},\mathcal{P}^{\bullet})$-consistent.
    \item[(C3)] For every $F$ and $F'$ in $\mathcal{P}^{\bullet}$, and every flowerless modules $M$ and $M'$ of respectively $F$ and $F'$ 
    one of the following conditions is verified:

    \begin{itemize}
        \item $\mathsf{blo}_{M,0}(F)\neq \mathsf{blo}_{M',1}(F')$
        \item  The subgraph of $F$ induced by $M$  is not $(\mathcal{P},\mathcal{P}^{\bullet})$-consistent.
        \item The subgraph of $F'$ induced by $M'$  is not $(\mathcal{P},\mathcal{P}^{\bullet})$-consistent.
    \end{itemize}
    \item[(C4)] Every element of $\mathcal{P}$ and $\mathcal{P}^{\bullet}$ is $\oplus$-indecomposable and $\ominus$-indecomposable.
    \item [(C5)] For every $G\in \mathcal{P}^{\bullet}$, the only modules of $G$ containing the blossom are $\{*\}$ and $G$.

\end{enumerate}

We say that $(\mathcal{P},\mathcal{P}^{\bullet})$ verifies condition $(C)$ if $(C1)-(C5)$ hold.

\end{cond}

\begin{rem}
The last two constraints are not necessary to ensure that the map is bijective. However, giving necessary and sufficient conditions to have unicity that can be checked easily is quite complicated.

Note that if condition $(C)$ is satisfied for a pair of sets $(\mathcal{P},\mathcal{P}^{\bullet})$ and $\mathcal{Q}\subset \mathcal{P}$ and $\mathcal{Q}^{\bullet}\subset \mathcal{P}^{\bullet}$, it is also verified by $(\mathcal{Q},\mathcal{Q}^{\bullet})$.

\end{rem}

\begin{prop}\label{2.18}
Let $\mathcal{P}$ be a set of graphs with no blossom and $\mathcal{P}^{\bullet}$ a set of graphs with one blossom. Assume that $(\mathcal{P},\mathcal{P}^{\bullet})$ verifies condition $(C)$.
For any $G\in \mathcal{G}_{\mathcal{P},\mathcal{P}^{\bullet}}$, there exists a unique $t\in \mathcal{T}_{\mathcal{P},\mathcal{P}^{\bullet}}$ such that $G=\mathrm{Graph}(t)$. Moreover, for any element of $\mathcal{T}_{\mathcal{P},\mathcal{P}^{\bullet}}$ satisfying case $(D4)$ in \Cref{d1}, the index $i$ such that case $(D4)$ holds is unique.
\end{prop}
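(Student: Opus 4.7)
The plan is to proceed by strong induction on $|G|$. For $|G|=1$, only case (D1) can apply, since (D2)--(D4) each produce at least two leaves beneath the root, so $t$ is forced to be a single leaf.

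For the inductive step, take $t,t'\in\mathcal{T}_{\mathcal{P},\mathcal{P}^{\bullet}}$ both encoding $G$ of size $n\geq 2$. The first step is to identify the type of the root from intrinsic properties of $G$: by \Cref{l1} combined with condition (C4), every decoration coming from $\mathcal{P}\cup\mathcal{P}^{\bullet}$ is $\oplus$- and $\ominus$-indecomposable and therefore produces a $\mathrm{Graph}(t)$ that is connected with connected complement. Hence the case of the root is determined by $G$: case (D3) with $\ominus$ iff $G$ is disconnected, case (D3) with $\oplus$ iff its complement is disconnected, and case (D2)/(D4) otherwise. In the first two situations, the children's Graphs are exactly the connected components of $G$ (respectively of its complement), which form a unique ordered partition via the min-label convention; the (D3) prohibition on nested $\oplus$'s or $\ominus$'s matches the $\ominus$- (resp.\ $\oplus$-) indecomposability of these components automatically, and induction on each subtree gives $t=t'$.

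The heart of the argument is the (D2)/(D4) case. Since $G$ is $\oplus$- and $\ominus$-indecomposable, \Cref{thm1} supplies a unique prime $P$ and unique modules $N_1,\dots,N_k$ with $G=P[N_1,\dots,N_k]$. For any tree in case (D4), the vertices of the unique non-leaf subtree (attached at position $i$) form a module $M$ of $G$, and one checks directly that $\mathsf{blo}_M(G)\sim\mathsf{blo}_{\{v_i\}}(H)\in\mathcal{P}^{\bullet}$. Condition (C2) rules out a mixed (D2)/(D4) situation: if $t$ were in (D2) with root $H\in\mathcal{P}$ while $t'$ were in (D4) supplying a module $M$ of $H$ whose induced subgraph is consistent and whose quotient lies in $\mathcal{P}^{\bullet}$, this directly contradicts (C2). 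When both $t,t'$ are in (D4), condition (C3) applied with $F=\mathsf{blo}_{\{v_i\}}(H)$, $F'=\mathsf{blo}_{\{v_{i'}\}}(H')$ and the modules read off the two quotients produces a forbidden identity $\mathsf{blo}_{M,0}(F)=\mathsf{blo}_{M',1}(F')$ with both induced subgraphs consistent, forcing $(H,i)=(H',i')$ and $M=M'$. The uniqueness of $i$ within a single (D4) node follows from the same (C3) argument applied to two candidate singleton blossom-positions in the same decoration $H$, and condition (C5) is used to ensure that the blossom's position inside any element of $\mathcal{P}^{\bullet}$ is not disguised by a larger module structure.

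Once the root decoration, the index $i$ and the module $M$ are forced to agree between $t$ and $t'$, the unique non-leaf subtree encodes the induced subgraph $G[M]$ of strictly smaller size, and the inductive hypothesis closes the argument. The main technical obstacle is the bookkeeping around $\mathsf{blo}$, the $\sim$-equivalence and the two-blossom construction in (C3): one must carefully line up relabelings so that the quotients of $G$ read off two different trees really produce the precise two-blossom equality required to invoke (C3), and similarly verify that the quotient matches the blossomed root decoration up to the right $\sim$-relabeling. Once that alignment is set up, conditions (C1)--(C5) interlock cleanly to yield both the uniqueness of $t$ and the uniqueness of the index $i$ in case (D4).
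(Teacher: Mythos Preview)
Your overall architecture matches the paper's: reduce to a minimal counterexample (equivalently, induct on $|G|$), use \Cref{l1} and (C4) to pin down whether the root is in case (D3), and then separate (D2)/(D4). The treatment of (D3) and of the mixed (D2)/(D4) situation via (C2) is fine.

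The gap is in the (D4)/(D4) case. You write that (C3), applied with $F=\mathsf{blo}_{\{v_i\}}(H)$ and $F'=\mathsf{blo}_{\{v_{i'}\}}(H')$ and ``the modules read off the two quotients'', yields the forbidden identity $\mathsf{blo}_{M,0}(F)=\mathsf{blo}_{M',1}(F')$. But (C3) needs $M'$ to be a \emph{flowerless} module of $F=\mathsf{blo}_M(G)$ and $M$ to be a flowerless module of $F'=\mathsf{blo}_{M'}(G)$. This is only automatic when $M\cap M'=\emptyset$; if $M$ and $M'$ overlap, part of $M'$ sits inside the blossom of $F$ and $M'$ is not even a subset of $V_F$, so you cannot form $\mathsf{blo}_{M',0}(F)$ at all. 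The paper handles this by an explicit case split: the disjoint case uses (C3) exactly as you describe, while the overlapping case (say $M'\not\subset M$, $M\cap M'\neq\emptyset$) uses (C5) to force the module of $\mathsf{blo}_{\{v_i\}}(H)$ generated by $M'$ to be all of $H$, and then shows this makes $H$ $\oplus$- or $\ominus$-decomposable, contradicting the earlier reduction. Your sentence that ``(C5) is used to ensure that the blossom's position \dots\ is not disguised by a larger module structure'' gestures at this but is not an argument; the actual obstruction is the disjoint/overlapping dichotomy, not relabeling bookkeeping.

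Two minor remarks: your invocation of \Cref{thm1} to produce a prime $P$ and modules $N_j$ is never used afterwards and can be dropped; and your claim that uniqueness of $i$ in a single (D4) node follows from (C3) on two singleton positions is correct, precisely because in that situation all subtrees are leaves and the two candidate modules are automatically disjoint.
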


\begin{proof}

Existence is guaranted by definition of $\mathcal{G}_{\mathcal{P},\mathcal{P}^{\bullet}}$.

We proceed by contradiction to prove the uniqueness of $t$. Let $t$ be a smallest tree in $ \mathcal{T}_{\mathcal{P},\mathcal{P}^{\bullet}}$ such that there exists another $t'$ in $\mathcal{T}_{\mathcal{P},\mathcal{P}^{\bullet}}$ verifying $\mathrm{Graph}(t)=\mathrm{Graph}(t')$. Let $G=\mathrm{Graph}(t)$.

The graph $G$ cannot be reduced to a single vertex due to $(C1)$, otherwise $t$ and $t'$ would be a single leaf with label $1$. Thus we can assume that $t$ and $t'$ are not in case $(D1)$.

By \Cref{l1} and $(C4)$, $G$ is $\oplus$-indecomposable (resp.~$\ominus$-indecomposable) if and only if $t$ is not in case $(D3)$ with a root decorated with $\oplus$ (resp.~$\ominus$). Thus either $t$ and $t'$ are both in case $(D3)$ and their roots are both decorated $\oplus$ or $\ominus$, or they are both in case $(D2)$ or $(D4)$.

\noindent{\bf Case (i): $t,t'$ are both in case $(D3)$ and their are both decorated $\oplus$ or $\ominus$.}

Let $r$ and $r'$ be the roots of respectively $t$ and $t'$. Assume that both decorations are $\ominus$, the other case is similar. The elements of $t_r$ induce connected graphs by \Cref{l1} as their roots are either decorated with $\oplus$, or $\ominus$-indecomposable by $(C4)$. Since the roots of $t$ and $t'$ are decorated with $\ominus$, we have a one-to-one correspondence between trees of $t_r$ and connected components of $G$. The same is true for $t'_{r'}$. Assume that two trees corresponding to the same connected component of $G$ are different. Since their set of labels are the same (they correspond to the labels of the vertices in the connected component) after reduction, one would obtain two trees $t_1,t_2$ that are different, $(\mathcal{P},\mathcal{P}^{\bullet})$-consistent and such that $\mathrm{Graph}(t_1)=\mathrm{Graph}(t_2)$ since both are equal to the reduction of the corresponding connected component of $G$. This contradicts the minimality of $t$.
Therefore $t_r=t'_{r'}$ and $t=t'$.

\noindent{\bf Case (ii): $t,t'$ are both in case $(D2)$.}

The graph $G$ is just the decoration of both roots of $t$ and $t'$ so $t=t'$.

\noindent{\bf Case (iii): $t$ is in case $(D4)$, $t'$ is in case $(D2)$.}

Since $t'$ is in case $(D2)$, $G$ is just the decoration of the root of $t'$ thus $G\in\mathcal{P}$. Let $r$ be the root of $t$ and $H$ its decoration. Let $i$ be one of the elements of $\{1,\dots |V_H|\} $ such that $(D4)$ holds for $t,H$ and $i$. Let $M$ be the set of vertices of $G$ whose labels are labels of leaves that belong to the $i$-th tree of $t_{r}$: $M$ is a module of $G$. Then $\mathsf{blo}_M(G)$ is equal to $\mathsf{blo}_{\{v_i\}}(H)$ and thus belongs to $\mathcal{P}^{\bullet}$. Moreover the subgraph of $G$ induced by $M$ is $(\mathcal{P},\mathcal{P}^{\bullet})$-consistent as the $i$-th subtree of $t$ is also $(\mathcal{P},\mathcal{P}^{\bullet})$-consistent. This contradicts $(C2)$.

\noindent{\bf Case (iv): $t,t'$ are both in case $(D4)$.}

 Let $r$ and $r'$ be the roots of respectively $t$ and $t'$ and $H$ and $H'$ be their decorations. Let $i$ be an element of $\{1,\dots, |V_H|\}$ such that $(D4)$ is true for $t, H$ and $i$, and $i'$ be an element of $\{1,\dots, |V_{H'}|\}$ such that $(D4)$ is true for $t', H'$ and $i'$. Consider $M$ (resp.~$M'$) the set of vertices of $G$ whose labels are labels of leaves that belong to the $i$-th tree of $t_r$ (resp.~$i'$-th tree of $t'_{r'}$): $M$ (resp.~$M'$) is a module of $G$. Since the $i$-th tree of $t_r$ (resp.~the $i'$-th tree of $t'_{r'}$) is $(\mathcal{P},\mathcal{P}^{\bullet})$-consistent the subgraph of $G$ induced by $M$ (resp.~$M'$) is $(\mathcal{P},\mathcal{P}^{\bullet})$-consistent. 

We now prove by contradiction that $M=M'$. By symmetry we can assume that $M'\not \subset M$.

First assume that $M\cap M'=\emptyset$. Note that $\mathsf{blo}_{M,1}(\mathsf{blo}_{M'}(G))=\mathsf{blo}_{M',0}(\mathsf{blo}_{M}(G))$. Since $\mathsf{blo}_M(G)=\mathsf{blo}_{\{v_i\}}(H)$ and $\mathsf{blo}_{M'}(G)=\mathsf{blo}_{\{v_{i'}\}}(H')$, we get that $\mathsf{blo}_{M',0}(\mathsf{blo}_{\{v_i\}}(H))=\mathsf{blo}_{M,1}(\mathsf{blo}_{\{v_{i'}\}}(H'))$ which contradicts $(C3)$ as both subgraphs of $G$ induced by $M$ and $M'$ are $(\mathcal{P},\mathcal{P}^{\bullet})$-consistent.

Now assume that $M\cap M'\neq \emptyset$. Let $L$ be the subset of $V_H$ such that $v\in L$ if and only if the $\ell(v)$-th tree of $t_r$ contains a leaf labeled with the label of an element of $M'$. Since $M'$ is a module of $G$ and $M\cap M'\neq \emptyset$, $L$ is a module of $\mathsf{blo}_{\{v_i\}}(H)$ containing the blossom. Since $M'$ is not included in $M$, by $(C5)$, $L=H$. Since $M'\neq G$, there exists a vertex $w$ in $G$ such that $w\not \in M'$. Let $w'$ be the vertex of $H$ such that $w$ is in the $\ell(w')$-th tree of $t_{r}$. Since $M'$ is a module, every vertex of $M'$ is either connected or not to $w$, thus $w'$ is connected to every vertex of $H$ (except $w'$) or to none of them. This means that $H$ is either $\oplus$-decomposable or $\ominus$-decomposable, which is a contradiction.

Thus $M=M'$ and $\mathsf{blo}_{\{v_i\}}(H)=\mathsf{blo}_M(G)=\mathsf{blo}_{M'}(G)=\mathsf{blo}_{\{v_{i'}\}}(H')$, and we get that $H=H'$, and that $i=i'$: thus $i$ is unique.

We know that the $i$-th tree of $t_r$ and the $i$-th tree of $t'_{r'}$ are $(\mathcal{P},\mathcal{P}^{\bullet})$-consistent and the associated graph is the one induced by $M$. By taking the reduction of the trees and the graph, we get by minimality of $t$ that the reductions of both trees are equal. Since $M=M'$, it implies that both subtrees are the same: thus $t=t'$.\end{proof}

\section{Zoology of graph classes with few $P_4$'s}\label{sec:cla}

Several classes have been defined as generalizations of the class of $P_4$-free graphs, the cographs. Here the classes we will focus on are the following: $P_4$-reducible graphs \cite{reduc, reductible}, $P_4$-sparse graphs \cite{sparse, jamison} $P_4$-lite graphs \cite{lite}, $P_4$-extendible graphs \cite{extendible}, $P_4$-tidy graphs \cite{tidy}.\medskip

The aim of this section is to give explicit sets $\mathcal{P}$ and $\mathcal{P}^{\bullet}$ such that $\mathcal{G}_{\mathcal{P},\mathcal{P}^{\bullet}}$ is one of the previously mentioned classes.

\subsection{Basic definitions}
The following results and definitions are from \cite[Section 11.3]{graphclasses}.

\begin{dfn}
A graph $G$ is a \emph{$P_{k}$} if it is a path of $k$ vertices, and a $C_{k}$ if it is a cycle of $k$ vertices.

The two vertices of degree one of a $P_4$ are called the endpoints, the two vertices of degree two are called the midpoints.
\end{dfn}

\begin{nota}
For a graph $G$, we denote by $\overline{G}$ its complementary.
\end{nota}

The modular decompositions of classes of graphs we consider are already well-known \cite{tidy}. To explain the different properties, we need the notion of spider and bull.

\begin{dfn}\label{spider}
\emph{A spider} is a graph $G$, such that there exists a partition of $V_G$ in three parts, $K,S,R$, verifying: 
\begin{itemize}
    \item $|K|\geq 2$;
    \item $K$ induces a clique;
    \item $S$ induces a graph without edges;
    \item every element of $R$ is connected to every element of $K$ but to none of $S$;
    \item there exists a bijection $f$ from $K$ to $S$ such that for every $k\in K$, $k$ is only connected to $f(k)$ in $S$, or such that for every $k\in K$, $k$ is connected to every element of $S$ except $f(k)$. In the first case the spider is called \emph{thin}, in the second one it is called \emph{fat}.
\end{itemize}

\end{dfn}

\begin{figure}[htbp]
\begin{center}
\centering
\includegraphics[scale=0.6]{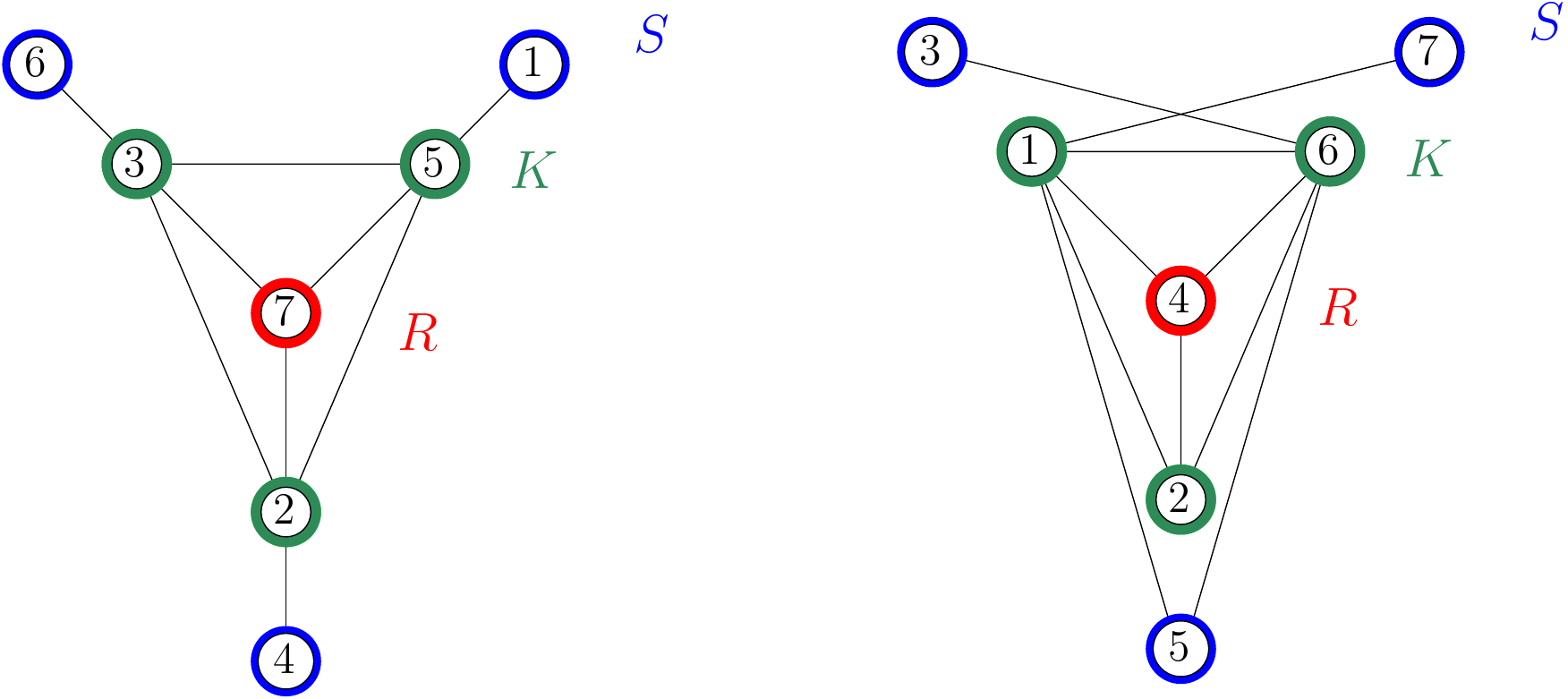}
\caption{Left: a thin spider. Right: a fat spider. Both with $|K|=3$.}
\end{center}
\end{figure}

\begin{rem}
For every spider $G$, the partition $(K,S,R)$ is uniquely determined by $G$. Moreover, the bijection $f$ given by the definition is unique, except in the case $|K|=2$. In this case, since there is no difference between a thin and a fat spider, a spider with $|K|=2$ is called \emph{thin}. A spider with $|K|=2$ and $|R|=1$ is called a \emph{bull}, and a spider with $|K|=2$ and $|R|=0$ is simply a \emph{$P_4$}.
\end{rem}

\begin{figure}[htbp]
\begin{center}
\centering
\includegraphics[scale=0.6]{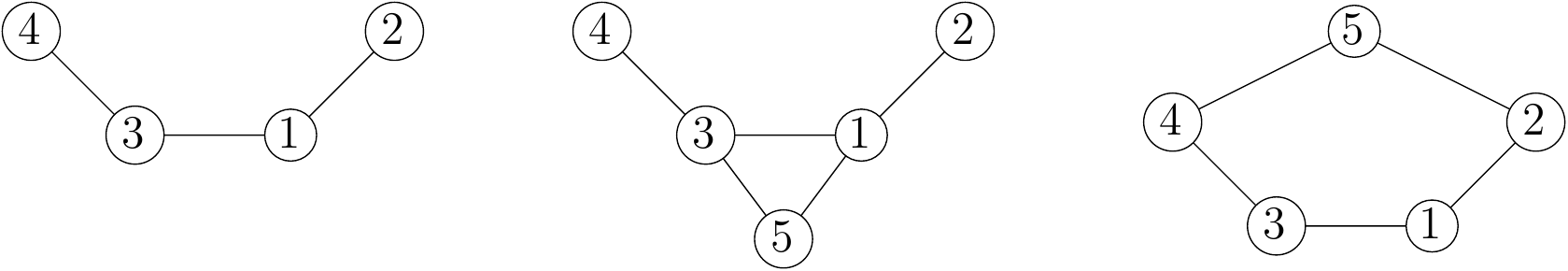}
\caption{From left to right: a $P_4$, a bull, a $C_5$}\label{bull}
\end{center}
\end{figure}

\begin{prop}
A spider is prime if and only if $|R|\leq 1$.
\end{prop}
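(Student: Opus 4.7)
The plan is to handle the two directions of the equivalence separately. For the ``only if'' direction, namely that a prime spider must satisfy $|R| \leq 1$, I argue the contrapositive: if $|R| \geq 2$, then $R$ itself is a nontrivial module of $G$. Indeed, by the definition of a spider every vertex of $R$ has the same adjacency pattern to vertices of $V \setminus R = K \cup S$ (adjacent to every element of $K$ and to none of $S$), so $R$ is a module. Since $|R| \geq 2$ and $R \neq V$ (as $|K| \geq 2$ forces $K \not\subset R$), the module $R$ is nontrivial, so $G$ is not prime.

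For the converse, assume $|R| \leq 1$. First, $|V| = 2|K| + |R| \geq 4$, so the size requirement for primality holds. I take an arbitrary module $M$ of $G$ with $|M| \geq 2$ and aim to show $M = V$. The proof hinges on the following rigidity property of the bijection $f \colon K \to S$: for any two distinct $k_1, k_2 \in K$, the vertex $f(k_1) \in S$ is adjacent to exactly one of $k_1, k_2$ (namely $k_1$ in the thin case and $k_2$ in the fat case); symmetrically every pair in $S$ is distinguished by a vertex in $K$.

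The main case is $|M \cap K| \geq 2$ (the case $|M \cap S| \geq 2$ being symmetric and reducing to it). The rigidity property forces $f(k_1), f(k_2) \in M$ for any two $k_1, k_2 \in M \cap K$; then for every other $k \in K$, comparing its adjacency to $k_1 \in M$ (adjacent, by the clique condition) with its adjacency to $f(k_1) \in M$ (opposite, by the thin/fat dichotomy) forces $k \in M$, so $K \subseteq M$. Once $K \subseteq M$, each $s \in S$ is distinguished from its counterpart $f^{-1}(s)$ by the adjacency to any other element of $K$ (such an element exists because $|K| \geq 2$), forcing $S \subseteq M$. Finally, if $|R| = 1$, its unique vertex has nonuniform adjacency to $M \supseteq K \cup S$ (adjacent to all of $K$ and to none of $S$), so it also lies in $M$, hence $M = V$.

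The remaining case, where $|M \cap K| \leq 1$ and $|M \cap S| \leq 1$, forces $|M| \leq 3$ since $|M \cap R| \leq 1$, leaving only the configurations $\{k,s\}$, $\{k,r\}$, $\{s,r\}$, or $\{k,s,r\}$. For each I would exhibit a vertex in $V \setminus M$ with nonuniform adjacency to $M$, typically an appropriate vertex of $S$ or a second element of $K$, chosen using $f$ and the thin/fat type. The main obstacle I anticipate is precisely this small-case analysis when $|K| = 2$ (corresponding to the $P_4$ and the bull), since we cannot rely on having a third element of $K$ as a distinguisher; instead we must exploit the fact that $|S| \geq 2$ or the presence of $r$. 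Each sub-case can nonetheless be verified directly, completing the proof.
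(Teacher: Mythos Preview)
The paper states this proposition without proof, treating it as a standard fact about spiders; so there is no argument in the paper to compare against, and your proposal supplies what the paper omits.

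Your plan is correct in outline, but there is one genuine slip in the main case. After you place $k_1,k_2,f(k_1),f(k_2)$ in $M$, you claim that any further $k\in K$ has adjacency to $f(k_1)$ \emph{opposite} to its adjacency to $k_1$. This is true for thin spiders (there $k$ is adjacent to $k_1$ but not to $f(k_1)$), but it fails for fat spiders: in a fat spider every $k\in K$ is adjacent to all of $S\setminus\{f(k)\}$, so $k$ is adjacent to \emph{both} $k_1$ and $f(k_1)$, and indeed to all four of $k_1,k_2,f(k_1),f(k_2)$. Hence nothing yet forces $k\in M$.

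The repair is easy and keeps your structure. In the fat case, absorb $S$ first: for each $k\notin\{k_1,k_2\}$ the vertex $f(k)$ is adjacent to $k_1\in M$ (since $k\neq k_1$) but not to $f(k_1)\in M$ (since $S$ is independent), so $f(k)\in M$ and hence $S\subseteq M$. Then absorb $K$: each remaining $k$ is adjacent to $f(k_1)\in M$ but not to $f(k)\in M$, forcing $k\in M$. With this adjustment the rest of your argument (the symmetric case $|M\cap S|\geq 2$ via complementation, the small-case analysis for $|M|\leq 3$, and the inclusion of $R$) goes through exactly as you outline.
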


In the following, if $|R|=1$, the vertex belonging to $R$ will be a blossom of the spider, and it will be its only blossom: such spiders will be called \emph{blossomed spiders}. If $|R|=0$, the spider will have no blossom. This also applies for bulls and $P_4$.

\begin{dfn}\label{pseudo}
We call a graph $H$ a \emph{pseudo-spider} if there exists a prime spider $G$ such that, if we duplicate a vertex that is not a blossom of $G$ (his label is the new number of vertices), and if either by adding or not an edge between the vertex and its duplicate, the graph obtained is a relabeling of $H$. If $|K|=2$, we also call $H$ a \emph{pseudo-$P_4$}.

Moreover, we say that $H$ is a \emph{blossomed pseudo-spider} if $G$ is a blossomed spider. If $|K|=2$, we also call $H$ a \emph{pseudo-bull}.
\end{dfn}

\begin{figure}[htbp]
\begin{center}
\centering
\includegraphics[scale=0.6]{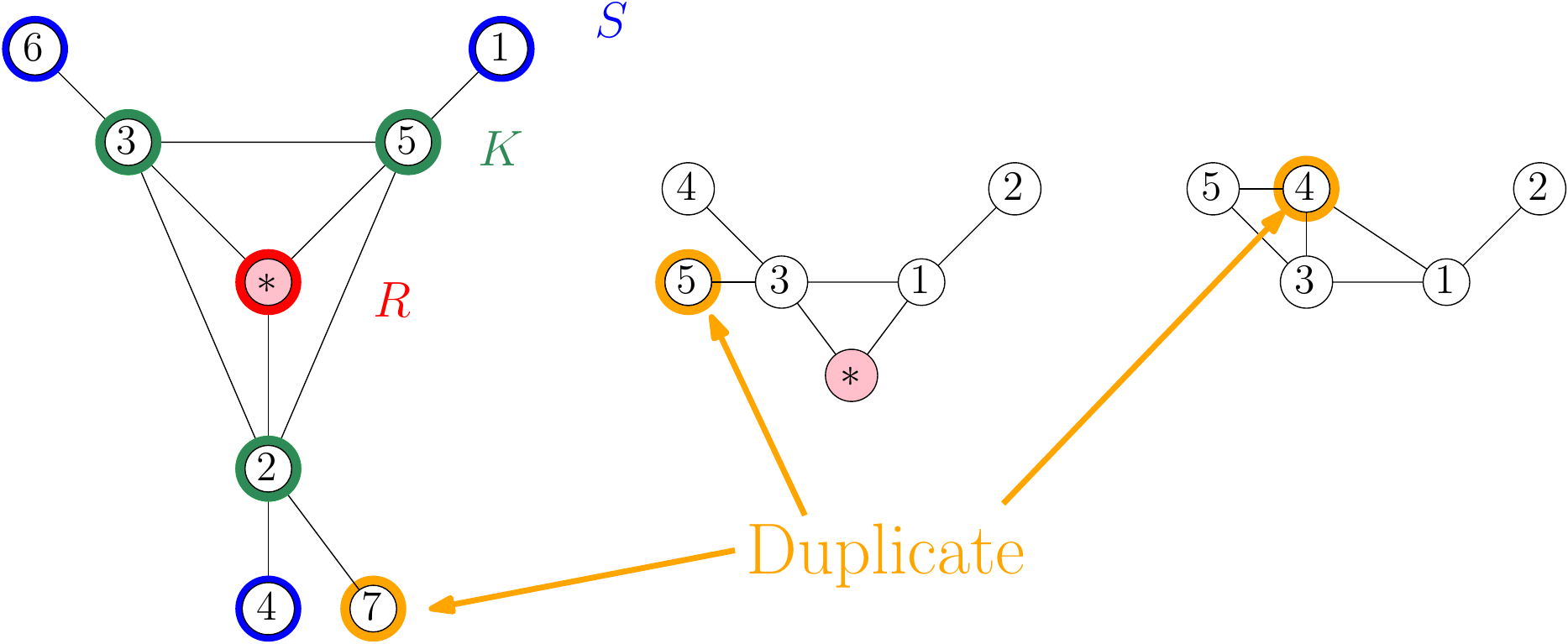}
\caption{A blossomed pseudo-spider, a pseudo-bull, a pseudo $P_4$}
\end{center}
\end{figure}

\begin{lem}
A prime spider with $0$ or $1$ blossom has $|K|!$ automorphisms (as there is a natural bijection between the automorphisms of the spider and the automorphisms of $K$).

A pseudo-spider with $0$ or $1$ blossom has $2\times (|K|-1)!$ automorphisms.

\end{lem}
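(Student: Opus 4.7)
The plan is to prove the two counts in sequence, using the notation $(K,S,R)$ and $f\colon K\to S$ from \Cref{spider}.

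For a prime spider $G$, first I would show that every automorphism sends $K$ to $K$ and $S$ to $S$. This follows from a direct degree computation: in both the thin and the fat case, every vertex of $K$ has strictly larger degree than every vertex of $S$ (using $|K|\geq 2$), and the vertex in $R$, if present, is automatically fixed as the unique blossom. Once a permutation $\sigma$ of the labels on $K$ is chosen, the edges between $K$ and $S$ force the induced action on $S$ to be $f\circ \sigma\circ f^{-1}$, and conversely every $\sigma$ yields a valid automorphism of $G$ in this way. This sets up a bijection $\mathrm{Aut}(G)\leftrightarrow S_{|K|}$ and hence $|\mathrm{Aut}(G)|=|K|!$.

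Now let $H$ be a pseudo-spider, obtained from a prime spider $G$ by duplicating a non-blossom vertex $v$ into a twin $v'$ (optionally with an edge $vv'$). By construction $\{v,v'\}$ is a twin pair in $H$, and the key step of this case is to prove that it is the only one. I would treat candidate twin pairs by cases: two vertices of $V(G)\setminus\{v\}$; a vertex of $V(G)\setminus\{v\}$ together with $v$; and a vertex of $V(G)\setminus\{v\}$ together with $v'$. In each case, writing out the twin condition in $H$ and substituting the explicit neighbourhood of $v'$ reduces to a twin condition in $G$, that is, to a non-trivial module of size $2$ in $G$, which contradicts primality.

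Granting this uniqueness, every automorphism of $H$ stabilises $\{v,v'\}$ setwise. The transposition $\tau$ of $v$ and $v'$ is itself an automorphism of $H$ (immediate from the twin property), so every automorphism of $H$ factors as $\tau^{\varepsilon}\circ \pi'$ with $\varepsilon\in\{0,1\}$ and $\pi'$ fixing $\{v,v'\}$ pointwise. The restriction of $\pi'$ to $V(G)$ is then an automorphism of $G$ fixing $v$, and conversely every such automorphism of $G$ lifts uniquely to an automorphism of $H$ fixing $\{v,v'\}$ pointwise. This identifies $\mathrm{Aut}(H)$ with the product $S_{\{v,v'\}}\times \mathrm{Stab}_{\mathrm{Aut}(G)}(v)$. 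By the first part $\mathrm{Aut}(G)\cong S_{|K|}$ acts transitively on both $K$ and $S$, so the stabiliser of $v$ has order $(|K|-1)!$ regardless of whether $v\in K$ or $v\in S$, yielding $|\mathrm{Aut}(H)|=2\cdot (|K|-1)!$. The main obstacle is the case analysis establishing uniqueness of the twin pair; once this is in hand, the identification of $\mathrm{Aut}(H)$ is a standard consequence of the modular decomposition of $H$ into a prime spider with a single size-$2$ module.
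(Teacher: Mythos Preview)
Your proof is correct. The paper states this lemma without proof, so there is nothing to compare against; your argument supplies exactly the kind of justification the paper omits. The degree argument cleanly separates $K$ from $S$ in both the thin and fat cases, the blossom is fixed by the paper's definition of automorphism (which only permutes non-blossom labels), and your identification $\mathrm{Aut}(G)\cong S_{|K|}$ via $\sigma\mapsto(\sigma,\,f\sigma f^{-1})$ is precisely the ``natural bijection'' alluded to in the statement. For the pseudo-spider, the reduction of candidate twin pairs in $H$ to size-$2$ modules in the prime spider $G$ is the right mechanism, and the orbit--stabiliser step is legitimate because the $S_{|K|}$-action from the first part is transitive on both $K$ and $S$.
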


\subsection{$P_4$-tidy graphs}

\begin{dfn}
A graph $G$ is said to be a \emph{$P_4$-tidy graph} if, for every subgraph $H$ of $G$ inducing a $P_4$, there exists at most one vertex $y\in V_G\backslash V_H$ such that $y$ is connected to at least one element of $H$ but not all, and $y$ is not connected to exactly both midpoints of $H$.
\end{dfn}

\begin{thm}\label{ttidy}

Let $\mathcal{P}_{\mathrm{tidy}}$ be the set containing all $C_5$, $P_5$, $\overline{P_5}$, all prime spiders without blossom and all pseudo-spiders without blossom. Let $\mathcal{P}^{\bullet}_{\mathrm{tidy}}$ be the set of all blossomed prime spiders and all blossomed pseudo-spiders. Then the set of graphs that are $P_4$-tidy is $\mathcal{G}_{\mathcal{P}_{\mathrm{tidy}},\mathcal{P}^{\bullet}_{\mathrm{tidy}}}$.

\end{thm}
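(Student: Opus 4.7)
The plan is to use the known structural characterization of $P_4$-tidy graphs from \cite{tidy}: a graph $G$ is $P_4$-tidy if and only if, in its canonical modular decomposition tree, every prime quotient belongs to $\{C_5, P_5, \overline{P_5}\}$ or is a prime spider, with the additional constraint that every spider quotient may carry at most one $2$-vertex module (a twin pair) at a $K$ or $S$ position, while modules at the $R$-position are unrestricted. The ``at most one twin pair'' restriction is the combinatorial translation of the ``at most one companion vertex'' clause in the definition of $P_4$-tidy: two twin pairs in the $K$-part of a spider already produce a $P_4$ admitting two companions.

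For the inclusion $\{P_4\text{-tidy}\}\subseteq\mathcal{G}_{\mathcal{P}_{\mathrm{tidy}},\mathcal{P}^{\bullet}_{\mathrm{tidy}}}$, I would rewrite the modular decomposition of $G$ as a $(\mathcal{P}_{\mathrm{tidy}},\mathcal{P}^{\bullet}_{\mathrm{tidy}})$-consistent tree. The $\oplus$ and $\ominus$ nodes become case $(D3)$; the nodes decorated $C_5$, $P_5$, $\overline{P_5}$, or a prime spider with $|R|=0$ whose children are all singletons become $(D2)$; a prime spider quotient with a non-trivial module at its $R$-vertex becomes $(D4)$, since blossoming the $R$-vertex yields an element of $\mathcal{P}^{\bullet}_{\mathrm{tidy}}$; and whenever a quotient carries a twin-pair module at some $K$ or $S$ position, I absorb the two leaves and the intervening $\oplus$/$\ominus$ node into the decoration, producing a pseudo-spider node (used as a $(D2)$-decoration if $|R|=0$, or as a $(D4)$-decoration with the corresponding blossomed pseudo-spider in $\mathcal{P}^{\bullet}_{\mathrm{tidy}}$ when the $R$-position also carries a non-trivial substitution). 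For the reverse inclusion I would argue by induction on the enriched tree, checking for each decoration type that any induced $P_4$ spanning several subtrees admits at most one companion in $\mathrm{Graph}(t)$, using the symmetry of spiders and the fact that a pseudo-spider adds exactly one twin to the underlying spider.

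To obtain the bijective correspondence via \Cref{2.18}, I would verify that $(\mathcal{P}_{\mathrm{tidy}},\mathcal{P}^{\bullet}_{\mathrm{tidy}})$ satisfies condition $(C)$. Conditions $(C1)$ and $(C4)$ (no size-$1$ graph; all decorations connected with connected complement) are immediate. Condition $(C5)$ reduces to the observation that in a prime (pseudo-)spider the only modules containing the $R$-vertex are $\{R\}$ or the full vertex set. For $(C2)$ and $(C3)$, the graphs $C_5$, $P_5$, $\overline{P_5}$ and prime spiders have only trivial modules, so the only non-trivial case concerns pseudo-spiders, where the sole non-trivial flowerless module is the twin pair; its induced subgraph is $\oplus[*,*]$ or $\ominus[*,*]$, which is not $(\mathcal{P}_{\mathrm{tidy}},\mathcal{P}^{\bullet}_{\mathrm{tidy}})$-consistent, since by $(C4)$ the next-higher decoration must be both $\oplus$- and $\ominus$-indecomposable.

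The main obstacle will be the singleton-module case of $(C2)$ and $(C3)$: one must rule out accidental isomorphisms between the blossoming of a singleton in an element of $\mathcal{P}_{\mathrm{tidy}}$ (respectively a double-blossoming of an element of $\mathcal{P}^{\bullet}_{\mathrm{tidy}}$) and an element of $\mathcal{P}^{\bullet}_{\mathrm{tidy}}$. A size-parity observation handles many pairs (prime spiders without blossom and blossomed pseudo-spiders have even size, while blossomed prime spiders and pseudo-spiders without blossom have odd size), and the residual potential coincidences can be eliminated by comparing degree sequences: the degree-homogeneity of the $K$ and $S$ parts of a prime spider is broken by the introduction of a twin pair in a pseudo-spider, so no prime spider and blossomed pseudo-spider of the same size can coincide.
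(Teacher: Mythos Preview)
Your first two paragraphs are exactly the paper's argument: cite the structural characterization of $P_4$-tidy graphs from \cite{tidy} (prime quotients are $C_5$, $P_5$, $\overline{P_5}$ or prime spiders, with at most one twin pair at a $K/S$ position and an unrestricted $R$-module) and observe that absorbing the possible twin pair into the decoration turns this into the $(D1)$--$(D4)$ description with the sets $\mathcal{P}_{\mathrm{tidy}},\mathcal{P}^{\bullet}_{\mathrm{tidy}}$. The paper states this as a one-line ``reformulation''; your version just makes the translation explicit.

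Your last two paragraphs, however, are not part of this theorem. Condition $(C)$ is verified separately in the paper (as \Cref{p1}); the statement here only concerns the set equality $\{P_4\text{-tidy}\}=\mathcal{G}_{\mathcal{P}_{\mathrm{tidy}},\mathcal{P}^{\bullet}_{\mathrm{tidy}}}$, and $\mathcal{G}_{\mathcal{P},\mathcal{P}^{\bullet}}$ is defined as the image of $\mathrm{Graph}$ without any injectivity claim, so you do not need $(C)$ here.

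Since you did include a sketch of $(C)$, note one genuine error: for a pseudo-spider $F\in\mathcal{P}_{\mathrm{tidy}}$ and $M$ the twin pair, the induced two-vertex subgraph \emph{is} $(\mathcal{P}_{\mathrm{tidy}},\mathcal{P}^{\bullet}_{\mathrm{tidy}})$-consistent --- it is simply $\oplus$ or $\ominus$ on two leaves, case $(D3)$. Your appeal to $(C4)$ does not apply. The reason $(C2)$ nonetheless holds is the other alternative: $\mathsf{blo}_M(F)$ has its blossom sitting at a $K$- or $S$-position, whereas every element of $\mathcal{P}^{\bullet}_{\mathrm{tidy}}$ has its blossom at the $R$-vertex, so $\mathsf{blo}_M(F)\notin\mathcal{P}^{\bullet}_{\mathrm{tidy}}$. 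The same remark applies to your handling of $(C3)$; the paper's argument there instead uses that the $R$-vertex is intrinsically determined by the (pseudo-)spider, so the two blossoms $*_0,*_1$ cannot both occupy it.
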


\begin{proof}
It is simply a reformulation in our setting of \cite[Theorem 3.3]{tidy} that states that a graph $G$ is $P_4$-tidy if and only if its canonical tree $t$ verifies the following conditions: 

\begin{itemize}
    \item Every node in $t$ is labeled with $\oplus$, $\ominus$, $C_5$, $P_5$, $\overline{P_5}$ or a prime spider.
    \item If a node $w$ in $t$ is decorated with $C_5$, $P_5$ or $\overline{P_5}$, every element of $t_w$ is reduced to a single leaf.
    \item If a node $w$ in $t$ is decorated with a prime spider with $|R|=0$, every element of $t_w$ is a tree of size at most two, and at most one is of size two.
    \item If a node $w$ in $t$ is decorated with a prime spider $H$ with $|R|=1$, let $v$ be the vertex of $H$ in $R$, and $t'$ the $\ell(v)$-th tree of $t_w$. Every element of $t_w\backslash \{t'\}$ is a tree of size at most two, and at most one is of size two.\qedhere
    \end{itemize}\end{proof}

\begin{prop}\label{p1}
The pair $(\mathcal{P}_{\mathrm{tidy}},\mathcal{P}^{\bullet}_{\mathrm{tidy}})$ verifies $(C)$
\end{prop}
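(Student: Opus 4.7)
The plan is to verify conditions $(C1)$ through $(C5)$ one by one, exploiting the rigid combinatorial structure of (pseudo-)spiders and the fact that the blossom of every element of $\mathcal{P}^{\bullet}_{\mathrm{tidy}}$ is always the unique $R$-vertex of the underlying blossomed prime spider.

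Condition $(C1)$ is immediate: every graph in $\mathcal{P}_{\mathrm{tidy}}\cup\mathcal{P}^{\bullet}_{\mathrm{tidy}}$ has at least four vertices. For $(C4)$, the connectedness and co-connectedness of $C_5, P_5, \overline{P_5}$ are classical, and for a spider $(K,S,R)$ with $|K|\geq 2$, connectedness follows from the clique $K$, the adjacency of every $s\in S$ to at least one $k\in K$ (in both thin and fat cases), and the adjacency of every $r\in R$ to all of $K$; the complement is itself a spider with $K$ and $S$ swapped and the thin/fat type flipped, hence also connected. Pseudo-spiders inherit both properties from the duplication construction. For $(C5)$, blossomed prime spiders are prime by the cited proposition, so only $\{*\}$ and $G$ contain the blossom; for a blossomed pseudo-spider $H$ obtained from a blossomed prime spider $G$ by duplicating a non-blossom vertex $u$ into $\{u,u'\}$, any non-trivial module of $H$ must either equal $\{u,u'\}$ or project to a non-trivial module of $G$ after identifying the twins, contradicting the primality of $G$; since $\{u,u'\}$ does not contain the blossom, $(C5)$ holds.

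The bulk of the work lies in $(C2)$ and $(C3)$. The key structural input is that every element of $\mathcal{P}_{\mathrm{tidy}}$ has only trivial modules apart from, at most, the twin pair $\{u,u'\}$ of a pseudo-spider, and the same applies to the flowerless modules of elements of $\mathcal{P}^{\bullet}_{\mathrm{tidy}}$. For $(C2)$, when $M=\{v\}$ is a singleton, $\mathsf{blo}_{\{v\}}(F)$ is $F$ with $v$ relabeled $*$, and for this to be in $\mathcal{P}^{\bullet}_{\mathrm{tidy}}$ would require $v$ to play the role of the unique $R$-vertex of an underlying spider — a direct check rules this out for each of $C_5, P_5, \overline{P_5}$, prime spiders with $|R|=0$, and pseudo-spiders with $|R|=0$; when $M$ is a twin pair of a pseudo-spider, $\mathsf{blo}_M(F)$ recovers the unblossomed underlying prime spider, which lies in $\mathcal{P}_{\mathrm{tidy}}$ and not in $\mathcal{P}^{\bullet}_{\mathrm{tidy}}$. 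Condition $(C3)$ is dispatched analogously by comparing $\mathsf{blo}_{M,0}(F)$ and $\mathsf{blo}_{M',1}(F')$ as labeled graphs with two distinguished vertices $*_0$ and $*_1$: the original blossom of each $F\in \mathcal{P}^{\bullet}_{\mathrm{tidy}}$ sits at an $R$-position while the new blossom from $M$ comes from a non-$R$ module, so any equality of the two graphs would force a matching between an $R$-position and a non-$R$-position, violating the structural asymmetry of the spider. The main obstacle is keeping the bookkeeping of the many sub-cases under control (thin vs.\ fat, with or without a duplicate edge, various sizes of $|K|$), but each sub-case is resolved by the same principle: the $R$-vertex is combinatorially pinned and cannot be confused with any vertex of $K\cup S$ under any relabeling.
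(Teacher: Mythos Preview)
Your proposal is correct and follows essentially the same route as the paper: both arguments rest on the observation that every graph in $\mathcal{P}_{\mathrm{tidy}}\cup\mathcal{P}^{\bullet}_{\mathrm{tidy}}$ is prime except for the pseudo-spiders, whose only non-trivial module is the twin pair, together with the fact that the blossom of any element of $\mathcal{P}^{\bullet}_{\mathrm{tidy}}$ is always the (uniquely determined) $R$-vertex. The paper's treatment of $(C3)$ is exactly your asymmetry argument between the $R$-position and a non-$R$-position of the two distinguished vertices $*_0,*_1$.

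One small wording slip: when $F\in\mathcal{P}_{\mathrm{tidy}}$ is a pseudo-spider and $M$ is the twin pair, $\mathsf{blo}_M(F)$ is not ``the unblossomed underlying prime spider'' lying in $\mathcal{P}_{\mathrm{tidy}}$; by definition of $\mathsf{blo}$ it is that spider \emph{with a blossom} sitting at a $K$- or $S$-position, and this is the reason it fails to belong to $\mathcal{P}^{\bullet}_{\mathrm{tidy}}$ (where the blossom must be the $R$-vertex). Your conclusion is right but the stated justification is off by one blossom. Apart from this, your write-up is more explicit than the paper's (which is rather terse on $(C2)$ for pseudo-spiders), and the ``many sub-cases'' you anticipate for $(C3)$ indeed collapse uniformly under the single principle you identify: the spider partition $(K,S,R)$ is intrinsic, so the $R$-vertex is pinned.
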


\begin{proof}
Note that all the graph in $\mathcal{P}_{\mathrm{tidy}}$ or $\mathcal{P}^{\bullet}_{\mathrm{tidy}}$ are prime except the pseudo-spiders. The only modules of the pseudo-spiders are the trivial ones, and the module formed by the vertex that was duplicated and its duplicate, which implies $(C5)$.\\
$(C2)$ is also verified with the previous observation, as the modules of every graph in $\mathcal{P}_{\mathrm{tidy}}$ are trivial.\\
$(C1)$ is clearly verified and $(C4)$ can be checked easily as all the graphs in $\mathcal{P}_{\mathrm{tidy}}\cup \mathcal{P}^{\bullet}_{\mathrm{tidy}}$ are connected, and their complementary is also connected.

For $(C3)$, assume that for $(F,F')^2\in \mathcal{P}^{\bullet}_{\mathrm{tidy}}$ and $M,M'$ are respectively flowerless modules of $F$ and $F'$, $\mathsf{blo}_{M,0}(G)=\mathsf{blo}_{M',1}(G)$. By cardinality argument, $F$ and $F'$ are either both spiders, or both pseudo-spiders of same size. If both are spiders, as $R$ is uniquely determined by the spiders, and the only element of $R$ does not have the same label in $\mathsf{blo}_{M,0}(G)$ and in $\mathsf{blo}_{M,1}(G)$, we get a contradiction. If both are pseudo-spiders, note that the original node and its duplicate form the only module of size $2$ of $\mathsf{blo}_{M,0}(G)$. Thus the only element of $R$ (in the original spiders) is uniquely determined by the pseudo-spiders, and the only element of $R$ does not have the same label in $\mathsf{blo}_{M,0}(G)$ and in $\mathsf{blo}_{M,1}(G)$, we get a contradiction. \end{proof}

\subsection{$P_4$-lite graphs}

\begin{dfn}
A graph $G$ is said to be a \emph{$P_4$-lite graph} if every subgraph of $G$ of size at most $6$ does not contain three induced $P_4$.
\end{dfn}

\begin{thm}

Let $\mathcal{P}_{\mathrm{lite}}$  be the set containing all $P_5$, $\overline{P_5}$, all prime spiders without blossom and all pseudo-spiders without blossom. Let $\mathcal{P}^{\bullet}_{\mathrm{lite}}$ to be the set containing all blossomed prime spiders and all blossomed pseudo-spiders.
Then the set of graphs that are $P_4$-lite is $\mathcal{G}_{\mathcal{P}_{\mathrm{lite}},\mathcal{P}^{\bullet}_{\mathrm{lite}}}$.

\end{thm}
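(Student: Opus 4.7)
The plan is to mirror the proof of \Cref{ttidy} by invoking the known characterization of $P_4$-lite graphs via their canonical modular decomposition tree and reformulating it in terms of the $(\mathcal{P}_{\mathrm{lite}},\mathcal{P}^{\bullet}_{\mathrm{lite}})$-consistency conditions. The classical structural result from \cite{lite} (which would play the role analogous to Theorem 3.3 of \cite{tidy} used above) states that $G$ is $P_4$-lite if and only if its canonical tree has internal nodes decorated either with $\oplus$, $\ominus$, $P_5$, $\overline{P_5}$ or with a prime spider, and such that the children of a spider (resp.\ of a $P_5$ or $\overline{P_5}$) are constrained in the same way as in the $P_4$-tidy case except that $C_5$ nodes are now forbidden. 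The absence of $C_5$ is precisely the only change between $\mathcal{P}_{\mathrm{tidy}}$ and $\mathcal{P}_{\mathrm{lite}}$.

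First I would quote or re-derive the tree characterization from \cite{lite} (this is standard and parallel to the $P_4$-tidy characterization). Then I would translate the three structural constraints on children of a node decorated with $P_5$, $\overline{P_5}$ or a prime spider into cases (D2)-(D4) of \Cref{d1}: nodes decorated with $P_5$ or $\overline{P_5}$ fall in case (D2) (their children are all leaves), non-blossomed spiders fall in case (D2) or (D4) depending on whether a child of size two is attached (the resulting "duplicated" configuration being precisely captured by a pseudo-spider in $\mathcal{P}_{\mathrm{lite}}$ or by a blossomed pseudo-spider in $\mathcal{P}^{\bullet}_{\mathrm{lite}}$), and blossomed prime spiders fall in case (D4), with the distinguished index $i$ corresponding to the vertex in $R$. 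The $\oplus$ and $\ominus$ nodes are handled by case (D3), with the non-nesting constraint matching the constraint already present in the canonical tree.

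Second, I would verify the converse direction: given $t\in \mathcal{T}_{\mathcal{P}_{\mathrm{lite}},\mathcal{P}^{\bullet}_{\mathrm{lite}}}$, I would show that $\mathrm{Graph}(t)$ is $P_4$-lite. This is a node-by-node verification: one inspects every induced subgraph on at most $6$ vertices and checks that no three induced $P_4$'s occur, using that each internal decoration in $\mathcal{P}_{\mathrm{lite}}\cup \mathcal{P}^{\bullet}_{\mathrm{lite}}$ is itself $P_4$-lite and that the composition of $P_4$-lite pieces by $\oplus$, $\ominus$ or by substitution into a $P_4$-lite skeleton preserves the property (inductively, by a counting argument on how many induced $P_4$'s can straddle several children of a given internal node).

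The main obstacle is the second direction, which in the $P_4$-tidy case is invisible because the argument is entirely outsourced to \cite[Theorem 3.3]{tidy}. For $P_4$-lite I expect the cleanest route is similarly to cite the corresponding structural result from \cite{lite} verbatim, so the only genuine work is the translation of its statement into our framework; this translation is completely parallel to the proof of \Cref{ttidy}, with the single bookkeeping difference that $C_5$ is removed from the list of allowed non-linear non-spider decorations. Finally, an analogue of \Cref{p1} for $(\mathcal{P}_{\mathrm{lite}},\mathcal{P}^{\bullet}_{\mathrm{lite}})$ (whose proof is identical word for word since no new decorations are introduced and the removed $C_5$ only simplifies the verifications) will be needed to apply \Cref{2.18} and obtain the one-to-one encoding.
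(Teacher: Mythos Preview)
Your proposal is correct and follows essentially the same approach as the paper: cite a known characterization of $P_4$-lite graphs in terms of constraints on the canonical modular decomposition tree, and translate those constraints into the $(\mathcal{P}_{\mathrm{lite}},\mathcal{P}^{\bullet}_{\mathrm{lite}})$-consistency framework, exactly parallel to the $P_4$-tidy proof. Two minor remarks: the paper cites \cite[Theorem~3.8]{tidy} rather than \cite{lite} (the Giakoumakis--Vanherpe paper packages the $P_4$-lite characterization in the same format as the $P_4$-tidy one, which makes the translation immediate), and your proposed direct verification of the ``second direction'' is unnecessary since the cited structural theorem is already an if-and-only-if and the translation is bidirectional; the paper does not do this extra work.
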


\begin{proof}
It is simply a reformulation in our setting of \cite[Theorem 3.8]{tidy} that states that a graph $G$ is $P_4$-lite if and only if its canonical tree $t$ verifies the following conditions: 

\begin{itemize}
    \item Every node in $t$ is labeled with $\oplus$, $\ominus$, $P_5$, $\overline{P_5}$ or a prime spider.
    \item If a node $w$ in $t$ is decorated with $P_5$ or $\overline{P_5}$, every element of $t_w$ is reduced to a single leaf.
    \item If a node $w$ in $t$ is decorated with a prime spider with $|R|=0$, every element of $t_w$ is a tree of size at most two, and at most one is of size two.
    \item If a node $w$ in $t$ is decorated with a prime spider $H$ with $|R|=1$, let $v$ be the vertex of $H$ in $R$, and $t'$ the $\ell(v)$-th tree of $t_w$. Every element of $t_w\backslash \{t'\}$ is a tree of size at most two, and at most one is of size two.\qedhere
\end{itemize}\end{proof}

By \Cref{p1} since $\mathcal{P}_{\mathrm{lite}}\subset \mathcal{P}_{\mathrm{tidy}}$, $\mathcal{P}^{\bullet}_{\mathrm{lite}}\subset \mathcal{P}^{\bullet}_{\mathrm{tidy}}$ we get that the pair $(\mathcal{P}_{\mathrm{lite}},\mathcal{P}^{\bullet}_{\mathrm{lite}})$ verifies $(C)$.

\subsection{$P_4$-extendible graphs}

\begin{dfn}
A graph $G$ is said to be a \emph{$P_4$-extendible graph} if, for every subgraph $H$ of $G$ inducing a $P_4$, there exists at most one vertex $y\in V_G\backslash V_H$ such that $y$ belongs to an induced $P_4$ sharing at least one vertex with $H$.
\end{dfn}

\begin{thm}

Let $\mathcal{P}_{\mathrm{ext}}$ be the set containing all $C_5$, $P_5$, $\overline{P_5}$, $P_4$ and all pseudo-$P_4$. Let $\mathcal{P}^{\bullet}_{\mathrm{ext}}$ be the set containing all bulls and all pseudo-bulls.
Then the set of graphs that are $P_4$-extendible is $\mathcal{G}_{\mathcal{P}_{\mathrm{ext}},\mathcal{P}^{\bullet}_{\mathrm{ext}}}$.

\end{thm}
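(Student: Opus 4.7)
The plan is to follow the same template as the proofs of \Cref{ttidy} and its $P_4$-lite analogue: invoke the classical characterization of $P_4$-extendible graphs by their canonical modular decomposition tree, due to Jamison and Olariu \cite{extendible}, and translate it case-by-case into the language of $(\mathcal{P},\mathcal{P}^\bullet)$-consistent trees from \Cref{d1}. Specifically, the classical result states that $G$ is $P_4$-extendible if and only if each internal node of its canonical tree is decorated with $\oplus$, $\ominus$, $C_5$, $P_5$, $\overline{P_5}$, $P_4$, or a bull, subject to the following subtree constraints: children of $C_5$, $P_5$ or $\overline{P_5}$ nodes are all leaves; children of a $P_4$ node are all leaves except possibly one, which must then be a size-$2$ module (necessarily $\oplus$ or $\ominus$ of two leaves); and children of a bull node are all leaves except the $R$-child (which is an arbitrary consistent subtree) and possibly one additional non-$R$ child, which must then be a size-$2$ module.

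The translation into $(\mathcal{P}_{\mathrm{ext}},\mathcal{P}^\bullet_{\mathrm{ext}})$-consistent trees is then mechanical: $\oplus$ and $\ominus$ nodes fit case $(D3)$ of \Cref{d1}; $C_5$, $P_5$, $\overline{P_5}$ and $P_4$ nodes with only leaf children fit case $(D2)$, since these graphs all lie in $\mathcal{P}_{\mathrm{ext}}$; a $P_4$ node carrying one size-$2$ child is absorbed into a single node decorated by a pseudo-$P_4$ with only leaf children, which is again case $(D2)$; and a bull node (with or without an additional size-$2$ non-$R$ child) is encoded by case $(D4)$, where the blossom marks the $R$-vertex and $\mathsf{blo}_{\{v_i\}}(H)$ is either a blossomed bull or a blossomed pseudo-bull, both of which lie in $\mathcal{P}^\bullet_{\mathrm{ext}}$.

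The one step requiring a small amount of care, rather than a genuine obstacle, is to verify that the size-$2$ modules arising in the canonical tree correspond exactly to the duplications defining pseudo-$P_4$'s and pseudo-bulls in \Cref{pseudo}: a size-$2$ $\ominus$-module amounts to duplicating a vertex without adding an edge, a size-$2$ $\oplus$-module amounts to duplicating a vertex and adding an edge between the duplicates; running through the four non-blossom positions of the $P_4$ (resp. the four non-$R$, non-blossom positions of the bull) one then recovers exactly the elements of $\mathcal{P}_{\mathrm{ext}}$ (resp. $\mathcal{P}^\bullet_{\mathrm{ext}}$) coming from these pseudo-graphs. Once this identification is in place, the set equality $\mathcal{G}_{\mathcal{P}_{\mathrm{ext}},\mathcal{P}^\bullet_{\mathrm{ext}}}=\{G : G \text{ is } P_4\text{-extendible}\}$ follows by the same direct bookkeeping as in the proof of \Cref{ttidy}.
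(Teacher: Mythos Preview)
Your proposal is correct and follows essentially the same approach as the paper: invoke the known characterization of $P_4$-extendible graphs via constraints on the canonical tree (the paper cites \cite[Theorem 3.7]{tidy} rather than \cite{extendible} directly, but it is the same statement), and then translate those constraints into the cases $(D1)$--$(D4)$ of \Cref{d1}. Your write-up is in fact more explicit than the paper's about why the size-$2$ modules hanging off a $P_4$ or bull node become exactly the pseudo-$P_4$'s and pseudo-bulls of \Cref{pseudo}, which the paper leaves to the reader.
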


\begin{proof}
It is simply a reformulation in our setting of \cite[Theorem 3.7]{tidy} that states that a graph $G$ is $P_4$-extendible if and only if its canonical tree $t$ verifies the following conditions: 

\begin{itemize}
    \item Every node in $t$ is labeled with $\oplus$, $\ominus$, $C_5$, $P_5$, $\overline{P_5}$, $P_4$ or a bull.
    \item If a node $w$ in $t$ is decorated with $C_5$, $P_5$ or $\overline{P_5}$, every element of $t_w$ is reduced to a single leaf.
    \item If a node $w$ in $t$ is decorated with $P_4$, every element of $t_w$ is a tree of size at most two, and at most one is of size two.
    \item If a node $w$ in $t$ is decorated with a bull $G$, let $v$ be the vertex of $G$ in $R$, and $t'$ the $\ell(v)$-th tree of $t_w$. Every element of $t_w\backslash \{t'\}$ is a tree of size at most two, and at most one is of size two.\qedhere
\end{itemize}\end{proof}

By \Cref{p1} since $\mathcal{P}_{\mathrm{ext}}\subset \mathcal{P}_{\mathrm{tidy}}$, $\mathcal{P}^{\bullet}_{\mathrm{ext}}\subset \mathcal{P}^{\bullet}_{\mathrm{tidy}}$ we get that the pair $(\mathcal{P}_{\mathrm{ext}},\mathcal{P}_{\mathrm{ext}}^{\bullet})$ verifies $(C)$.

\subsection{$P_4$-sparse graphs}

\begin{dfn}
A graph $G$ is said to be a \emph{$P_4$-sparse graph} if every subgraph of $G$ of size $5$ does not contain two induced $P_4$.
\end{dfn}

\begin{thm}

Let $\mathcal{P}$ be the set containing all prime spiders without blossom. Let $\mathcal{P}^{\bullet}$ be the set containing all blossomed prime spiders.
Then the set of graphs that are $P_4$-sparse is $\mathcal{G}_{\mathcal{P},\mathcal{P}^{\bullet}}$.

\end{thm}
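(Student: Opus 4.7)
The plan is to mirror the proofs already given for $P_4$-tidy, $P_4$-lite and $P_4$-extendible: invoke the classical characterization of $P_4$-sparse graphs via their modular decomposition tree, then reformulate this characterization in the language of $(\mathcal{P},\mathcal{P}^{\bullet})$-consistent trees of \Cref{d1}. The classical result I will rely on is the Jamison--Olariu theorem (see \cite{jamison,sparse}, or equivalently the $P_4$-sparse specialization of the modular decomposition description in \cite[Theorem 3.7]{tidy}), which states that a graph $G$ is $P_4$-sparse if and only if its canonical tree $t$ satisfies:
\begin{itemize}
    \item every internal node is decorated with $\oplus$, $\ominus$, or a prime spider;
    \item if a node $w$ is decorated with a prime spider with $|R|=0$, then every element of $t_w$ is reduced to a single leaf;
    \item if a node $w$ is decorated with a prime spider $H$ with $|R|=1$, and $v$ denotes the unique vertex of $H$ in $R$, then every element of $t_w$ other than the $\ell(v)$-th subtree is reduced to a single leaf.
\end{itemize}

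The second step is the translation. The $\oplus$/$\ominus$ nodes are always handled by case $(D3)$. A prime spider with $|R|=0$ has no vertex that can be replaced by a non-trivial module, so all its children are leaves: this is exactly case $(D2)$ with $H\in\mathcal{P}$ (prime spiders without blossom). A prime spider with $|R|=1$ allows exactly one child (the one attached to the vertex of $R$) to be a non-trivial subtree: this is exactly case $(D4)$, where the condition $\mathsf{blo}_{\{v_i\}}(H)\in\mathcal{P}^{\bullet}$ forces $i$ to correspond to the vertex of $R$ (since in a prime spider with $|R|=1$, the blossom of any $\mathsf{blo}_{\{v_j\}}(H)$ belongs to the new graph's $R$ only when $v_j$ was already the unique $R$-vertex of $H$, otherwise $\mathsf{blo}_{\{v_j\}}(H)$ is not a blossomed prime spider in the sense of \Cref{spider}--\Cref{pseudo}). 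Putting these three cases together yields exactly the set $\mathcal{T}_{\mathcal{P},\mathcal{P}^{\bullet}}$, and the bijection $t\mapsto\mathrm{Graph}(t)$ shows that $\mathcal{G}_{\mathcal{P},\mathcal{P}^{\bullet}}$ coincides with the class of $P_4$-sparse graphs.

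Finally, I would observe that condition $(C)$ is inherited from the $P_4$-tidy case: since $\mathcal{P}\subset\mathcal{P}_{\mathrm{tidy}}$ and $\mathcal{P}^{\bullet}\subset\mathcal{P}^{\bullet}_{\mathrm{tidy}}$, the remark following \Cref{cond1} together with \Cref{p1} implies that $(\mathcal{P},\mathcal{P}^{\bullet})$ verifies $(C)$, so that \Cref{2.18} applies and the encoding is in fact bijective. The only real obstacle in this proof is making sure that the blossom position in case $(D4)$ correctly matches the distinguished vertex of $R$ in the spider --- a verification that reduces to unfolding \Cref{defblo} and \Cref{spider} and noting that a prime spider has a unique automorphism-invariant ``$R$-vertex'' whenever $|R|=1$.
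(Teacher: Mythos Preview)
Your proposal is correct and follows essentially the same approach as the paper: both invoke the classical modular-decomposition characterization of $P_4$-sparse graphs and reformulate it as $(\mathcal{P},\mathcal{P}^{\bullet})$-consistency in the sense of \Cref{d1}. The only discrepancy is bibliographic --- the paper cites \cite[Theorem 3.4]{giakou} rather than \cite{jamison,sparse} or \cite[Theorem 3.7]{tidy} --- and the paper treats the verification of condition $(C)$ as a separate remark rather than part of this proof.
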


\begin{proof}

It is simply a reformulation in our setting of \cite[Theorem 3.4]{giakou} that states that a graph $G$ is $P_4$-sparse if and only if its canonical tree $t$ verifies the following conditions: 

\begin{itemize}
    \item Every node in $t$ is labeled with $\oplus$, $\ominus$ or a prime spider.
    
    \item If a node $w$ in $t$ is decorated with a prime spider with $|R|=0$, every element of $t_w$ is reduced to a single leaf.
    \item If a node $w$ in $t$ is decorated with a prime spider $h$ with $|R|=1$, let $v$ be the vertex of $H$ in $R$, and $t'$ the $\ell(v)$-th tree of $t_w$. Every element of $t_w\backslash \{t'\}$ is reduced to a single leaf.\qedhere
\end{itemize}\end{proof}

By \Cref{p1} since $\mathcal{P}_{\mathrm{spa}}\subset \mathcal{P}_{\mathrm{tidy}}$, $\mathcal{P}^{\bullet}_{\mathrm{spa}}\subset \mathcal{P}^{\bullet}_{\mathrm{tidy}}$ we get that the pair $(\mathcal{P}_{\mathrm{spa}},\mathcal{P}^{\bullet}_{\mathrm{spa}})$ verifies $(C)$.

\subsection{$P_4$-reducible graphs}

\begin{dfn}
A graph $G$ is said to be a \emph{$P_4$-reducible graph} if every vertex of $G$ belongs to at most one induced $P_4$.
\end{dfn}

\begin{thm}

Let $\mathcal{P}_{\mathrm{red}}$ be the set containing all $P_4$. Let $\mathcal{P}^{\bullet}_{\mathrm{red}}$ be the set containing all bulls.
Then the set of graphs that are $P_4$-reducible is $\mathcal{G}_{\mathcal{P}_{\mathrm{red}},\mathcal{P}^{\bullet}_{\mathrm{red}}}$.

\end{thm}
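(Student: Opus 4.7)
The plan is to follow verbatim the pattern of the four preceding theorems in this section. I would cite the classical characterization (due to Jamison and Olariu \cite{reduc, reductible}) of $P_4$-reducible graphs in terms of their canonical modular decomposition tree, and then verify that it is equivalent to the definition of $(\mathcal{P}_{\mathrm{red}}, \mathcal{P}^{\bullet}_{\mathrm{red}})$-consistent trees.

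The characterization reads as follows: a graph $G$ is $P_4$-reducible if and only if its canonical tree $t$ satisfies the three conditions
\begin{itemize}
    \item every internal node of $t$ is labeled with $\oplus$, $\ominus$, a $P_4$, or a bull;
    \item if a node $w$ is labeled with $P_4$, every element of $t_w$ is a single leaf;
    \item if a node $w$ is labeled with a bull $H$ and $v$ is the $R$-vertex of $H$, every element of $t_w$ except the $\ell(v)$-th one is a single leaf.
\end{itemize}

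I would then check that these three bullets match the cases $(D1)$--$(D4)$ of \Cref{d1}: single leaves correspond to $(D1)$; $\oplus$ and $\ominus$ nodes to $(D3)$; $P_4$-decorated nodes with all-leaf children to $(D2)$ (since $P_4 \in \mathcal{P}_{\mathrm{red}}$); and bull-decorated nodes to $(D4)$ with $i = \ell(v)$. The key verification at this last step is that $\mathsf{blo}_{\{v_i\}}(H) \in \mathcal{P}^{\bullet}_{\mathrm{red}}$, i.e.\ is a blossomed bull, precisely when $v_i$ is the $R$-vertex of the bull $H$; this forces $i = \ell(v)$. Conversely, every tree in $\mathcal{T}_{\mathcal{P}_{\mathrm{red}}, \mathcal{P}^{\bullet}_{\mathrm{red}}}$ is of this shape, so $\mathcal{G}_{\mathcal{P}_{\mathrm{red}}, \mathcal{P}^{\bullet}_{\mathrm{red}}}$ coincides with the class of $P_4$-reducible graphs.

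This is essentially a bookkeeping exercise and I expect no substantial obstacle. The only delicate point is the identification of the index $i$ in $(D4)$, which relies on the fact that among the five vertices of a bull only the $R$-vertex produces a blossomed bull upon the $\mathsf{blo}$ operation, the structural roles of $K$-, $S$- and $R$-vertices being distinct. Finally, as in the four previous classes, Condition $(C)$ for $(\mathcal{P}_{\mathrm{red}}, \mathcal{P}^{\bullet}_{\mathrm{red}})$ is inherited from \Cref{p1} via the inclusions $\mathcal{P}_{\mathrm{red}} \subset \mathcal{P}_{\mathrm{tidy}}$ and $\mathcal{P}^{\bullet}_{\mathrm{red}} \subset \mathcal{P}^{\bullet}_{\mathrm{tidy}}$ together with the remark following the statement of Condition $(C)$, so that \Cref{2.18} gives the desired one-to-one encoding of $P_4$-reducible graphs by their $(\mathcal{P}_{\mathrm{red}}, \mathcal{P}^{\bullet}_{\mathrm{red}})$-consistent trees.
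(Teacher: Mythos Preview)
Your proposal is correct and follows essentially the same approach as the paper: quote the known characterization of $P_4$-reducible graphs via constraints on the canonical modular-decomposition tree, and observe that these constraints coincide with the definition of $(\mathcal{P}_{\mathrm{red}},\mathcal{P}^{\bullet}_{\mathrm{red}})$-consistency. The paper cites \cite[Theorem~4.2]{giakou} rather than \cite{reduc,reductible} for the characterization and places the Condition~$(C)$ remark outside the proof, but otherwise your argument (including the extra detail on matching cases $(D1)$--$(D4)$ and identifying the index $i$ in $(D4)$) is the same, only more explicit.
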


\begin{proof}
It is simply a reformulation in our setting of \cite[Theorem 4.2]{giakou} that states that a graph $G$ is $P_4$-reducible if and only if its canonical tree $t$ verifies the following conditions: 

\begin{itemize}
    \item Every node in $t$ is labeled with $\oplus$, $\ominus$, $P_4$ or a bull.
    
    \item If a node $w$ in $t$ is decorated with a $P_4$, every element of $t_w$ is reduced to a single leaf.
    
    \item If a node $w$ in $t$ is decorated with a bull $H$, let $v$ be the vertex of $H$ in $R$, and $t'$ the $\ell(v)$-th tree of $t_w$. Every element of $t_w\backslash \{t'\}$ is reduced to a single leaf.\qedhere
\end{itemize}\end{proof}

By \Cref{p1} since $\mathcal{P}_{\mathrm{red}}\subset \mathcal{P}_{\mathrm{tidy}}$, $\mathcal{P}^{\bullet}_{\mathrm{red}}\subset \mathcal{P}^{\bullet}_{\mathrm{tidy}}$ we get that the pair $(\mathcal{P}_{\mathrm{red}},\mathcal{P}^{\bullet}_{\mathrm{red}})$ verifies $(C)$.

\subsection{$P_4$-free graphs (cographs)}

\begin{dfn}
A graph $G$ is said to be a \emph{cograph} if no subgraph of $G$ induces a $P_4$.
\end{dfn}

\begin{thm}
Set $\mathcal{P}_{\mathrm{cog}}=\emptyset$ and $\mathcal{P}^{\bullet}_{\mathrm{cog}}=\emptyset$.
Then the set of graphs that are cographs is $\mathcal{G}_{\mathcal{P}_{\mathrm{cog}},\mathcal{P}^{\bullet}_{\mathrm{cog}}}$.

\end{thm}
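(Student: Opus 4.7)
My plan is to read off the statement directly from \Cref{d1} and the classical modular-decomposition characterization of cographs.

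First I would observe that with $\mathcal{P}_{\mathrm{cog}}=\emptyset$ and $\mathcal{P}^{\bullet}_{\mathrm{cog}}=\emptyset$, cases $(D2)$ and $(D4)$ of \Cref{d1} are vacuous: $(D2)$ requires the root decoration to lie in $\mathcal{P}_{\mathrm{cog}}$, and $(D4)$ requires $\mathsf{blo}_{\{v_i\}}(H)\in \mathcal{P}^{\bullet}_{\mathrm{cog}}$. Hence a tree in $\mathcal{T}_{\emptyset,\emptyset}$ is built exclusively from leaves (case $(D1)$) and from internal nodes decorated with $\oplus$ or $\ominus$ whose children are themselves $(\emptyset,\emptyset)$-consistent with root decoration not equal to that of the parent (case $(D3)$). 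In other words, $\mathcal{T}_{\emptyset,\emptyset}$ is exactly the set of canonical trees whose internal nodes are all labeled $\oplus$ or $\ominus$ (no prime decorations).

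Next I would invoke the classical modular-decomposition characterization of cographs recalled after \Cref{thm1}: a graph $G$ is $P_4$-free if and only if no prime graph appears as a decoration in its canonical tree, i.e.~all internal nodes of its canonical tree are $\oplus$ or $\ominus$. This is the standard Gallai/Seinsche theorem (see \cite{Gallai, graphclasses}). Combined with the bijection between graphs and canonical trees stated just before the proposition, this yields $\mathcal{G}_{\emptyset,\emptyset}=\{G:G\text{ is a cograph}\}$.

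The uniqueness part of \Cref{2.18} is not actually needed here (indeed, condition $(C)$ trivially holds for $(\emptyset,\emptyset)$, since every quantifier in $(C1)$--$(C5)$ ranges over an empty set). The only subtle point is the identification in the previous paragraph between $(\emptyset,\emptyset)$-consistent trees and canonical trees of cographs: this relies on checking that the alternation condition between $\oplus$ and $\ominus$ in $(D3)$ matches the canonicity condition, which is immediate from the definition of canonical tree. There is no real obstacle; this statement is essentially a base-case sanity check that shows cographs sit inside the general framework of \Cref{enriched} as the trivial instance.
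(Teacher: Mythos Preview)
Your proposal is correct and follows essentially the same approach as the paper: both observe that with $\mathcal{P}=\mathcal{P}^{\bullet}=\emptyset$ the set $\mathcal{T}_{\emptyset,\emptyset}$ reduces to canonical trees with only $\oplus/\ominus$ decorations, and then invoke the classical characterization of cographs (the paper cites \cite[Theorem 7]{corn}) stating that a graph is $P_4$-free if and only if its canonical tree has no prime node. Your write-up is simply more explicit about why cases $(D2)$ and $(D4)$ are vacuous and why the alternation condition in $(D3)$ matches canonicity.
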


\begin{proof}
It is simply a reformulation in our setting of \cite[Theorem 7]{corn} that states that a graph $G$ is a cograph if and only if its canonical tree $t$ has no internal node decorated with a prime graph. \end{proof}

Clearly the pair $(\mathcal{P}_{\mathrm{cog}},\mathcal{P}^{\bullet}_{\mathrm{cog}})$ verifies $(C)$.

\section{Enriched modular decomposition: enumerative results}\label{sec4}
\subsection{Exact enumeration}\label{sec:exa}
In the following, we establish combinatorial identities between formal power series involving subsets of $\mathcal{P}$ and $\mathcal{P}^{\bullet}$.

Throughout this section, we consider generic pairs $(\mathcal{P},\mathcal{P}^{\bullet})$ where $\mathcal{P}$ (resp.~$\mathcal{P}^{\bullet}$) is a set of graphs with no blossom (resp.~with one blossom) verifying condition $(C)$ defined p.\pageref{cond1}.

Recall that for a graph $G$ with blossoms, $N(G)$ is the number of vertices that are not a blossom: this will be the crucial parameter in the subsequent analysis. Let $P^{\bullet}(z):=\sum\limits_{s\in \mathcal{P}^{\bullet}}\frac{z^{N(s)}}{N(s)!}$ and $P(z):=\sum\limits_{s\in \mathcal{P}}\frac{z^{N(s)}}{N(s)!}$.

For $n\in \N$, let $\mathcal{P}_n$ (resp.~$\mathcal{P}^{\bullet}_n$) be the set of graphs $G$ in $\mathcal{P}$ (resp.~$\mathcal{P}^{\bullet}$) such that $N(G)=n$.

Note that, if both $\mathcal{P}$ and $\mathcal{P}^{\bullet}$ are stable under relabeling (which is the case for the classes of graphs mentioned in \Cref{sec:cla}), for each $n\in \N$, there is a natural action $\Phi_n$ of the permutations of $\{1,\dots, n\}$ over $\mathcal{P}_n$ and $\mathcal{P}^{\bullet}_n$. Let $R_{\mathcal{P}_n}$ and $R_{\mathcal{P}^{\bullet}_n}$ be a system of representants of every orbit under this action, then
$$P^{\bullet}(z)=\sum_{n\in\N}|\mathcal{P}^{\bullet}_n|\frac{z^{n}}{n!}=\sum_{n\in\N}\sum_{s\in R_{\mathcal{P}^{\bullet}_n}}|R_{\mathcal{P}^{\bullet}_n}|\frac{n!}{|\mathrm{Aut}(s)|}\frac{z^{n}}{n!}=\sum_{n\in\N}\sum_{s\in R_{\mathcal{P}^{\bullet}_n}}| R_{\mathcal{P}^{\bullet}_n}|\frac{z^{n}}{|\mathrm{Aut}(s)|}.$$

Similarly, we have:
$$P(z)=\sum_{n\in\N}\sum_{s\in R_{\mathcal{P}_n}}| R_{\mathcal{P}_n}|\frac{z^n}{|\mathrm{Aut}(s)|}.$$

\begin{thm}

For each graph class introduced in \Cref{sec:cla}, we have the following expressions for $P$ and $P^{\bullet}$: \medskip

\begin{center}
\begin{tabular}{|l|l|}
  \hline
  $P_4$-tidy  & $P^{\bullet}_{\mathrm{tidy}}(z)=(2+4z^3)\exp(z^2)-2-2z^2-4z^3-\frac{z^4}{2}-2z^5$ \\
   & $P_{\mathrm{tidy}}(z)=P^{\bullet}_{\mathrm{tidy}}(z)+z^5+\frac{z^5}{10}$ \\
   \hline
  $P_4$-lite  & $P^{\bullet}_{\mathrm{lite}}(z)=(2+4z^3)\exp(z^2)-2-2z^2-4z^3-\frac{z^4}{2}-2z^5$  \\
   & $P_{\mathrm{lite}}(z)=P^{\bullet}_{\mathrm{lite}}(z)+z^5$ \\
   \hline
  $P_4$-extendible  & $P^{\bullet}_{\mathrm{ext}}(z)=\frac{z^4}{2}+2z^5$ \\
  & $P_{\mathrm{ext}}(z)=P^{\bullet}_{\mathrm{ext}}(z)+z^5+\frac{z^5}{10}$ \\
  \hline
  $P_4$-sparse & $P^{\bullet}_{\mathrm{spa}}(z)=P_{\mathrm{spa}}(z)=2(\exp(z^2)-1-z^2-\frac{z^4}{4})$ \\
  \hline
  $P_4$-reducible & $P^{\bullet}_{\mathrm{red}}(z)=P_{\mathrm{red}}(z)=\frac{z^4}{2}$  \\
  \hline
  $P_4$-free & $P^{\bullet}_{\mathrm{cog}}(z)=P_{\mathrm{cog}}(z)=0$  \\
  \hline
\end{tabular}
\end{center}

\end{thm}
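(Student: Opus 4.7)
My strategy is to reduce everything to the orbit–stabilizer identity
\[
P(z) = \sum_{s} \frac{z^{N(s)}}{|\mathrm{Aut}(s)|}, \qquad P^{\bullet}(z) = \sum_{s} \frac{z^{N(s)}}{|\mathrm{Aut}(s)|},
\]
where the sums range over isomorphism classes of graphs in $\mathcal{P}$ (resp.~$\mathcal{P}^{\bullet}$); this is immediate from the expressions for $P, P^{\bullet}$ displayed just before the theorem. The characterizations in \Cref{sec:cla} decompose each of the six pairs $(\mathcal{P}, \mathcal{P}^{\bullet})$ as a disjoint union of a handful of ``atoms'': small graphs ($P_4$, bull, $C_5$, $P_5$, $\overline{P_5}$) together with the families of (thin and fat, blossomed and unblossomed) prime spiders and pseudo-spiders. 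My plan is to compute the EGF of each atom once and for all, and then assemble the six formulas.

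The small graphs are immediate by inspection of their automorphism groups: $|\mathrm{Aut}(C_5)| = 10$ (dihedral) and $|\mathrm{Aut}(P_5)| = |\mathrm{Aut}(\overline{P_5})| = |\mathrm{Aut}(P_4)| = |\mathrm{Aut}(\text{bull})| = 2$, yielding atoms $z^5/10$, $z^5/2$, $z^5/2$, $z^4/2$, $z^4/2$. For prime spiders I index by $k = |K| \geq 2$: there is one isomorphism class at $k = 2$ (where thin and fat coincide: the $P_4$, or the bull in the blossomed case) and two classes at $k \geq 3$; each has $N(s) = 2k$ and $|\mathrm{Aut}(s)| = k!$ by the automorphism lemma in \Cref{sec:cla}. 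Summing and simplifying gives a total spider contribution
\[
\frac{z^4}{2} + 2\sum_{k \geq 3} \frac{z^{2k}}{k!} = 2\Bigl(\exp(z^2) - 1 - z^2 - \frac{z^4}{4}\Bigr),
\]
identically in the blossomed and unblossomed cases, since the blossom is the unique vertex of $R$ and does not contribute to $N(s)$. This already yields the $P_4$-sparse formulas, and the $k = 2$ specialization yields $P_4$-reducible.

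For the pseudo-spider atom I proceed similarly. The automorphism group of the base spider (of order $k!$) acts on its $2k$ non-blossom vertices with exactly two orbits, $K$ and $S$, so the duplication of a non-blossom vertex combined with the binary edge/no-edge choice yields $2 \times 2 = 4$ pseudo-spider isomorphism classes per thin/fat/blossom type; each has $N(s) = 2k+1$ and $|\mathrm{Aut}(s)| = 2(k-1)!$. Applying the thin$=$fat collapse at $k = 2$ gives
\[
4 \cdot \frac{z^5}{2} + \sum_{k \geq 3} 8 \cdot \frac{z^{2k+1}}{2(k-1)!} = 2z^5 + 4z^3\bigl(\exp(z^2) - 1 - z^2\bigr),
\]
again identically in the blossomed and unblossomed cases.

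With the atoms in hand, each of the six formulas follows by directly summing the appropriate ones according to \Cref{sec:cla}: $P^{\bullet}_{\mathrm{tidy}}$ is the sum of the blossomed spider and blossomed pseudo-spider atoms, which rearranges to the claimed expression; $P_{\mathrm{tidy}}$ adds the $C_5$, $P_5$, $\overline{P_5}$ atoms; $P_4$-lite differs from $P_4$-tidy only by excluding $C_5$; $P_4$-extendible keeps only $P_4$, pseudo-$P_4$, $C_5$, $P_5$, $\overline{P_5}$ in $\mathcal{P}$ and only bull, pseudo-bull in $\mathcal{P}^{\bullet}$; and $P_4$-free is trivial since both sets are empty. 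The most delicate step will be the pseudo-spider enumeration: I need to verify carefully that the two $S_k$-orbits on non-blossom vertices are exactly $K$ and $S$, that the four resulting graphs (two orbit choices $\times$ edge/no edge) are pairwise non-isomorphic, and that no further collapses occur beyond the expected thin$=$fat identification at $k = 2$.
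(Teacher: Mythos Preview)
Your approach is essentially identical to the paper's: both reduce $P$ and $P^{\bullet}$ to a sum of $z^{N(s)}/|\mathrm{Aut}(s)|$ over isomorphism classes, enumerate the same atoms (small sporadic graphs, prime spiders indexed by $k=|K|$ with the thin/fat collapse at $k=2$, and pseudo-spiders with the $2\times 2$ duplicate-vertex choices), and assemble the six formulas by summing atoms according to the descriptions in \Cref{sec:cla}. The paper only spells out the $P_4$-tidy case and simply asserts the orbit counts you flag for verification (e.g.\ that the eight pseudo-spider types for $k\geq 3$ are pairwise non-isomorphic), so your caution there is appropriate but the check is routine.
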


\begin{proof}
We only detail the computation of $P_{\mathrm{tidy}}$ and $P^{\bullet}_{\mathrm{tidy}}$ for $P_4$-tidy graphs as this is the most involved case. According to \Cref{ttidy}, $\mathcal{P}_{\mathrm{tidy}}$ is composed of one $C_5$ that has $10$ automorphisms and all its relabelings, one $P_5$, and one $\overline{P_5}$ that both have $2$ automorphisms and all their relabelings.

For $k\geq 3$ (resp.~$k=2$), there are thin and fat spiders corresponding to the $2$ (resp.~$1$) different orbits of the action $\Phi_{2k}$ over prime spiders of size $2k$, each having $k!$ automorphisms. 

For $k\geq 3$ (resp.~$k=2$), there are thin and fat pseudo-spiders, the duplicated vertex can come from $K$ or $S$, and can be connected or not to the initial vertex. These $8$ (resp.~$4$) cases correspond to the $8$ (resp.~$4$) different orbits of the action $\Phi_{2k+1}$ over pseudo-spiders of size $2k+1$, each having $2(k-1)!$ automorphisms.

Thus we have 
\begin{align*}
P_{\mathrm{tidy}}(z)&=\frac{z^5}{10}+\frac{2z^5}{2}+\frac{z^4}{2}+2\sum\limits_{k\geq 3}\frac{z^{2k}}{k!}+4\frac{z^5}{2}+8\sum\limits_{k\geq 3}\frac{z^{2k+1}}{2 (k-1)!}\\
&=z^5+\frac{z^5}{10}+(2+4z^3)\exp(z^2)-2-2z^2-4z^3-\frac{z^4}{2}-2z^5.
\end{align*}

Now let us compute $P^{\bullet}_{\mathrm{tidy}}$. For $k\geq 3$ (resp.~$k=2$), there are thin and fat spiders with blossom corresponding to the $2$ (resp.~$1$) different orbits of the action $\Phi_{2k}$ over blossomed prime spiders $G$ with $2k$ non blossomed vertices, each having $k!$ automorphisms. 

For $k\geq 3$ (resp.~$k=2$), there are thin and fat pseudo-spiders, the duplicated vertex can come from $K$ or $S$, and can be connected or not to the initial vertex. These $8$ (resp.~$4$) cases correspond to the $8$ (resp.~$4$) different orbits of the action $\Phi_{2k+1}$ over blossomed pseudo-spiders  with $2k+1$ non blossomed vertices, each having $2(k-1)!$ automorphisms. 

Hence
$$P^{\bullet}_{\mathrm{tidy}}(z)=\frac{z^4}{2}+2\sum\limits_{k\geq 3}\frac{z^{2k}}{k!}+4\frac{z^5}{2}+8\sum\limits_{k\geq 3}\frac{z^{2k+1}}{2 (k-1)!}=(2+4z^3)\exp(z^2)-2-2z^2-4z^3-\frac{z^4}{2}-2z^5,$$
which gives the announced result.\end{proof}

Let $T$ be the exponential generating function of $\mathcal{T}_{\mathcal{P},\mathcal{P}^{\bullet}}$, the set of trees defined in \cref{d1} counted by their number of leaves. Denote by $\mathcal{T}_{\mathrm{not}\oplus}$ (resp.~$\mathcal{T}_{\mathrm{not}\ominus}$) the set of all $t\in \mathcal{T}_{\mathcal{P},\mathcal{P}^{\bullet}}$ whose root is not decorated with $\oplus$ (resp.~$\ominus$) and by $T_{\mathrm{not}\oplus}$ (resp.~$T_{\mathrm{not}\ominus}$) the corresponding exponential generating function.

\begin{thm}

The exponential generating function $T_{\mathrm{not}\oplus}$ verifies the following equation:
\begin{align}
  \label{ecu123}
T_{\mathrm{not}\oplus}&=z+P+(\exp(T_{\mathrm{not}\oplus})-1)P^{\bullet}+ \exp(T_{\mathrm{not}\oplus})-1-T_{\mathrm{not}\oplus},
\end{align}
and the series $T$ and $T_{\mathrm{not}\ominus}$ are simply given by the following equations:
\begin{align}
  \label{ecu124}
&T=\exp(T_{\mathrm{not}\oplus})-1\\
    \label{ecutri}
 &T_{\mathrm{not}\ominus}=T_{\mathrm{not}\oplus}
\end{align}

Moreover, \cref{ecu123} with $T_{\mathrm{not}\oplus}(0)=0$ determines uniquely (as a formal series) the generating function $T_{\mathrm{not}\oplus}$.
\end{thm}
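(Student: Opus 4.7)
The plan is to decompose trees in $\mathcal{T}_{\mathcal{P},\mathcal{P}^{\bullet}}$ along the four cases of \Cref{d1} and translate each case into an EGF contribution via the symbolic method; the uniqueness supplied by \Cref{2.18} is what legitimises writing each case as an independent EGF term. For $T_{\mathrm{not}\oplus}$, case (D1) contributes $z$, case (D2) contributes $P(z)$ by definition of $P$, and case (D3) here forces an $\ominus$-rooted tree with at least two children drawn as an unordered labeled set from $\mathcal{T}_{\mathrm{not}\ominus}$, contributing $\exp(T_{\mathrm{not}\ominus})-1-T_{\mathrm{not}\ominus}$. In case (D4), the root carries a decoration $H\notin\{\oplus,\ominus\}$ with $\mathsf{blo}_{\{v_i\}}(H)\in\mathcal{P}^{\bullet}$ for a unique $i$, the $i$-th subtree is an arbitrary consistent tree, and all other subtrees are single leaves; this data corresponds bijectively to a labeled pair $(F,t)\in\mathcal{P}^{\bullet}\times\mathcal{T}_{\mathcal{P},\mathcal{P}^{\bullet}}$ with $F=\mathsf{blo}_{\{v_i\}}(H)$ and $t$ the subtree attached at the blossom, so the box-product gives the EGF contribution $P^{\bullet}(z)\cdot T(z)$.

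Adding the four contributions yields $T_{\mathrm{not}\oplus}=z+P+P^{\bullet}\cdot T+\exp(T_{\mathrm{not}\ominus})-1-T_{\mathrm{not}\ominus}$. Running the identical case analysis on $\mathcal{T}_{\mathcal{P},\mathcal{P}^{\bullet}}$ itself, where case (D3) may now produce a $\oplus$-rooted tree, gives $T=T_{\mathrm{not}\oplus}+(\exp(T_{\mathrm{not}\oplus})-1-T_{\mathrm{not}\oplus})=\exp(T_{\mathrm{not}\oplus})-1$, which is \eqref{ecu124}. Splitting instead according to whether the root is decorated with $\ominus$ gives the symmetric identity $T=\exp(T_{\mathrm{not}\ominus})-1$. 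Since $\exp$ is injective on formal series with zero constant term, these two identities force $T_{\mathrm{not}\ominus}=T_{\mathrm{not}\oplus}$, establishing \eqref{ecutri}. Substituting $T_{\mathrm{not}\ominus}=T_{\mathrm{not}\oplus}$ and $T=\exp(T_{\mathrm{not}\oplus})-1$ into the summed equation produces \eqref{ecu123}.

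For formal-series uniqueness, I rewrite \eqref{ecu123} as $2T_{\mathrm{not}\oplus}=z+P+(1+P^{\bullet})(\exp(T_{\mathrm{not}\oplus})-1)$. By (C1), $P^{\bullet}$ has no constant term, so $(1+P^{\bullet})(0)=1$; combined with $T_{\mathrm{not}\oplus}(0)=0$, extracting $[z^n]$ on both sides isolates $[z^n]T_{\mathrm{not}\oplus}$ on the right with coefficient $1$, plus an expression depending only on the coefficients $[z^k]T_{\mathrm{not}\oplus}$ with $k<n$. Together with the initial condition $T_{\mathrm{not}\oplus}(0)=0$, this triangular recursion determines every coefficient inductively.

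The main obstacle is justifying the bijection underlying case (D4). Without conditions (C2), (C3), and (C5), distinct triples $(F,t,i)$ could yield the same tree and the product $P^{\bullet}\cdot T$ would overcount; these hypotheses are exactly what \Cref{2.18} was engineered to handle, so the combinatorial verification reduces to quoting it. Everything else is a routine application of the symbolic method.
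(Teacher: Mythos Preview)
Your proof is correct and follows essentially the same strategy as the paper: decompose along cases (D1)--(D4) and translate each into an EGF term. Two minor differences are worth noting. First, the paper establishes $T_{\mathrm{not}\oplus}=T_{\mathrm{not}\ominus}$ at the outset via a combinatorial involution (swap $\oplus\leftrightarrow\ominus$ at every linear node), whereas you deduce it algebraically from $T=\exp(T_{\mathrm{not}\oplus})-1=\exp(T_{\mathrm{not}\ominus})-1$ together with injectivity of $\exp$ on series with zero constant term; the involution is more conceptual, but your argument is perfectly valid. Second, the paper obtains \eqref{ecu124} by writing a separate equation for $T$ (with a $2(\exp(T_{\mathrm{not}\oplus})-1-T_{\mathrm{not}\oplus})$ term coming from both $\oplus$- and $\ominus$-rooted trees in case (D3)) and subtracting, while you get it more directly by splitting $T$ according to whether the root is $\oplus$ or not. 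Your uniqueness argument is also slightly cleaner: the paper invokes that $P^{\bullet}$ has no term of degree $0$ or $1$, but as you observe, only $[z^0]P^{\bullet}=0$ is actually needed once the equation is rewritten as $2T_{\mathrm{not}\oplus}=z+P+(1+P^{\bullet})(\exp(T_{\mathrm{not}\oplus})-1)$.
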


\begin{proof}
Note that there is a natural involution on $\mathcal{T}_{\mathcal{P},\mathcal{P}^{\bullet}}$: the decoration of every linear node can be changed to its opposite: $\oplus$ to $\ominus$, and $\ominus$ to $\oplus$. Therefore $T_{\mathrm{not}\oplus}=T_{\mathrm{not}\ominus}$.

First, we prove that
\begin{align}
  \label{ecu125}
T&=z+T\times P^{\bullet}+P+2\times (\exp(T_{\mathrm{not}\oplus})-1-T_{\mathrm{not}\oplus})
\end{align}

We split the enumeration of the trees  $t\in \mathcal{T}_{\mathcal{P},\mathcal{P}^{\bullet}}$ according to the different cases of \Cref{d1}.

\begin{enumerate}
    \item[(D1)] The tree $t$ is a single leaf (which gives the $z$ in \cref{ecu125}).
    
    \item[(D2)]  The tree $t$ has a root decorated with a graph $H$ belonging to $\mathcal{P}$. The exponential generating function for a fixed $H$ is $\frac{z^{N(H)}}{N(H)!}$. Summing over all $H$ and all $n$ gives the term $P$ in \cref{ecu125}.
   
   \item[(D3)] The tree $t$ has a root $r$ decorated with $\oplus$ and having $k$ children with $k\geq 2$. In this case, the generating function of the set of the $k$ trees of $t_r$ is $\frac{T_{\mathrm{not}\oplus}^k}{k!}$. Summing over all $k$ implies that the exponential generating function of all trees in case $(D3)$ with a root labeled $\oplus$ is $\exp(T_{\mathrm{not}\oplus})-1-T_{\mathrm{not}\oplus}$.

   The tree $t$ can also have a root $r$ decorated with $\ominus$. Since $T_{\mathrm{not}\oplus}=T_{\mathrm{not}\ominus}$, the exponential generating function of all trees in case $(D3)$ with a root labeled $\ominus$ is $\exp(T_{\mathrm{not}\oplus})-1-T_{\mathrm{not}\oplus}$.
    
  \item [(D4)]

   The tree $t$ has a root $r$ decorated with a graph $H$ and there exists $v\in V_H$ such that $\mathsf{blo}_v(H)=W$ where $W\in \mathcal{P}^{\bullet}$. Denote $t'$ the $\ell(v)$-th tree of $t_r$.
 
    The exponential generating function corresponding to the set of leaves in $t\backslash t'$ is $\frac{z^{N(W)}}{N(W)!}$, and the exponential generating function corresponding to $t'$ is $T$. Note that the tree $t$ is uniquely determined by $W$, the labeled product of $t'$ and the set of leaves of $t\backslash t'$. Thus the corresponding generating function for a fixed $W$ is $T\times \frac{z^{N(W)}}{N(W)!}$. Summing over all $W$ and all $n$ gives the term $T\times P^{\bullet}$ in \cref{ecu125}.

\end{enumerate}

Summing all terms gives \cref{ecu125}.

Similarly, we get
\begin{align}
  \label{ecu126}
T_{\mathrm{not} \oplus}&=z+T\times P^{\bullet}+P+ \exp(T_{\mathrm{not}\oplus})-1-T_{\mathrm{not}\oplus}.
\end{align}

Substracting \cref{ecu126} to \cref{ecu125} gives \cref{ecu124}. Then \cref{ecu123} is an easy consequence from \cref{ecu124,ecu126}.\smallskip

Note that \cref{ecu123} can be rewritten as:
\begin{align}
  \label{eq:equation55555}
T_{\mathrm{not}\oplus}&=z+P+\sum\limits_{k\geq 1}\frac{T_{\mathrm{not}\oplus}^k}{k!}P^{\bullet}+\sum\limits_{k\geq 2}\frac{T_{\mathrm{not}\oplus}^k}{k!}.
\end{align}

For every $n\geq 1$, the coefficient of degree $n$ of $T_{\mathrm{not}\oplus}$ only depends on coefficients of lower degree as $P^{\bullet}(z)$ has no term of degree $0$ or $1$ and $T_{\mathrm{not}\oplus}(0)=0$. Thus \cref{ecu123} combined with $T_{\mathrm{not}\oplus}(0)=0$ determines uniquely $T_{\mathrm{not}\oplus}$.\end{proof}

We are going to define the notions of trees with marked leaves, and of blossomed trees, which will be crucial in the next section. We insist on the fact that the size parameter will count the number of leaves \textbf{including the marked ones but not the blossoms}.

\begin{dfn}\label{mark}
A marked tree is a pair $(t,\mathfrak{I})$ where $t$ is a tree and $\mathfrak{I}$ a partial injection from the set of labels of leaves of $t$ to $\N$. The number of marked leaves is the size of the domain of $\mathfrak{I}$ denoted by $|(t,\mathfrak{I})|$, and a leaf is marked if its label $j$ is in the domain, its mark being $\mathfrak{I}(j)$.
\end{dfn}

\begin{rem}
In the following, we will consider marked trees $(t,\mathfrak{I})$, and subtrees $t'$ of $t$. The marked tree $(t',\mathfrak{I})$ will refer to the marked tree $(t',\mathfrak{I}')$ where $\mathfrak{I}'$ is the restriction of $\mathfrak{I}$ to the set of labels of leaves of $t'$.
\end{rem}

\begin{rem}
Let $\mathcal{F}\in\{\mathcal{T}_{\mathcal{P},\mathcal{P}^{\bullet}},\mathcal{T}_{\mathrm{not}\ominus},\mathcal{T}_{\mathrm{not}\oplus}\}$, and $F$ be its generating exponential function. The exponential generating function of trees in $\mathcal{F}$ with a marked leaf is $zF'(z)$: if there are $f_n$ trees of size $n$ in $\mathcal{F}$, there are $nf_n$ trees with a marked leaf. Thus the generating exponential function is $\sum \limits_{n\geq 1}\frac{n f_n}{n!}z^{n}=zF'(z)$.
\end{rem}

\subsubsection*{Blossoming transformation}

Let $t$ be a tree not reduced to a leaf in $\mathcal{T}_{\mathcal{P},\mathcal{P}^{\bullet}}$, $\ell$ a leaf of $t$ and $n$ the parent of $\ell$. If $n$ is a linear node,
we replace the label of $\ell$ by $*$, and do the reduction on $t$.
If $v$ is a non-linear node, and $\ell$ is in the $i$-th tree of $t_n$ (where $i$ is the element such that $(D4)$ holds in \Cref{d1}), we replace the label of $\ell$ by $*$ and $i$ by $*$ in the decoration of $n$, and do the reduction on both $t$ and the decoration of $v$. If $t$ is reduced to a leaf, we replace the leaf by a blossom. We call such this transformation the \emph{blossoming} of $(t,\ell)$.

We extend this operation to internal node: if $n$ is a internal node, we replace $t[n]$ by its leaf of smallest label, and do the blossoming operation on the tree obtained. The resulting tree is still called the \emph{blossoming} of $(t,n)$.

\begin{dfn}[Blossomed tree]\label{dfnblossom}

A \emph{blossomed tree} is a tree that can be obtained by the blossoming of a tree in $\mathcal{T}_{\mathcal{P},\mathcal{P}^{\bullet}}$. Its size is its number of leaves without blossom. 

A blossom is \emph{$\oplus$-replaceable} (resp.~\emph{$\ominus$-replaceable}) if its parent is not decorated with $\oplus$ (resp.~$\ominus$).

\end{dfn}

\begin{rem}
Similarly to a tree, a blossomed tree can be marked by a partial injection $\mathfrak{I}$.
\end{rem}

We will denote $\mathcal{T}^b$ and $\mathcal{T}_a^b$ with $a\in\{\mathrm{not}\oplus,\mathrm{not}\ominus\}$, and $b\in\{\oplus,\ominus,\mathsf{blo}\}$ the set of trees whose root is not $\oplus$ (resp.~$\ominus$) if $a=\mathrm{not}\oplus$ (resp.~$a=\mathrm{not}\ominus$), and with one blossom that is $b$-replaceable if $b=\oplus$ or $\ominus$, or just with one blossom if $b=\mathsf{blo}$.

We define $T^b$ and $T_a^b$ to be the corresponding exponential generating functions of trees, counted by the number of non blossomed leaves.

However, we take the convention that $T_{\mathrm{not}\oplus}^{\oplus}(0)=0=T_{\mathrm{not}\ominus}^{\ominus}$. In other words, a single leaf is neither in $\mathcal{T}_{\mathrm{not}\oplus}^{\oplus}$ nor in $\mathcal{T}_{\mathrm{not}\ominus}^{\ominus}$. The other series have constant coefficient $1$.

\begin{rem}
From the previously defined involution, it follows that $T_{\mathrm{not}\oplus}^{\ominus}=T_{\mathrm{not}\ominus}^{\oplus}$, $T_{\mathrm{not}\oplus}^{\oplus}=T_{\mathrm{not}\ominus}^{\ominus}$ et $T^{\oplus}=T^{\ominus}$ and $T_{\mathrm{not}\oplus}^{\mathsf{blo}}=T_{\mathrm{not}\ominus}^{\mathsf{blo}}$.
\end{rem}

\begin{thm}\label{4.5}
The functions $T^{\oplus}, T_{\mathrm{not}\oplus}^{\oplus}, T_{\mathrm{not}\ominus}^{\oplus}$ are given by the following equations: 

\begin{align}
\label{b1}
&T^{\oplus}=\frac{1}{2-\exp(T_{\mathrm{not}\oplus})-P^{\bullet}\exp(T_{\mathrm{not}\oplus})}\\
\label{b2}
&T_{\mathrm{not}\oplus}^{\ominus}=\frac{T^{\oplus}}{\exp(T_{\mathrm{not}\oplus})}\\
\label{b3}
&T_{\mathrm{not}\oplus}^{\oplus}=\frac{T^{\oplus}-1}{\exp(T_{\mathrm{not}\oplus})}\
\end{align}

\end{thm}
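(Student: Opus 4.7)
The strategy is to extend the root-decomposition argument used for $T_{\mathrm{not}\oplus}$ in the previous theorem to blossomed trees, producing one identity per case and combining them. Throughout write $U := T_{\mathrm{not}\oplus}$ and use the $\oplus\leftrightarrow\ominus$ involution on $\mathcal{T}_{\mathcal{P},\mathcal{P}^{\bullet}}$, which in particular gives $T_{\mathrm{not}\ominus} = T_{\mathrm{not}\oplus}$ and $T_{\mathrm{not}\oplus}^{\ominus} = T_{\mathrm{not}\ominus}^{\oplus}$.

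For \eqref{b2} and \eqref{b3}, I split $\mathcal{T}^{\oplus}$ by whether the root carries decoration $\oplus$ (resp.\ $\ominus$). In the $\oplus$-rooted sub-family, condition $(D3)$ forces all $k\ge 2$ children to be non-$\oplus$-rooted, and the requirement that the blossom's parent is not $\oplus$ excludes the blossomed child being itself the single blossom, so it lies in $\mathcal{T}_{\mathrm{not}\oplus}^{\oplus}$ (constant term $0$); the remaining $k-1\ge 1$ siblings sum to $e^{U}-1$. Adding the "root-not-$\oplus$" contribution $1 + T_{\mathrm{not}\oplus}^{\oplus}$ (single blossom plus non-trivial trees) yields $T^{\oplus} = 1 + T_{\mathrm{not}\oplus}^{\oplus}\,e^{U}$, which is \eqref{b3}. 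The analogous decomposition with $\ominus$ in place of $\oplus$ now permits the blossomed child to be a single blossom (since its parent $\ominus$ is not $\oplus$), so it lies in $\mathcal{T}_{\mathrm{not}\ominus}^{\oplus}$ (constant term $1$); combined with the "root-not-$\ominus$" contribution $T_{\mathrm{not}\ominus}^{\oplus}$ (which already carries the single blossom via its constant coefficient), this yields $T^{\oplus} = T_{\mathrm{not}\ominus}^{\oplus}\,e^{U}$, which rearranges to \eqref{b2} after applying the involution.

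For \eqref{b1}, I further decompose $\mathcal{T}_{\mathrm{not}\oplus}^{\oplus}$ itself. Trees with root $\ominus$ contribute $T_{\mathrm{not}\ominus}^{\oplus}(e^{U}-1)$ as above. For trees with a non-linear root, I first invoke $(C2)$ to exclude any decoration from $\mathcal{P}$ at the root: a hypothetical such tree would require blossoming a leaf child of a $\mathcal{P}$-rooted original tree, which by $(C2)$ yields $\mathsf{blo}_{\{v\}}(H)\notin\mathcal{P}^{\bullet}$, incompatible with a valid blossoming. Every remaining non-linear-root blossomed tree falls into one of two sub-cases, both parameterized by an element $H'\in\mathcal{P}^{\bullet}$: (A) the blossom sits at position $*$ of $H'$ with the other children being labelled leaves, contributing $P^{\bullet}$; or (B) the root decoration is the unique $H$ with $\mathsf{blo}_{\{v_i\}}(H)=H'$, the other children at non-$i$ positions are labelled leaves, and the $i$-th subtree is a non-trivial element of $\mathcal{T}^{\oplus}$ (the single-blossom case being precisely sub-case (A)), contributing $P^{\bullet}(T^{\oplus}-1)$. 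Summing yields a total non-linear contribution $P^{\bullet}\,T^{\oplus}$, so $T_{\mathrm{not}\oplus}^{\oplus} = T_{\mathrm{not}\ominus}^{\oplus}(e^{U}-1) + P^{\bullet}\,T^{\oplus}$.

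Substituting $T_{\mathrm{not}\oplus}^{\oplus} = (T^{\oplus}-1)/e^{U}$ from \eqref{b3} and $T_{\mathrm{not}\ominus}^{\oplus} = T^{\oplus}/e^{U}$ from \eqref{b2}, then multiplying by $e^{U}$ and simplifying produces $2T^{\oplus} = 1 + T^{\oplus}(1+P^{\bullet})e^{U}$, which rearranges to \eqref{b1}. The main obstacle is the non-linear root analysis in the third decomposition: one must carefully verify that sub-cases (A) and (B) are exhaustive and disjoint, and that $(C2)$ really does forbid the otherwise-plausible $\mathcal{P}$-rooted case, so that the enumeration counts each blossomed tree exactly once.
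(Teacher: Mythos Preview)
Your argument is correct and rests on the same root-decomposition idea as the paper. The one organizational difference is that the paper writes three separate recursive identities for $T_{\mathrm{not}\oplus}^{\oplus}$, $T_{\mathrm{not}\oplus}^{\ominus}$ and $T^{\oplus}$ and subtracts them pairwise to isolate \eqref{b2} and \eqref{b3}, whereas you obtain these two more directly by splitting $T^{\oplus}$ according to whether the root is decorated $\oplus$ (respectively $\ominus$); your third decomposition of $T_{\mathrm{not}\oplus}^{\oplus}$ then coincides with the paper's intermediate identity, and the final substitution is the same. One minor remark: your appeal to $(C2)$ to exclude a $\mathcal{P}$-decorated root is valid but not strictly needed, since the paper's blossoming operation at a non-linear parent is by construction only available at the distinguished $(D4)$ child, so a $(D2)$-rooted blossomed tree simply never arises.
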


\begin{figure}[htbp]
\begin{center}
\centering
\includegraphics[scale=0.6]{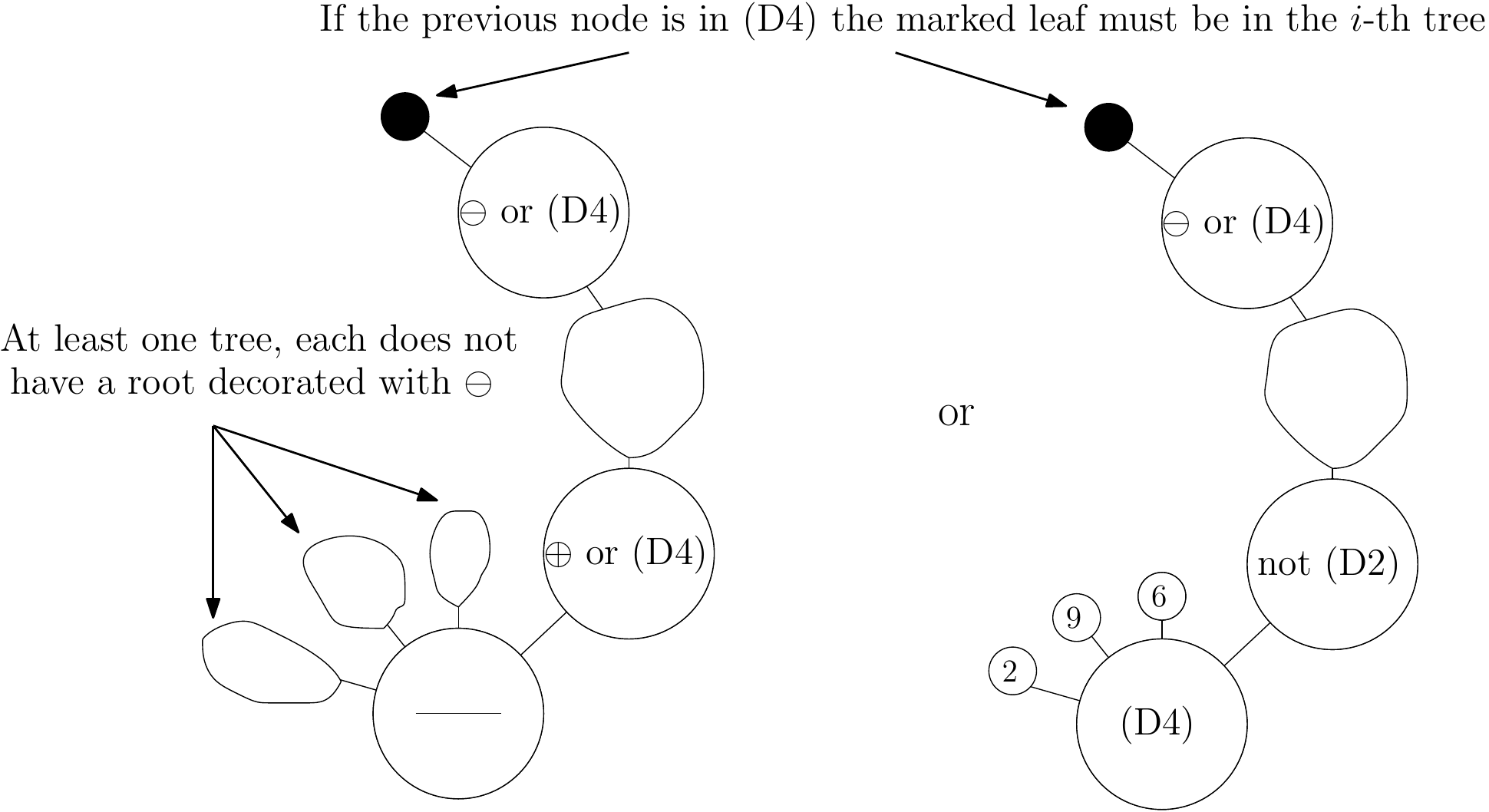}
\caption{Illustration of both cases in the proof of \Cref{4.5}}
\end{center}
\end{figure}

\begin{proof}
Let $t$ be a tree in $\mathcal{T}_{\mathrm{not}\oplus}^{\oplus}$. Note that it cannot be reduced to a single leaf, have a root decorated with $\oplus$ or be in case $(D2)$ of \Cref{d1}.

\begin{enumerate}
    \item[(D3)] The tree $t$ can have a root $r$ decorated with $\ominus$ and having $k$ children with $k\geq 2$. There are $k-1$ subtrees without blossom, and $1$ with a blossom. Thus the generating function of the set of the $k$ trees of $t_r$ is $\frac{T_{\mathrm{not}\ominus}^{k-1}}{(k-1)!}T_{\mathrm{not}\ominus}^{\oplus}$. Summing over all $k$ gives that the exponential generating function of all trees in case $(D3)$ with a root labeled $\ominus$ is $$\sum \limits_{k\geq 2} \frac{T_{\mathrm{not}\ominus}^{k-1}}{(k-1)!}T_{\mathrm{not}\ominus}^{\oplus}=(\exp(T_{\mathrm{not}\ominus})-1)T_{\mathrm{not}\ominus}^{\oplus}$$

     \item[(D4)]

     The tree $t$ can have a root $r$ decorated with $H$ and $v\in V_H$ such that $\mathsf{blo}_v(H)=W$ with $W\in\mathcal{P}^{\bullet}$. Then the blossom must be in the $\ell(v)$-th tree of $t_r$ that will be denoted $t'$.
 
    The exponential generating function corresponding to the set of leaves in $t\backslash t'$ is $\frac{z^{N(W)}}{N(W)!}$, and the exponential generating function corresponding to $t'$ is $T^{\oplus}$. Note that the tree $t$ is uniquely determined by $W$, the labeled product of $t'$ and the set of leaves of $t\backslash t'$.
    Thus the corresponding generating function for a fixed $W$ is $T^{\oplus}\times \frac{z^{N(W)}}{N(W)!}$.
    Summing over all $W$ gives the exponential generating function $T^{\oplus}\times P^{\bullet}$.

\end{enumerate}

This implies the following equation: 
\begin{align}
\label{b4}
&T_{\mathrm{not}\oplus}^{\oplus} =(\exp(T_{\mathrm{not}\ominus})-1)T_{\mathrm{not}\ominus}^{\oplus} +P^{\bullet}T^{\oplus}= (\exp(T_{\mathrm{not}\oplus})-1)T_{\mathrm{not}\oplus}^{\ominus} +P^{\bullet}T^{\oplus}
\end{align}
We have similarly:

\begin{align}
\label{b5}
&T_{\mathrm{not}\oplus}^{\ominus}=1+(\exp(T_{\mathrm{not}\ominus})-1)T_{\mathrm{not}\ominus}^{\ominus}+P^{\bullet}T^{\ominus}=1+(\exp(T_{\mathrm{not}\oplus})-1)T_{\mathrm{not}\oplus}^{\oplus}+P^{\bullet}T^{\oplus}\\
\label{b6666}
&T^{\oplus}=1+(\exp(T_{\mathrm{not}\ominus})-1)T_{\mathrm{not}\ominus}^{\oplus}+(\exp(T_{\mathrm{not}\oplus})-1)T_{\mathrm{not}\oplus}^{\oplus}+P^{\bullet}T^{\oplus}
\end{align}
Thus:
\begin{align}
\label{b6}
    &T^{\oplus}=1+(\exp(T_{\mathrm{not}\oplus})-1)(T_{\mathrm{not}\oplus}^{\oplus}+T_{\mathrm{not}\oplus}^{\ominus})+P^{\bullet}T^{\oplus}
\end{align}

By substracting \cref{b5} to \cref{b6}, we get $T^{\oplus}-T_{\mathrm{not}\oplus}^{\ominus}=(\exp(T_{\mathrm{not}\oplus})-1)T_{\mathrm{not}\oplus}^{\ominus}$ which implies \cref{b2}.

Using \cref{b4,b6}, we get $$T^{\oplus}=1+(\exp(T_{\mathrm{not}\oplus})-1)T_{\mathrm{not}\oplus}^{\oplus}+T_{\mathrm{not}\oplus}^{\oplus}=1+\exp(T_{\mathrm{not}\oplus})T_{\mathrm{not}\oplus}^{\oplus}$$
which implies \cref{b3}.

Multiplying \cref{b4} by $\exp(T_{\mathrm{not}\oplus})$ and using \cref{b3,b2}, it follows that:
$$T^{\oplus}-1=(\exp(T_{\mathrm{not}\oplus})-1)T^{\oplus}+\exp(T_{\mathrm{not}\oplus})P^{\bullet}T^{\oplus}.$$

Thus $T^{\oplus}(2-\exp(T_{\mathrm{not}\oplus})-P^{\bullet}\exp(T_{\mathrm{not}\oplus}))=1$ which implies \cref{b1}.\end{proof}

\begin{thm}\label{4.4}
We also have the following equations: 

\begin{align}
\label{o1}
&T^{\mathsf{blo}}=\frac{\exp(T_{\mathrm{not}\oplus})}{2-\exp(T_{\mathrm{not}\oplus})-P^{\bullet}\exp(T_{\mathrm{not}\oplus})}\\
\label{o2}
&T_{\mathrm{not}\oplus}^{\mathsf{blo}}=\frac{1}{\exp(T_{\mathrm{not}\oplus})}T^{\mathsf{blo}}\
\end{align}

\end{thm}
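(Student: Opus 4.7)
The plan is to prove \cref{o2} by a case analysis on the root of $t \in \mathcal{T}^{\mathsf{blo}}$, and then \cref{o1} by decomposing $\mathcal{T}^{\mathsf{blo}}$ according to whether the blossom is $\oplus$-replaceable, together with a collapse/insertion bijection that invokes \cref{b1} from the previous theorem. For \cref{o2}, split $\mathcal{T}^{\mathsf{blo}}$ depending on whether the root is $\oplus$ or not. Trees whose root is not $\oplus$ are exactly $\mathcal{T}_{\mathrm{not}\oplus}^{\mathsf{blo}}$. For a tree with root $\oplus$, case (D3) of \Cref{d1} forces its $k \geq 2$ children to have non-$\oplus$ roots; exactly one child carries the blossom (counted by $T_{\mathrm{not}\oplus}^{\mathsf{blo}}$) while the others do not (counted by $T_{\mathrm{not}\oplus}$), and the contribution is $\sum_{k \geq 2} \frac{T_{\mathrm{not}\oplus}^{k-1}}{(k-1)!}\, T_{\mathrm{not}\oplus}^{\mathsf{blo}} = (\exp(T_{\mathrm{not}\oplus})-1)\, T_{\mathrm{not}\oplus}^{\mathsf{blo}}$. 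Summing yields $T^{\mathsf{blo}} = T_{\mathrm{not}\oplus}^{\mathsf{blo}} \exp(T_{\mathrm{not}\oplus})$, which rearranges to \cref{o2}.

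For \cref{o1}, I would partition $\mathcal{T}^{\mathsf{blo}}$ according to whether the blossom is $\oplus$-replaceable. The $\oplus$-replaceable part is exactly $\mathcal{T}^{\oplus}$, contributing $T^{\oplus}$. In the complementary case, the blossom has an $\oplus$-parent $p$, and by (D3) the remaining children of $p$ form a non-empty multiset $S$ of trees in $\mathcal{T}_{\mathrm{not}\oplus}$. I set up a bijection $t \leftrightarrow (t', S)$, where $t'$ is obtained by deleting the $\oplus$-node $p$ and placing the blossom in the slot that $p$ occupied in $t$; the inverse inserts a new $\oplus$-node as the parent of the blossom in $t'$ and attaches $S$ as its extra children. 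The crucial point is that $t' \in \mathcal{T}^{\oplus}$: the blossom's new parent in $t'$ is the former parent of $p$ (if any), which must be non-$\oplus$ by the no-nested-$\oplus$ constraint in (D3). This produces the contribution $T^{\oplus}(\exp(T_{\mathrm{not}\oplus})-1)$, so $T^{\mathsf{blo}} = T^{\oplus} + T^{\oplus}(\exp(T_{\mathrm{not}\oplus})-1) = T^{\oplus}\exp(T_{\mathrm{not}\oplus})$; substituting \cref{b1} then yields \cref{o1}.

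The main technical point is verifying the bijection at the boundary cases: when $p$ is the root of $t$, the collapsed tree $t'$ is the single blossom (which lies in $\mathcal{T}^{\oplus}$ by the convention $T^{\oplus}(0)=1$ and $|S| \geq 1$ being automatic since $p$ had at least two children); conversely, the inserted $\oplus$-node in the inverse map either becomes the new root of the reconstructed tree or sits beneath a non-$\oplus$ ancestor, so (D3) is preserved in every case. Beyond this modest bookkeeping, the rest is routine EGF manipulation entirely parallel to what was done for the preceding theorem.
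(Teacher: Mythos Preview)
Your derivation of \cref{o2} is correct and essentially coincides with the paper's. The paper writes down two root--decomposition identities
\[
T^{\mathsf{blo}}=1+2(\exp(T_{\mathrm{not}\oplus})-1)T_{\mathrm{not}\oplus}^{\mathsf{blo}}+P^{\bullet}T^{\mathsf{blo}},\qquad
T_{\mathrm{not}\oplus}^{\mathsf{blo}}=1+(\exp(T_{\mathrm{not}\oplus})-1)T_{\mathrm{not}\oplus}^{\mathsf{blo}}+P^{\bullet}T^{\mathsf{blo}},
\]
and subtracts; you go straight to their difference by splitting $\mathcal{T}^{\mathsf{blo}}$ on whether the root is $\oplus$.

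For \cref{o1} you take a genuinely different route. The paper stays algebraic: it multiplies the second identity above by $\exp(T_{\mathrm{not}\oplus})$, substitutes \cref{o2}, and solves for $T^{\mathsf{blo}}$ without ever invoking \cref{b1}. You instead establish $T^{\mathsf{blo}}=T^{\oplus}\exp(T_{\mathrm{not}\oplus})$ (which the paper only records afterwards as corollary \cref{o3}) by a collapse/insertion bijection at the $\oplus$-parent of the blossom, and then plug in \cref{b1}. Your route is more combinatorial and arguably more transparent about why the factor $\exp(T_{\mathrm{not}\oplus})$ appears; the paper's route is self-contained and does not depend on the previous theorem.

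One point in your bijection needs to be made explicit. When the parent $q$ of $p$ is non-linear (case~(D4)), collapsing $p$ puts the blossom directly under $q$; but by the paper's blossoming convention the decoration of $q$ in a valid element of $\mathcal{T}^{\oplus}$ must then carry a $*$ at the corresponding vertex. Your description (``placing the blossom in the slot that $p$ occupied'') leaves the decoration of $q$ untouched, so the resulting object is not literally in $\mathcal{T}^{\oplus}$. The fix is easy---also relabel the vertex $v_i$ of $\mathrm{dec}(q)$ by $*$ on collapse, and undo this on insertion---and this adjustment is bijective and size-preserving, so your argument goes through. This case is not covered by the boundary discussion you give (which treats only $p$ being the root), so it is worth spelling out.
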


\begin{proof}
By the same techniques used as those of the previous proof, we establish that:

\begin{align}
\label{o5}
T^{\mathsf{blo}}&=1+2(\exp(T_{\mathrm{not}\oplus})-1)T_{\mathrm{not}\oplus}^{\mathsf{blo}}+P^{\bullet}T^{\mathsf{blo}};\\
\label{o6}
T_{\mathrm{not}\oplus}^{\mathsf{blo}}&=1+(\exp(T_{\mathrm{not}\oplus})-1)T_{\mathrm{not}\oplus}^{\mathsf{blo}}+P^{\bullet}T^{\mathsf{blo}}.
\end{align}

By substracting \cref{o6} to \cref{o5}, we get that: $$T^{\mathsf{blo}}-T_{\mathrm{not}\oplus}^{\mathsf{blo}}= (\exp(T_{\mathrm{not}\oplus})-1)T_{\mathrm{not}\oplus}^{\mathsf{blo}}$$ which implies \cref{o2}. 

By multiplying \cref{o6} by $\exp(T_{\mathrm{not}\oplus})$ and using \cref{o2} we get that: 
$$T^{\mathsf{blo}}\left(2-P^{\bullet}\exp(T_{\mathrm{not}\oplus})-\exp(T_{\mathrm{not}\oplus})\right)=\exp(T_{\mathrm{not}\oplus})$$
which implies \cref{o1}.\end{proof}

Combining \cref{4.5} and \cref{4.4} we obtain:
\begin{cor}
We have the following equations: 
\begin{align}
\label{o3}
&T^{\mathsf{blo}}=\exp(T_{\mathrm{not}\oplus})T^{\oplus};\\
\label{o4}
&T_{\mathrm{not}\oplus}^{\mathsf{blo}}=T^{\oplus}.\
\end{align}

\end{cor}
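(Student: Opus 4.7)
The plan is to derive both identities by a direct algebraic comparison of the closed-form expressions supplied by \Cref{4.5} and \Cref{4.4}; no new combinatorial construction is needed.

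First I would observe that \cref{b1} and \cref{o1} have exactly the same denominator, namely $2-\exp(T_{\mathrm{not}\oplus})-P^{\bullet}\exp(T_{\mathrm{not}\oplus})$, while the numerators are $1$ and $\exp(T_{\mathrm{not}\oplus})$ respectively. Dividing the two expressions therefore yields
\[
T^{\mathsf{blo}} \;=\; \exp(T_{\mathrm{not}\oplus})\cdot T^{\oplus},
\]
which is \cref{o3}. Here I am implicitly using that the denominator is a nonzero formal power series (its constant coefficient equals $1$ since $T_{\mathrm{not}\oplus}(0)=0$ and $P^{\bullet}(0)=0$ by $(C1)$), so it is invertible in $\mathbb{Q}[[z]]$ and division makes sense.

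Next I would substitute \cref{o3} into \cref{o2} to obtain
\[
T_{\mathrm{not}\oplus}^{\mathsf{blo}} \;=\; \frac{T^{\mathsf{blo}}}{\exp(T_{\mathrm{not}\oplus})} \;=\; \frac{\exp(T_{\mathrm{not}\oplus})\cdot T^{\oplus}}{\exp(T_{\mathrm{not}\oplus})} \;=\; T^{\oplus},
\]
which is \cref{o4}. Both identities of the corollary then follow at once.

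There is essentially no obstacle: the work has already been done in the proofs of \Cref{4.5} and \Cref{4.4}, and the corollary is a one-line manipulation of the resulting rational expressions in $T_{\mathrm{not}\oplus}$, $\exp(T_{\mathrm{not}\oplus})$, and $P^{\bullet}$. The only thing worth a short comment is the combinatorial interpretation of \cref{o4}: a blossomed tree in $\mathcal{T}_{\mathrm{not}\oplus}^{\mathsf{blo}}$ is counted by exactly the same series as a tree in $\mathcal{T}^{\oplus}$ (that is, with an $\oplus$-replaceable blossom and no constraint on the root decoration), which is a reassuring sanity check one could, if desired, turn into a bijective proof.
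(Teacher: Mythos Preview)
Your proof is correct and follows exactly the approach the paper intends: the corollary is stated immediately after \Cref{4.5} and \Cref{4.4} with the remark ``Combining \cref{4.5} and \cref{4.4} we obtain'', and your derivation---comparing \cref{b1} with \cref{o1} to get \cref{o3}, then substituting into \cref{o2} to get \cref{o4}---is precisely that combination.
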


\subsection{Asymptotic enumeration}\label{sec4.2}
In the following, we derive from the previously obtained equations the radii of the different series introduced, the asymptotic behavior of the different series in $R$ and an equivalent of the number of graphs in $\mathcal{G}_{\mathcal{P},\mathcal{P}^{\bullet}}$

From now on, we assume that $P$ and $P^{\bullet}$ have a positive radius of convergence.
\label{pageP}
Let $R_0$ be the minimum of their radii of convergence. Denote by $P(R_0)$ and $P^{\bullet}(R_0)$ the limit in $[0,+\infty]$ of $P$ and $P^{\bullet}$ at $R_0^-$. 

In the following, we assume that one of the conditions below is verified: 
\begin{itemize}
    \item $P^{\bullet}(R_0)\geq 1$
    \item $R_0+P(R_0)+2\ln(1+P^{\bullet}(R_0))-P^{\bullet}(R_0)>2\ln(2)-1$
\end{itemize}

Note that one of these conditions is verified in the different classes of graphs we study, as $R_0=+\infty$.

Denote by $R$ the only solution in $[0,R_0)$  of the equation: \begin{align}
\label{eqimp}
    R+P(R)+2\ln(1+P^{\bullet}(R))-P^{\bullet}(R)=2\ln(2)-1
    \end{align}
such that $P^{\bullet}(R)<1$  (unicity comes from the fact that $z\mapsto 2\ln(1+z)-z$ is increasing in $[0,1]$). Note that by definition, $0<R<R_0$.

Recall that a formal series $A$ is \emph{aperiodic} if there does not exist two integers $r\geq 0$ and $d\geq 2$ and $B$ a formal series such that $A(z)=z^rB(z^d)$.
\begin{lem}
The functions $T$, $T_{\mathrm{not}\oplus}$, $T^{\oplus}$, $T_{\mathrm{not}\oplus}^{\ominus}$, $T_{\mathrm{not}\oplus}^{\oplus}$, $T^{\mathsf{blo}}$, $T_{\mathrm{not}\oplus}^{\mathsf{blo}}$ are aperiodic.
\end{lem}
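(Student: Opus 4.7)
The plan is to show, for each of the seven series $A$ in the statement, that its support $\{n \in \N : [z^n] A \ne 0\}$ cannot be contained in an arithmetic progression $r + d\N$ with $d \geq 2$. Since all seven series have nonnegative coefficients (they count combinatorial objects without cancellation), it suffices to exhibit indices in the support whose differences have greatest common divisor $1$. Six of the seven series admit an easy direct argument, and the crux will concentrate on the last one, $T_{\mathrm{not}\oplus}^{\ominus}$.

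First I would handle $T_{\mathrm{not}\oplus}$ and $T$. The single-leaf tree gives $[z] T_{\mathrm{not}\oplus} \geq 1$, so $T_{\mathrm{not}\oplus}^k$ has positive coefficient at $z^k$ for every $k \geq 1$, and thus $\exp(T_{\mathrm{not}\oplus})$ has positive coefficient at every $n \geq 0$. Rewriting \cref{ecu123} as $2\, T_{\mathrm{not}\oplus} = z + P + (1 + P^{\bullet})(\exp(T_{\mathrm{not}\oplus}) - 1)$, the right-hand side has positive coefficient at every $n \geq 1$, so $T_{\mathrm{not}\oplus}$ has full support on $\N_{\geq 1}$; the same holds for $T = \exp(T_{\mathrm{not}\oplus}) - 1$, and both are aperiodic.

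For $T^{\oplus}$ I would expand \cref{b1} geometrically as $T^{\oplus} = \sum_{k \geq 0} V^{k}$ with $V = (1 + P^{\bullet}) \exp(T_{\mathrm{not}\oplus}) - 1$; since $V(0) = 0$ and $V$ has positive coefficient at every $n \geq 1$, $T^{\oplus}$ has positive coefficient at every $n \geq 0$. By \cref{o3,o4}, both $T^{\mathsf{blo}} = \exp(T_{\mathrm{not}\oplus}) T^{\oplus}$ and $T_{\mathrm{not}\oplus}^{\mathsf{blo}} = T^{\oplus}$ inherit this full-support property. For $T_{\mathrm{not}\oplus}^{\oplus}$, \cref{b4} reads $T_{\mathrm{not}\oplus}^{\oplus} = (\exp(T_{\mathrm{not}\oplus}) - 1)\, T_{\mathrm{not}\oplus}^{\ominus} + P^{\bullet} T^{\oplus}$, and since $T_{\mathrm{not}\oplus}^{\ominus}$ has constant coefficient $1$, the first summand alone already contributes a positive coefficient at every $n \geq 1$.

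The main obstacle is $T_{\mathrm{not}\oplus}^{\ominus}$. Conditions $(C1)$ and $(C4)$ force $P^{\bullet}$ to contain no term of degree $\leq 1$, and combining this with the previous steps in \cref{b5} yields $[z] T_{\mathrm{not}\oplus}^{\ominus} = 0$, so the ``two consecutive nonzero coefficients'' trick fails. I would therefore build explicit trees: the single blossom gives $[z^{0}] T_{\mathrm{not}\oplus}^{\ominus} = 1$, and for every $n \geq 2$ the blossomed tree whose root is $\ominus$ with two children---a leaf and an $\oplus$-node whose children are $n-1$ leaves and the blossom---is $(\mathcal{P}, \mathcal{P}^{\bullet})$-consistent, has root $\ominus \neq \oplus$, and has its blossom attached to an $\oplus$-node, hence $\ominus$-replaceable. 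This shows $[z^{n}] T_{\mathrm{not}\oplus}^{\ominus} > 0$ for all $n \geq 2$; since $\{0, 2, 3\}$ lies in the support and $\gcd(2, 3) = 1$, $T_{\mathrm{not}\oplus}^{\ominus}$ is aperiodic.
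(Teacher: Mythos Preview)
Your proof is correct. The paper's own argument is much terser: it simply asserts that for each of the seven series the coefficients of degree $3$ and $4$ are positive, and concludes aperiodicity from $\gcd(3,4)=1$. You prove strictly more---full support on $\N_{\geq 1}$ for five of the series, full support on $\N_{\geq 0}$ for $T^{\oplus}=T_{\mathrm{not}\oplus}^{\mathsf{blo}}$ and $T^{\mathsf{blo}}$, and support containing $\{0\}\cup\N_{\geq 2}$ for $T_{\mathrm{not}\oplus}^{\ominus}$---by exploiting the functional equations \cref{ecu123,b1,b4,b5,o3,o4} and an explicit tree construction for the last case. The trade-off is clear: the paper's approach is shorter and self-evidently sufficient once one computes a couple of low-degree coefficients, while your approach is self-contained (no unexplained ``one can easily check'') and yields the stronger support information, at the cost of a longer write-up.
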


\begin{proof}

One can easily check that for each of the previous series, the coefficients of degree $3$ and $4$ are positive, and thus all the series are aperiodic.\end{proof}

\begin{dfn}
A set $\Delta$ is a $\Delta$-domain at $1$ if there exist two positive numbers $R$ and $\frac{\pi}{2}<\phi< \pi$ such that 

$$\Delta=\{z\in\C | |z|\leq R, z\neq 1, |\mathrm{arg}(1-z)|< \phi\}$$

For every $w\in \C^*$, a set is a $\Delta$-domain at $w$ if it is the image of a $\Delta$-domain by the mapping $z\mapsto zw$.
\end{dfn}

\begin{dfn}
A power series $U$ is said to be \emph{$\Delta$-analytic} if it has a positive radius of convergence $\rho$ and there exists a $\Delta$-domain $D$ at $\rho$ such that $U$ has an analytic continuation on $D$.
\end{dfn}

\begin{thm}\label{thm5}
Both $T$ and $T_{\mathrm{not}\oplus}$ have $R$ as radius of convergence and a unique dominant singularity at $R$. They are $\Delta$-analytic. Their asymptotic expansions near $R$ are: 

\begin{align}
\label{ecus}
&T_{\mathrm{not}\oplus}(z)=\ln\left(\frac{2}{1+P^{\bullet}(R)}\right)-\kappa \sqrt{1-\frac{z}{R}}+\mathcal{O}\left(1-\dfrac{z}{R}\right)\\
\label{ecut}
&T(z)=\frac{2}{1+P^{\bullet}(R)}-1-\frac{2}{1+P^{\bullet}(R)}\kappa\sqrt{1-\frac{z}{R}}+\mathcal{O}\left(1-\dfrac{z}{R}\right)
\end{align}

where $\kappa$ is the constant given by:
$$\kappa=\sqrt{R\left(1+P'(R)+\frac{(1-P^{\bullet}(R))(P^{\bullet })'(R)}{1+P^{\bullet}(R)}\right)}$$
\end{thm}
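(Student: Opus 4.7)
The plan is to apply the smooth implicit-function schema from analytic combinatorics (Flajolet and Sedgewick, Theorem~VII.3) to the functional equation \eqref{ecu123}, and then to transfer the singular behavior to $T$ through the identity $T = \exp(T_{\mathrm{not}\oplus}) - 1$.

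First, I rewrite \eqref{ecu123} in fixed-point form $T_{\mathrm{not}\oplus} = G(z, T_{\mathrm{not}\oplus})$ with
$$G(z, u) = \tfrac{1}{2}\bigl(z + P(z) + (1+P^{\bullet}(z))(\mathrm{e}^u - 1)\bigr),$$
which has non-negative coefficients in both variables. The characteristic system $G(\rho,\tau) = \tau$, $G_u(\rho,\tau) = 1$ reduces to $\mathrm{e}^\tau = \frac{2}{1+P^{\bullet}(\rho)}$ together with
$$\rho + P(\rho) + 2\ln(1+P^{\bullet}(\rho)) - P^{\bullet}(\rho) = 2\ln 2 - 1,$$
which is precisely \eqref{eqimp}. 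The standing assumptions on page~\pageref{pageP}, combined with the monotonicity of the left-hand side on the region $\{P^{\bullet}(z) < 1\}$, ensure a unique solution $(R, U_R)$ with $0 < R < R_0$, $P^{\bullet}(R) < 1$, and $U_R = \ln\bigl(2/(1+P^{\bullet}(R))\bigr)$.

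I then verify the remaining hypotheses of the schema. The map $G$ extends analytically to the polydisk $\{|z| < R_0\} \times \mathbb{C}$ which strictly contains $(R, U_R)$; the preceding lemma provides the needed aperiodicity of $T_{\mathrm{not}\oplus}$. A direct computation gives $G_{uu}(R, U_R) = \tfrac{1}{2}(1+P^{\bullet}(R))\mathrm{e}^{U_R} = 1$ and
$$G_z(R, U_R) = \tfrac{1}{2}\left(1 + P'(R) + (P^{\bullet})'(R)\,\frac{1-P^{\bullet}(R)}{1+P^{\bullet}(R)}\right) > 0,$$
where positivity uses $P^{\bullet}(R) < 1$ and non-negativity of $P'$, $(P^{\bullet})'$. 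The schema then gives that $R$ is the unique dominant singularity of $T_{\mathrm{not}\oplus}$, which admits an analytic continuation to a $\Delta$-domain at $R$ and has a square-root singular behavior obtained by solving $G_z(R,U_R)(z-R) + \tfrac{1}{2}G_{uu}(R,U_R)(u-U_R)^2 = o\bigl((z-R)+(u-U_R)^2\bigr)$ for $u - U_R$; selecting the branch with $T_{\mathrm{not}\oplus}(z) \to U_R^{-}$ as $z \to R^{-}$ along the real axis produces the minus sign and constant $\kappa = \sqrt{2R\,G_z(R,U_R)/G_{uu}(R,U_R)}$, which after substitution equals the constant stated in the theorem, establishing \eqref{ecus}.

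Finally, since $\exp$ is entire, $T = \exp(T_{\mathrm{not}\oplus}) - 1$ is $\Delta$-analytic with the same radius of convergence and unique dominant singularity as $T_{\mathrm{not}\oplus}$. Expanding
$$\exp\!\bigl(U_R - \kappa\sqrt{1-z/R} + \mathcal{O}(1-z/R)\bigr) = \mathrm{e}^{U_R}\bigl(1 - \kappa\sqrt{1-z/R}\bigr) + \mathcal{O}(1-z/R)$$
and using $\mathrm{e}^{U_R} = 2/(1+P^{\bullet}(R))$ yields \eqref{ecut}. The most delicate step is verifying that $(R, U_R)$ lies strictly inside the analyticity polydisk of $G$ and that the branch choice is consistent with the combinatorial non-negativity of $T_{\mathrm{not}\oplus}$; this is precisely where the standing assumption on page~\pageref{pageP} enters.
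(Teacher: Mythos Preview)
Your proof is correct and follows essentially the same route as the paper: both apply the smooth implicit-function schema (Flajolet--Sedgewick, Theorem~VII.3) to \eqref{ecu123}, solve the characteristic system to identify $(R,U_R)$ via \eqref{eqimp}, compute $G_z$ and $G_{uu}$ to extract $\kappa$, and then push the expansion through $T=\exp(T_{\mathrm{not}\oplus})-1$. The only difference is cosmetic: you first moved the $-T_{\mathrm{not}\oplus}$ term of \eqref{ecu123} to the left and divided by $2$, obtaining the simplified $G(z,u)=\tfrac12\bigl(z+P(z)+(1+P^{\bullet}(z))(e^u-1)\bigr)$ with $G_{uu}(R,U_R)=1$, whereas the paper keeps the original form with $G_{w,w}(R,U_R)=2$; both yield the same $\kappa$ after substitution into $\sqrt{2R\,G_z/G_{uu}}$.
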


\begin{proof}

We begin with the expansion of $T_{\mathrm{not}\oplus}$ for which we apply the smooth implicit theorem \cite[Theorem VII.3, p.467]{flajolet2009analytic}. Following \cite[Sec VII.4.1]{flajolet2009analytic} we claim that $T_{\mathrm{not}\oplus}$ satisfies the settings of the so-called \emph{smooth implicit-function schema}: $T_{\mathrm{not}\oplus}$ is solution of
$$T=G(z,T),$$
where $G(z,w)=z+P(z)+(\exp(w)-1)P^{\bullet}(z)+(\exp(w)-1-w)$.

The singularity analysis of $T_{\mathrm{not}\oplus}$ will go through the study of the characteristic system:
$$\begin{cases}
&G(r,s)=s,\\
&G_w(r,s)=1
\end{cases}\qquad \text{ with }0<r<R, \ s>0
$$
where $F_x=\frac{\partial F}{\partial x}$.\\
Note that $(r,s)=\left(R,\ln\left(\frac{2}{1+P^{\bullet}(R)}\right)\right)$ is a solution of the characteristic system of $G$ since 
\begin{itemize}
\item $G_w(r,s)=\exp(s)(1+P^{\bullet}(R))-1=2-1=1$
\item $G(r,s)=R+P(R)-P^{\bullet}(R)+\partial_wG(r,s) -s=2\ln(2)-1-2\ln(1+P^{\bullet}(R))+1-s=2s-s=s$
\end{itemize}
Moreover
\begin{itemize}
    \item $G_z(r,s)=1+P'(R)+(\exp(s)-1)(P^{\bullet})'(R)=1+P'(R)+\frac{(1-P^{\bullet}(R))(P^{\bullet})'(R)}{(1+P^{\bullet}(R))}$
    \item $G_{w,w}(r,s)=\exp(s)(1+P^{\bullet}(r))=2$
\end{itemize}

The expansion of $T$ is then a consequence of \cref{ecu124} and of the expansion of $T_{\mathrm{not}\oplus}$.\end{proof}

\begin{cor}
The radius of convergence of $T^{\oplus}$, $T_{\mathrm{not}\oplus}^{\ominus}$, $T_{\mathrm{not}\oplus}^{\oplus}$, $T^{\mathsf{blo}}$, and $T_{\mathrm{not}\oplus}^{\mathsf{blo}}$ is $R$ and $R$ is the unique dominant singularity of these series. They are $\Delta$-analytic and their asymptotic expansions near $R$ are:
\begin{align}
\label{ecuua}
&T^{\oplus}=\frac{1}{2\kappa}\left(1-\frac{z}{R}\right)^{-\frac{1}{2}}+\mathcal{O}(1)\\
\label{ecuub}
&T_{\mathrm{not}\oplus}^{\ominus}=\frac{(1+\mathcal{P}^{\bullet}(R))}{4\kappa}\left(1-\frac{z}{R}\right)^{-\frac{1}{2}}+\mathcal{O}(1)\\
\label{ecuuc}
&T_{\mathrm{not}\oplus}^{\oplus}=\frac{(1+\mathcal{P}^{\bullet}(R))}{4\kappa}\left(1-\frac{z}{R}\right)^{-\frac{1}{2}}+\mathcal{O}(1)\\
\label{eqblo}
&T^{\mathsf{blo}}=\frac{1}{(1+\mathcal{P}^{\bullet}(R))\kappa}\left(1-\frac{z}{R}\right)^{-\frac{1}{2}}+\mathcal{O}(1)\\
\label{ecuud}
&T_{\mathrm{not}\oplus}^{\mathsf{blo}}=\frac{1}{2\kappa}\left(1-\frac{z}{R}\right)^{-\frac{1}{2}}+\mathcal{O}(1)
\end{align}

\end{cor}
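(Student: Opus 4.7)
The plan is to derive each of the five expansions by substituting the asymptotic behavior of $T_{\mathrm{not}\oplus}$ near $R$ from \Cref{thm5} into the algebraic relations established in \Cref{4.5,4.4}. All five series are simple algebraic combinations of $T_{\mathrm{not}\oplus}$, $P$, and $P^{\bullet}$, and since $P$ and $P^{\bullet}$ are analytic at $R$ (because $R<R_0$), the singular behavior is driven entirely by that of $T_{\mathrm{not}\oplus}$.

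I would first treat $T^{\oplus}$, which is the pivotal case. By \eqref{b1} one has $T^{\oplus}=1/\bigl(2-(1+P^{\bullet})\exp(T_{\mathrm{not}\oplus})\bigr)$. The crucial observation is that the defining equation \eqref{eqimp} for $R$, together with the value $T_{\mathrm{not}\oplus}(R)=\ln\bigl(2/(1+P^{\bullet}(R))\bigr)$ from \eqref{ecus}, is equivalent to $(1+P^{\bullet}(R))\exp(T_{\mathrm{not}\oplus}(R))=2$, so the denominator vanishes precisely at $z=R$. Substituting \eqref{ecus} and expanding the exponential, together with the local expansion $P^{\bullet}(z)=P^{\bullet}(R)+O(1-z/R)$, one obtains
\[
2-(1+P^{\bullet}(z))\exp(T_{\mathrm{not}\oplus}(z))=2\kappa\sqrt{1-z/R}+O(1-z/R),
\]
which upon inversion yields \eqref{ecuua}.

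The remaining four expansions follow directly. The relations \eqref{b2}, \eqref{b3}, \eqref{o3}, \eqref{o4} express each of $T_{\mathrm{not}\oplus}^{\ominus}$, $T_{\mathrm{not}\oplus}^{\oplus}$, $T^{\mathsf{blo}}$, $T_{\mathrm{not}\oplus}^{\mathsf{blo}}$ as a product of $T^{\oplus}$ (or $T^{\oplus}-1$) by $\exp(\pm T_{\mathrm{not}\oplus})$. Evaluating the leading factor at $R$ using $\exp(T_{\mathrm{not}\oplus}(R))=2/(1+P^{\bullet}(R))$ produces the constants $\tfrac{1}{2\kappa}$, $\tfrac{(1+P^{\bullet}(R))}{4\kappa}$, and $\tfrac{1}{(1+P^{\bullet}(R))\kappa}$ appearing in \eqref{ecuub}--\eqref{ecuud}. (Note that the $-1$ in \eqref{b3} only contributes to the $O(1)$ error term.)

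The $\Delta$-analyticity and the fact that $R$ is the unique dominant singularity are inherited from \Cref{thm5}: $T_{\mathrm{not}\oplus}$ is $\Delta$-analytic with $R$ as unique singularity of smallest modulus, and the operations (taking exponentials, multiplying, and inverting by a function with an isolated simple zero) preserve $\Delta$-analyticity. The main obstacle here, and the only non-routine point, is to verify that the denominator $2-(1+P^{\bullet}(z))\exp(T_{\mathrm{not}\oplus}(z))$ does not vanish at any other point of the circle $|z|=R$; this follows from a standard aperiodicity/triangle-inequality argument based on the fact that $T_{\mathrm{not}\oplus}$ and $P^{\bullet}$ have non-negative coefficients and $T_{\mathrm{not}\oplus}$ is aperiodic, so equality in $|\exp(T_{\mathrm{not}\oplus}(z))(1+P^{\bullet}(z))|=2$ forces $z=R$.
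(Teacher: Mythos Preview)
Your proof is correct and follows essentially the same route as the paper: expand the denominator $2-(1+P^{\bullet})\exp(T_{\mathrm{not}\oplus})$ via \Cref{thm5}, invoke aperiodicity (the paper cites the Daffodil lemma) to rule out other zeros on $|z|=R$, then read off the remaining expansions from the algebraic relations of \Cref{4.5,4.4}. The only point the paper makes slightly more explicit is the compactness argument ensuring the denominator stays nonzero on a full $\Delta$-domain, which you subsume under ``preserve $\Delta$-analyticity.''
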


\begin{proof}

Note that, if $|z|\leq R$, 
\begin{align*}
    &|(1+P^{\bullet}(z))\exp(T_{\mathrm{not}\oplus}(z))|\leq(1+P^{\bullet}(|z|))\exp(|T_{\mathrm{not}\oplus}(z)|)\leq (1+P^{\bullet}(R))\exp(T_{\mathrm{not}\oplus}(R))=2
\end{align*}
with equality if and only if $z=R$ by aperiodicity from Daffodil lemma \cite[Lemma IV.1]{flajolet2009analytic} and since $T_{\mathrm{not}\oplus}(R)>0$. 

By \cref{thm5} $$2-(1+P^{\bullet}(z))\exp(T_{\mathrm{not}\oplus}(z))=2\kappa \sqrt{1-\frac{z}{R}}+\mathcal{O}\left(1-\frac{z}{R}\right).$$ 

Hence, by compactness, the LHS function can be extended to a $\Delta$-domain $D$ at $R$ with $2-(1+P^{\bullet}(z))\exp(T_{\mathrm{not}\oplus})(z)\neq 0$ for every $z\in D$.

\cref{b1} shows that $T^{\oplus}$ can be extended to $D$ and yields the announced expansions when $z$ tends to $R$. These expansions show that all these series have a radius of convergence exactly equal to $R$.\end{proof}

Applying the Transfer Theorem \cite[Corollary VI.1 p.392]{flajolet2009analytic} to the results of \cref{thm5}, we obtain an equivalent of the number of trees of size $n$ in $\mathcal{T}_{\mathcal{P},\mathcal{P}^{\bullet}}$. Since there is a one-to-one correspondence between graphs in $\mathcal{G}_{\mathcal{P},\mathcal{P}^{\bullet}}$ and trees in $\mathcal{T}_{\mathcal{P},\mathcal{P}^{\bullet}}$, we get the following result:

\begin{cor}\label{cor1}
The number of graphs in $\mathcal{G}_{\mathcal{P},\mathcal{P}^{\bullet}}$ of size $n$ is asymptotically equivalent to 
$$C \frac{n!}{R^nn^{\frac{3}{2}}}\quad \text{where}\quad C=\frac{\kappa}{\sqrt{\pi}(1+P^{\bullet}(R))}.$$
\end{cor}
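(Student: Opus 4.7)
The plan is to reduce the problem to a standard singularity analysis of the generating function $T$. By Proposition \ref{2.18}, the map $t\mapsto \mathrm{Graph}(t)$ is a bijection from $\mathcal{T}_{\mathcal{P},\mathcal{P}^{\bullet}}$ to $\mathcal{G}_{\mathcal{P},\mathcal{P}^{\bullet}}$ that preserves size (since it maps leaves to vertices). Consequently, the number of graphs in $\mathcal{G}_{\mathcal{P},\mathcal{P}^{\bullet}}$ of size $n$ equals the number of trees in $\mathcal{T}_{\mathcal{P},\mathcal{P}^{\bullet}}$ with $n$ leaves, that is $n!\,[z^n]T(z)$.

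The next step is to extract coefficient asymptotics from the singular expansion of $T$. Theorem \ref{thm5} provides exactly the data needed: $T$ is $\Delta$-analytic with unique dominant singularity at $R$, and
$$T(z)=\left(\frac{2}{1+P^{\bullet}(R)}-1\right)-\frac{2\kappa}{1+P^{\bullet}(R)}\sqrt{1-\frac{z}{R}}+\mathcal{O}\!\left(1-\frac{z}{R}\right).$$
The constant term contributes nothing to $[z^n]T(z)$ for $n\geq 1$, and the $\mathcal{O}(1-z/R)$ error term contributes $\mathcal{O}(R^{-n}n^{-2})$, which is negligible compared to the leading $n^{-3/2}$ term we are after.

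I would then apply the Transfer Theorem \cite[Corollary VI.1, p.392]{flajolet2009analytic}, using the standard coefficient asymptotics
$$[z^n]\sqrt{1-z/R}\;\sim\;-\frac{1}{2\sqrt{\pi}}\,\frac{1}{R^n\,n^{3/2}}.$$
Combining this with the singular expansion gives
$$[z^n]T(z)\;\sim\;\frac{\kappa}{\sqrt{\pi}\,(1+P^{\bullet}(R))}\,\frac{1}{R^n\,n^{3/2}}.$$
Multiplying by $n!$ yields the announced equivalent with $C=\frac{\kappa}{\sqrt{\pi}(1+P^{\bullet}(R))}$. There is no real obstacle in this corollary itself: all the analytic work, namely establishing $\Delta$-analyticity, unicity of the dominant singularity, and the square-root singular expansion of $T$, has already been carried out in Theorem \ref{thm5}. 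The present statement is just the transfer of that expansion into an asymptotic count, combined with the bijection of Proposition \ref{2.18}.
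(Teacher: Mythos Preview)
Your proposal is correct and follows exactly the paper's approach: it invokes the bijection of Proposition~\ref{2.18} to identify the graph count with $n!\,[z^n]T(z)$, and then applies the Transfer Theorem of \cite[Corollary VI.1]{flajolet2009analytic} to the singular expansion of $T$ established in Theorem~\ref{thm5}. The paper itself states nothing more than this, so your write-up is in fact more detailed than the original.
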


Here are the numerical approximations of $R$ and $C$ in the different cases: \medskip

\begin{center}
\begin{tabular}{|c|c|c|c|}
  \hline
  class of graph & $R^{-1}$ & $R$ & $C$ \\
  \hline
  $P_4$-tidy  & $2.90405818$ & $0.34434572$ & $0.40883495$ \\
  $P_4$-lite  & $2.90146936$ & $0.34465296$ & $0.40833239$ \\
  $P_4$-extendible  & $2.88492066$ & $0.34662998$ & $
0.40351731$ \\
  $P_4$-sparse & $2.72743550$ & $0.36664478$ & $0.37405701$ \\
  $P_4$-reducible & $2.71715531$ & $0.36803196$ & $0.37115484$ \\
  $P_4$-free & $\frac{1}{2\ln(2)-1}\approx 2.58869945$ & $2\ln(2)-1\approx 0.38629436$ & $0.35065840$ \\
  \hline
\end{tabular}
\end{center}

\section{Enumeration of graphs with a given induced subgraph}\label{sec5}

\subsection{Induced subtrees and subgraphs}
We recall that the size of a graph is its number of vertices, and the size of a tree is its number of leaves.

\begin{dfn}[Induced subgraph]\label{d12}
Let $G$ be a graph, $k$ a positive integer and $\mathfrak{I}$ a partial injection from the set of labels of $G$ to $\N$. The \emph{labeled subgraph $G_\mathfrak{I}$} of $G$ induced by $\mathfrak{I}$ is defined as:
\begin{itemize}
    \item The vertices of $G_\mathfrak{I}$ are the vertices of $G$ whose label $\ell$ is in the domain of $\mathfrak{I}$. For every such vertex, we replace the label $\ell$ of the vertex by $\mathfrak{I}(\ell)$;
    \item For two vertices $v$ and $v'$ of $G_\mathfrak{I}$, $(v,v')$ is an edge of $G_\mathfrak{I}$ if and only if it is an edge of $G$.
\end{itemize}
\end{dfn}

\begin{dfn}[First common ancestor]
Let $t$ be a rooted tree and let $\ell_1,\ell_2$ be two distinct leaves of $t$. The \emph{first common ancestor} of $\ell_1$ and $\ell_2$ is the internal node of $t$ that is the furthest from the root and that belongs to the shortest path from the root to $\ell_1$, and the shortest path from the root to $\ell_2$.
\end{dfn}

\begin{dfn}[Induced subtree]\label{defg}
Let $(t,\mathfrak{I})$ be a marked tree in $\mathcal{T}_0$ ($\mathcal{T}_0$ is defined in \cref{d3}, and the notion of marked tree in \cref{mark}). The \emph{induced subtree $t_\mathfrak{I}$} of $t$ induced by $\mathfrak{I}$ is defined as:
\begin{itemize}
\item The leaves of $t_\mathfrak{I}$ are the leaves of $t$ that are marked. For every such leaf labeled with an integer $\ell$, the new label of $\ell$ is $\mathfrak{I}(\ell)$;

\item The internal nodes of $t_\mathfrak{I}$ are the internal nodes of $t$ that are first common ancestors of two or more leaves of $t_\mathfrak{I}$;

\item The ancestor-descendent relation in $t_\mathfrak{I}$ is inherited from the one in $t$;

\item For every internal node $v$ of $t$ that appears in $t_\mathfrak{I}$, let $H$ be its decoration in $t$. Denote by $J$ the set of positive integers $k$ such that the $k$-th tree of $t_v$ contains a leaf of $t_\mathfrak{I}$. For every $k$ in $J$, we define $\mathfrak{L}(k)$ as the smallest image by $\mathfrak{I}$ of a marked leaf label in the $k$-th tree of $t_v$. The decoration of $v$ in $t_\mathfrak{I}$ is the reduction of $H_{\mathfrak{L}}$.

\end{itemize}

For every internal node $v$ (resp.~leaf $\ell)$ of $t_\mathfrak{I}$, we also define $\phi(v)$ to be the only internal node (resp.~leaf) of $t$ corresponding to $v$.
\end{dfn}

\begin{rem}
When $(t,\mathfrak{I})$ is a marked tree and $t'$ is a subtree of $t$, we will denote $t'_\mathfrak{I}$ the tree induced by the restriction of $\mathfrak{I}$ to the set of labels of leaves of $t'$.
\end{rem}

As a consequence of Definitions \ref{d12} and \ref{defg}, we obtain:

\begin{lem}\label{çasevoit}
Let $(t,\mathfrak{I})$ be a marked tree in $\mathcal{T}_0$. Then $$\mathrm{Graph}(t)_{\mathfrak{I}}=\mathrm{Graph}(t_{\mathfrak{I}}).$$
\end{lem}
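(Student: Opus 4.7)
The plan is to prove the identity by structural induction on the tree $t$. The base case is immediate: if $t$ is a single leaf labeled $\ell$, then $\mathrm{Graph}(t)$ is a single vertex labeled $\ell$. If $\ell$ is in the domain of $\mathfrak{I}$ then both $\mathrm{Graph}(t)_{\mathfrak{I}}$ and $\mathrm{Graph}(t_{\mathfrak{I}})$ are the single vertex labeled $\mathfrak{I}(\ell)$; otherwise both sides are the empty graph.

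For the inductive step, let $r$ be the root of $t$, $H$ its decoration with $k=|H|$, and $t_1,\dots,t_k$ the subtrees attached at $r$. Let $J=\{i\in\{1,\dots,k\}:t_i \text{ contains a marked leaf}\}$, and let $\mathfrak{L}(i)$ for $i\in J$ be as in \Cref{defg}. If $|J|\leq 1$, then the root $r$ is not the first common ancestor of two marked leaves, so $t_{\mathfrak{I}}=(t_i)_{\mathfrak{I}}$ (or is empty if $J=\emptyset$). Combining this with the observation that $\mathrm{Graph}(t)_{\mathfrak{I}}$ only keeps vertices sitting inside $\mathrm{Graph}(t_i)$ (with all inter-block edges forgotten because the other blocks contribute no marked vertex), the induction hypothesis applied to $t_i$ yields the result.

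The interesting case is $|J|\geq 2$. Then $r$ appears in $t_{\mathfrak{I}}$, with decoration the reduction of $H_{\mathfrak{L}}$, and children $(t_i)_{\mathfrak{I}}$ for $i\in J$. Here I would first check that the ordering conventions line up: the convention orders the children of $r$ in $t_{\mathfrak{I}}$ by their minimal leaf-label, which is exactly $\mathfrak{L}(i)$, and the reduction of $H_{\mathfrak{L}}$ relabels the vertex originally at position $i\in J$ by its rank in $\{\mathfrak{L}(j):j\in J\}$. Hence the $p$-th child of $r$ in $t_{\mathfrak{I}}$ is attached to the vertex of label $p$ in the reduced decoration, so that \Cref{d2} may be applied coherently.

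With this correspondence set up, the verification splits into two kinds of edges in $\mathrm{Graph}(t_{\mathfrak{I}})$. Edges both of whose endpoints lie in a single $\mathrm{Graph}((t_i)_{\mathfrak{I}})$ are handled by the induction hypothesis applied to $t_i$ and the definition $\mathrm{Graph}(t)=H[\mathrm{Graph}(t_1),\dots,\mathrm{Graph}(t_k)]$. For cross-edges with endpoints in $\mathrm{Graph}((t_i)_{\mathfrak{I}})$ and $\mathrm{Graph}((t_j)_{\mathfrak{I}})$ with $i\neq j$ in $J$, the edge is present in $\mathrm{Graph}(t_{\mathfrak{I}})$ iff the corresponding vertices of the reduced decoration are adjacent, which by the reduction/restriction construction is iff $\{v_i,v_j\}\in E_H$, and this is precisely the condition for the same two marked vertices to be connected in $\mathrm{Graph}(t)_{\mathfrak{I}}$. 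I expect the main (purely bookkeeping) obstacle to be this last identification of indices through the reduction of $H_{\mathfrak{L}}$, which is why I would set up the correspondence between subtree positions and relabeled vertices before attacking the edge comparison.
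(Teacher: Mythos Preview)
The paper does not actually give a proof of this lemma: it merely records it as ``a consequence of Definitions~\ref{d12} and~\ref{defg}'' and provides an illustrative figure. Your structural induction on $t$ is the natural way to make that claim precise, and the argument is correct. The only genuinely delicate point is the one you already flagged: matching the ordering of the children of $r$ in $t_{\mathfrak{I}}$ (by minimal $\mathfrak{I}$-image, i.e.\ by $\mathfrak{L}(i)$) with the labels $1,\dots,|J|$ produced by reducing $H_{\mathfrak{L}}$, so that the substitution in \Cref{d2} is applied in the right order. You handle this correctly, and once this identification is in place the within-block and cross-block edge checks are straightforward.
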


\begin{figure}[htbp]
\begin{center}
\centering
\includegraphics[scale=0.8]{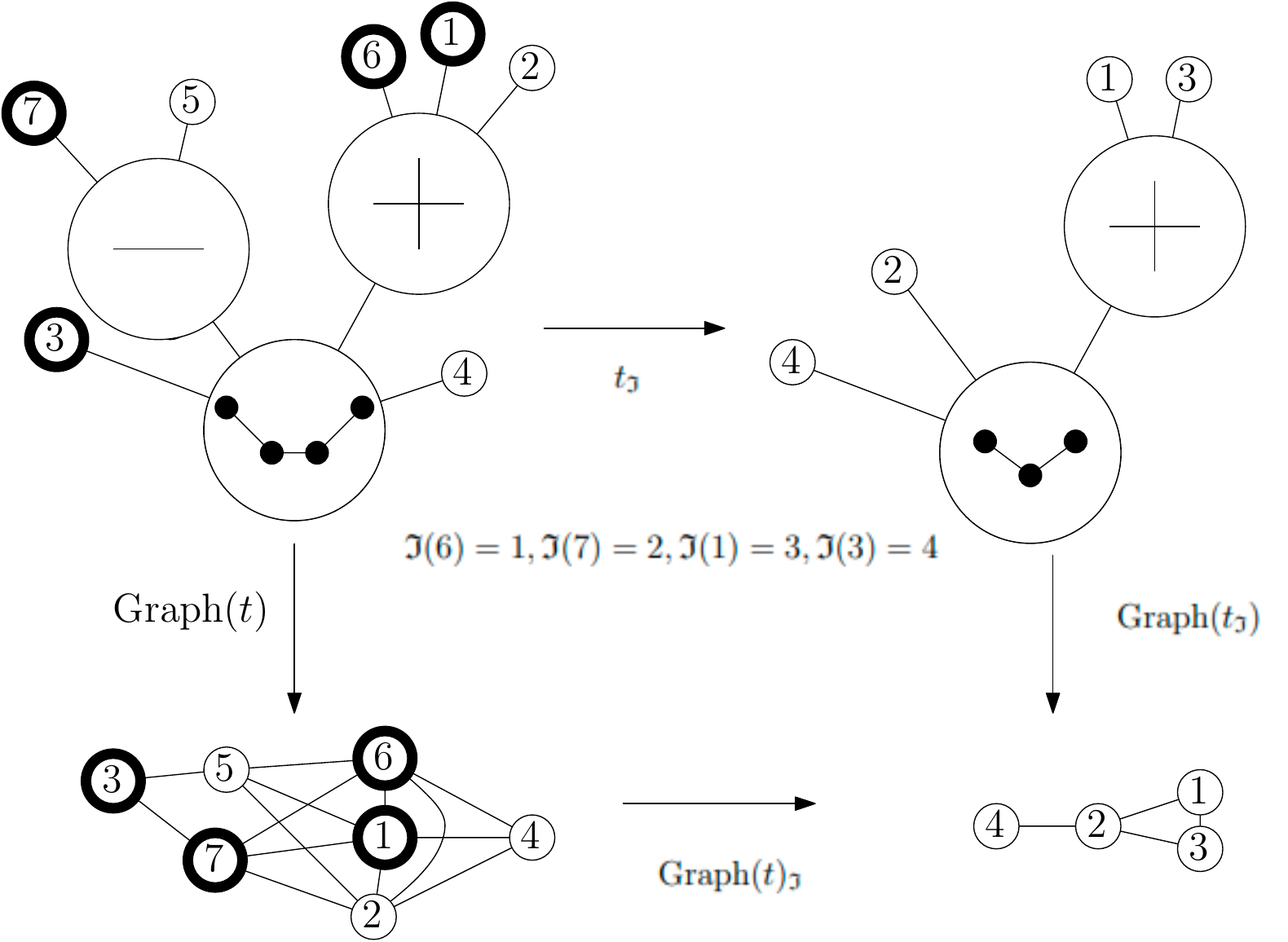}
\caption{Relations between induced subgraph and induced subtree.}
\end{center}
\end{figure}

\begin{dfn}
For every pair of graphs $(G,H)$ such that $G$ has no blossom and $H$ has at most one blossom, let \emph{$\mathrm{Occ}_{G}(H)$} be the number of partial injection $\mathfrak{I}$ from the vertex labels of $H$ to $\N$ such that no blossom is marked and $H_\mathfrak{I}$ is isomorphic to $G$.

\end{dfn}

\begin{dfn}
For every pair of graphs $(G,H)$ and $a\in \N$ such that $G$ has no blossom, $H$ has exactly one blossom and $a$ is the label of a vertex of $G$, let \emph{$\mathrm{Occ}_{G,a}(H)$} be the number of partial injection $\mathfrak{I}$ from the vertex labels of $G$ to $\N$ such that the image of the blossom by $\mathfrak{I}$ is $a$ and $H_\mathfrak{I}$ is isomorphic to $G$.

\end{dfn}

\begin{figure}[htbp]
\begin{center}
\centering
\includegraphics[scale=0.6]{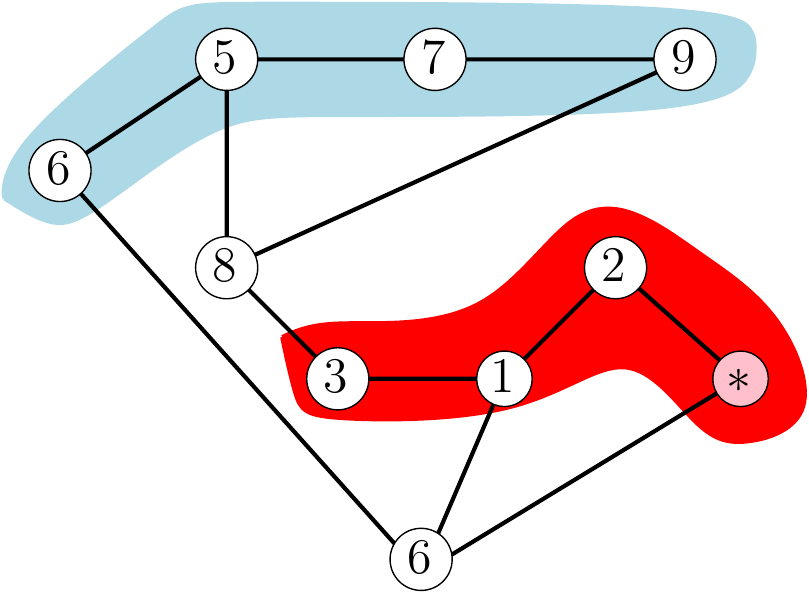}
\caption{Two occurences of a $P_4$ in a blossomed graph $H$. If $G$ is a $P_4$, the blue one is counted twice in $\mathrm{Occ}_{G}(H)$, the red one in counted once in $\mathrm{Occ}_{G,a}(H)$ iff $a$ is the label of an extremity of $G$.}
\end{center}
\end{figure}

\begin{dfn}
For every graph $G$ without blossom, and every $a\in\{1,\dots, N(G)=|G|\}$, set:
\begin{align*}
&\mathrm{Occ}_{G, \mathcal{P}}(z):=\sum\limits_{H\in \mathcal{P}}\frac{\mathrm{Occ}_{G}(H)z^{N(H)-N(G)}}{N(H)!};\qquad  \mathrm{Occ}_{G, \mathcal{P}^{\bullet}}(z):=\sum\limits_{H\in \mathcal{P}^{\bullet}}\frac{\mathrm{Occ}_{G}(H)z^{N(H)-N(G)}}{N(H)!}\\
&\mathrm{Occ}_{G,a, \mathcal{P}^{\bullet}}(z):=\sum\limits_{H\in \mathcal{P}^{\bullet}}\frac{\mathrm{Occ}_{G,a}(H)z^{N(H)-N(G)+1}}{N(H)!}
\end{align*}

\end{dfn}

\begin{nota}
$\mathrm{Occ}_{G,\dots}$ will only be used for graphs $G$ with no blossom.
\end{nota}

\begin{prop}
For every $k\geq 1$ and every $a\in\{1,\dots, k\}$:
\begin{align}
  \label{e1}
&\sum\limits_{G: \ N(G)=k}\mathrm{Occ}_{G,\mathcal{P}}(z)=P^{(k)}(z)\\
\label{e2}
&\sum\limits_{G: \ N(G)=k}\mathrm{Occ}_{G,\mathcal{P}^{\bullet}}(z)=(P^{\bullet})^{(k)}(z)\\
\label{e3}
&\sum\limits_{G:\ N(G)=k}\mathrm{Occ}_{G,a,\mathcal{P}^{\bullet}}(z)=(P^{\bullet})^{(k-1)}(z)
\end{align}

Thus for every graph $G$ with no blossom and every $a\in \{1,\dots,N(G)\}$, $\mathrm{Occ}_{G,\mathcal{P}}$, $\mathrm{Occ}_{G,\mathcal{P}^{\bullet}}$ and $\mathrm{Occ}_{G,a,\mathcal{P}^{\bullet}}$ have a radius of convergence strictly greater than $R$, the radius of convergence of $T$.
\end{prop}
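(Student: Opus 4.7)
The three identities all follow from a single double-counting idea combined with term-wise differentiation. First, observe that term-by-term,
$$
P^{(k)}(z)=\sum_{H\in\mathcal{P},\,N(H)\geq k}\frac{z^{N(H)-k}}{(N(H)-k)!},
$$
and analogously for $(P^{\bullet})^{(k)}$ and $(P^{\bullet})^{(k-1)}$. The plan is to show that $\sum_{G:\,N(G)=k}\mathrm{Occ}_G(H)=N(H)!/(N(H)-k)!$ for every $H$, and similarly for the sums attached to the blossomed cases, after which the three identities follow by simply exchanging the order of summation.

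To prove this combinatorial identity for a fixed $H\in\mathcal{P}$, I interpret $H_{\mathfrak{I}}$ as a labeled graph via Definition~\ref{d12}. A labeled graph $G$ with $N(G)=k$ carries the label set $\{1,\dots,k\}$, so for $H_{\mathfrak{I}}$ to be isomorphic as a labeled graph to such a $G$, the image of the partial injection $\mathfrak{I}$ must be exactly $\{1,\dots,k\}$, and in that case $H_{\mathfrak{I}}$ is the unique such $G$ it equals. Summing $\mathrm{Occ}_G(H)$ over all labeled $G$ with $N(G)=k$ thus counts the number of injections from a $k$-subset of the $N(H)$ non-blossom labels of $H$ onto $\{1,\dots,k\}$, which is $\binom{N(H)}{k}k!=N(H)!/(N(H)-k)!$. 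Dividing by $N(H)!$, multiplying by $z^{N(H)-k}$ and summing over $H\in\mathcal{P}$ yields \eqref{e1}, and the same argument with $\mathcal{P}$ replaced by $\mathcal{P}^{\bullet}$ gives \eqref{e2}. For \eqref{e3} the only change is that the blossom of $H$ must be in the domain of $\mathfrak{I}$ with $\mathfrak{I}(*)=a\in\{1,\dots,k\}$, so the remaining part of $\mathfrak{I}$ bijects a $(k-1)$-subset of the non-blossom labels of $H$ onto $\{1,\dots,k\}\setminus\{a\}$, producing $N(H)!/(N(H)-k+1)!$ injections and therefore $(P^{\bullet})^{(k-1)}(z)$.

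The radius of convergence claim is then immediate: by the standing assumption $P$ and $P^{\bullet}$ have radius of convergence at least $R_0>R$, so the same holds for each of their derivatives. Each of $\mathrm{Occ}_{G,\mathcal{P}}$, $\mathrm{Occ}_{G,\mathcal{P}^{\bullet}}$, $\mathrm{Occ}_{G,a,\mathcal{P}^{\bullet}}$ has non-negative coefficients and appears as a single summand in one of the identities just proved, so its coefficients are bounded coefficient-wise by those of the corresponding derivative of $P$ or $P^{\bullet}$; its radius of convergence is therefore at least $R_0>R$. No real obstacle is expected here; the only mild point of care is the label bookkeeping in the definition of $\mathrm{Occ}_G(H)$ and $\mathrm{Occ}_{G,a}(H)$, namely that treating $H_{\mathfrak{I}}$ as a labeled graph forces the image of $\mathfrak{I}$ to be exactly the canonical label set of $G$, which is precisely what makes the combinatorial factor collapse to the clean expression $N(H)!/(N(H)-k)!$.
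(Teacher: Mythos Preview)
Your proof is correct and follows essentially the same approach as the paper: exchange the order of summation, observe that for fixed $H$ the sum $\sum_{G:N(G)=k}\mathrm{Occ}_G(H)$ counts all partial injections from the non-blossom labels of $H$ onto $\{1,\dots,k\}$ (with the blossom forced to $a$ in the third case), and then deduce the radius-of-convergence bound from non-negativity and coefficient-wise domination by a derivative of $P$ or $P^{\bullet}$. Your write-up is slightly more explicit about the label bookkeeping, but the argument is the same.
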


\begin{proof}
Let $H$ be an element of $\mathcal{P}$. Since there are $\frac{N(H)!}{(N(H)-k)!}$ choices of partial injection whose image is $\{1,\dots, k\}$, we have:
$$\sum\limits_{G:\ N(G)=k}\mathrm{Occ}_{G,\mathcal{P}}(z)=\sum \limits_{H\in \mathcal{P}}\sum\limits_{G: \ N(G)=k}\frac{\mathrm{Occ}_{G}(H)z^{N(H)-k}}{N(H)!}=\sum \limits_{H\in \mathcal{P}}\frac{z^{N(H)-k}}{(N(H)-k)!}=P^{(k)}(z)$$

The proofs of \cref{e2,e3} are similar. In \cref{e3}, since $\mathfrak{I}^{-1}(a)$ must be $*$, there are exactly $\frac{N(H)!}{(N(H)-(k-1))!}$ choices for the partial injection.

For every graph $G$, $\mathrm{Occ}_{G,\mathcal{P}}$ has non-negative coefficients and for every $k\geq 0$, as mentioned in \Cref{sec4.2}, $P^{(k)}$ has a radius of convergence at least $R_0$, the minimum of the radii of convergence $P$ and $P^{\bullet}$, which is greater than $R$. This implies that $\mathrm{Occ}_{G,\mathcal{P}}$ has a radius of convergence greater than $R$. The proof for the other series is similar.\end{proof}

\subsection{Enumerations of trees with a given induced subtree}

The key step in the proof of our main theorem is to compute the limiting probability (when $n\to \infty$) that a uniform induced subtree of a uniform tree in $\mathcal{T}_{\mathcal{P},\mathcal{P}^{\bullet}}$ with $n$ leaves is a given substitution tree. 

In the following, let $\tau\in \mathcal{T}_0$ be a fixed substitution tree of size at least $2$.

\begin{dfn}
We define \emph{$\mathcal{T}_{\tau}$} to be the set of marked trees $(t,\mathfrak{I})$ where $t\in \mathcal{T}_{\mathcal{P},\mathcal{P}^{\bullet}}$ and $\mathfrak{I}$ is such that $t_\mathfrak{I}$ is isomorphic to $\tau$. We also define $T_{\tau}$ to be the corresponding exponential generating function (where the size parameter is the total number of leaves, including the marked ones).
\end{dfn}
 The aim now is to decompose a tree admitting $\tau$ as a subtree in smaller trees. Let $(t,\mathfrak{I})$ be in $\mathcal{T}_{\tau}$. A prime node $v$ of $\tau$ is such that $t[\phi(v)]$ is either in case $(D2)$ or $(D4)$ of \cref{d1}: in other word, $\phi(v)$ must be a prime node. In constrast, knowing that an internal node $v'$ of $\tau$ is decorated with $\oplus$ or $\ominus$ does not give any information about the decoration of $\phi(v')$.

In order to state \cref{thmcoeur} below, we need to partition the internal nodes of $\tau$:

\begin{dfn}
Let $(t,\mathfrak{I})$ be in $\mathcal{T}_{\tau}$. We denote by $\mathbf{V}(t,\mathfrak{I})$ the set of internal nodes $v$ of $\tau$ such that $\phi(v)$ is non-linear. The set $\mathbf{V}(t,\mathfrak{I})$ can be partitioned in $4$ subsets: 
\begin{itemize}
    \item $\mathbf{V_0}(t,\mathfrak{I})$ the set of internal nodes $v$ such that $t[\phi(v)]$ is in case $(D2)$;
    \item $\mathbf{V_{1}}(t,\mathfrak{I})$ the set of internal nodes $v$ such that $t[\phi(v)]$ is in case $(D4)$ and no marked leaf is in the $i$-th tree of $t_{\phi(v)}$  (where $i$ is the element such that $(D4)$ holds in \Cref{d1});
    \item $\mathbf{V_{2}}(t,\mathfrak{I})$ the set of internal nodes $v$ such that $t[\phi(v)]$ is in case $(D4)$ and exactly one marked leaf is in the $i$-th tree of $t_{\phi(v)}$  (where $i$ is the element such that $(D4)$ holds in \Cref{d1});
    \item $\mathbf{V_{3}}(t,\mathfrak{I})$ the set of internal nodes $v$ such that $t[\phi(v)]$ is in case $(D4)$ and at least two marked leaves are in the $i$-th tree of $t_{\phi(v)}$  (where $i$ is the element such that $(D4)$ holds in \Cref{d1}).
    
\end{itemize}
\end{dfn}

Note that the set of non-linear nodes of $\tau$ must be included in $\mathbf{V}(t,\mathfrak{I})$. Since for every element $v$ of $\mathbf{V}(t,\mathfrak{I})$ at most one element of $t_{\phi(v)}$ is non trivial, at most one element of $\tau_v$ is non trivial. Thus if $\tau$ has some non-linear nodes $v$ such that two or more elements of $\tau_v$ are not reduced to a single leaf, $\mathcal{T}_{\tau}=\emptyset$. In the following, we assume that it is not the case for $\tau$. If $\tau_v$ has exactly one non trivial tree, then $v\in \mathbf{V_{3}}(t,\mathfrak{I})$. Otherwise, $\tau_v$ is a union of leaves.

\begin{nota}
We denote by $U_0$ (resp.~$U_1$) the set of internal nodes $v$ of $\tau$ such that no tree (resp.~exactly one tree) of $\tau_v$ has size greater or equal to $2$. 

\end{nota}

Note that by definition $\mathbf{V_0}(t,\mathfrak{I})\cup \mathbf{V_1}(t,\mathfrak{I})\cup \mathbf{V_2}(t,\mathfrak{I})\subset U_0$ and $\mathbf{V_3}(t,\mathfrak{I})\subset U_1$.

We also define $\mathbf{rk}_{t,\mathfrak{I}}: \mathbf{V_{2}}(t,\mathfrak{I})\mapsto \N$ as follows. Let $v\in \mathbf{V_{2}}(t,\mathfrak{I})$, we define $\mathbf{rk}_{t,\mathfrak{I}}(v)$ to be the only integer $k$ such that, if $\ell$ is the label of the $k$-th leaf of $\tau_v$ then the leaf of label $\mathfrak{I}^{-1}(\ell)$ in $t$ belongs to the $i$-th tree of $t_{\phi(v)}$  (where $i$ is the element such that $(D4)$ holds in \Cref{d1}). For every $v\in \mathbf{V_{2}}(t,\mathfrak{I})$, we have $1\leq \mathbf{rk}_{t,\mathfrak{I}}(v)\leq |\tau_v|$.

\begin{thm}\label{thmcoeur}
Let $\tau$ be a substitution tree of size at least $2$ such that every non-linear node of $\tau$ is in $U_0\cup U_1$. Let $V_0, V_{1}$ and $V_{2}$ be three disjoint subsets of $U_0$ and let $V_{3}$ be a subset of $U_1$ such that every non-linear node of $\tau$ is in $\Vs:= V_0\cup V_{1}\cup  V_{2}\cup V_{3}$. Let $\mathrm{rk}$: $V_{2}\to \N$ be such that $1\leq \mathrm{rk}(w)\leq |\tau_w|$ for every $w\in V_{2}$. 

Let $\mathcal{T}_{\tau,V_0, V_{1}, V_{2}, V_{3},\mathrm{rk}}$ be the set of marked trees $(t,\mathfrak{I})$ in $\mathcal{T}_{\tau}$ such that $\mathbf{V_0}(t,\mathfrak{I})=V_0,\mathbf{V_{1}}(t,\mathfrak{I})=V_{1},\mathbf{V_{2}}(t,\mathfrak{I})=V_{2}, \mathbf{V_{3}}(t,\mathfrak{I})=V_{3}, \mathbf{rk}_{t,\mathfrak{I}}=\mathrm{rk}$, and let $T_{\tau,V_0,V_{1}, V_{2}, V_{3},\mathrm{rk}}$ be its exponential generating function.

Then
\begin{align*}
T_{\tau,V_0,V_{1}, V_{2}, V_{3},\mathrm{rk}}=z^{|\tau|}&T^{\mathrm{root}}\left(T_{\mathrm{not}\oplus}^{\oplus}\right)^{d_{=}}\left(T_{\mathrm{not}\oplus}^{\ominus}\right)^{d_{\neq }}\left(T_{\mathrm{not}\oplus}^{\mathsf{blo}}\right)^{d_{\overline{\Vs}\to\Vs}}\left(T_{\mathrm{not}\oplus}^{'}\right)^{d_{\overline{\Vs}\to \ell}}\exp(n_L T_{\mathrm{not}\oplus})\\
&\times T^{|V_{1}|}T'^{|V_{2}|}(T^{\oplus})^{n_1}(T^{\mathsf{blo}})^{n_2}F
\end{align*}
where 
$$F:=\prod\limits_{v\in V_0}\mathrm{Occ}_{\mathrm{dec}(v),\mathcal{P}}\prod\limits_{v\in V_{3}}\mathrm{Occ}_{\mathrm{dec}(v),\mathrm{br}(v),\mathcal{P}^{\bullet}}\prod\limits_{v\in V_{1}}\mathrm{Occ}_{\mathrm{dec}(v),\mathcal{P}^{\bullet}}\prod\limits_{v\in V_{2}}\mathrm{Occ}_{\mathrm{dec}(v),\mathrm{rk}(v),\mathcal{P}^{\bullet}}$$
and:
\begin{itemize}
    \item $d_{=}$ is the number of edges between two internal nodes not in \Vsr with the same decoration ($\oplus$ and $\oplus$, or $\ominus$ and $\ominus$);
    \item $d_{\neq }$ is the number of edges between two internal nodes not in \Vsr decorated with different decorations ($\oplus$ and $\ominus$);
    \item $d_{\overline{\Vs}\to\Vs}$ is the number of edges between an internal node not belonging to \Vsr and one of its children belonging to \Vsr;
    \item $d_{\overline{\Vs}\to \ell}$ is the number of edges between an internal node not in \Vsr and a leaf;
    \item $n_L$ is the number of internal nodes not in \Vsr;
    \item $\mathrm{dec}(v)$ is the decoration of $v$;
    \item for every $v\in V_3$, $\mathrm{br}(v)$ is the position of the tree of $\tau_v$ not reduced to a leaf;
    \item $n_1$ (resp.~$n_2$) is the number of internal nodes $v$ in $V_{3}$ such that the root of the $\mathrm{br}(v)$-th tree of $\tau_v$ is not in \Vsr (resp.~is in \Vsr);
    \item $T^{\mathrm{root}}=T^{\oplus}$ if the root of $\tau$ is not in \Vsr, $T^{\mathrm{root}}=T^{\mathsf{blo}}$ otherwise.
\end{itemize}

\end{thm}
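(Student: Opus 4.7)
The plan is to set up a canonical bijective decomposition of each marked tree $(t,\mathfrak{I})\in\mathcal{T}_{\tau,V_0,V_1,V_2,V_3,\mathrm{rk}}$ into independent combinatorial pieces, then apply the labeled product rule of the symbolic method. I would decompose $t$ into: (i) the \emph{top piece}, obtained by replacing the subtree $t[\phi(v_0)]$ at the $\phi$-image of the root $v_0$ of $\tau$ by a blossom---this is a blossomed tree whose blossom must be $\oplus$- or $\ominus$-replaceable when $v_0$ is linear (otherwise two adjacent same-decoration linear nodes would appear in $t$), contributing the factor $T^{\mathrm{root}}$; (ii) for each edge $(w,v)$ of $\tau$ with $w$ the parent, an \emph{edge piece}, namely the subtree of $t$ rooted at the direct child of $\phi(w)$ on the path to $\phi(v)$ with $\phi(v)$ (or $\phi(\ell)$ when $v=\ell$ is a leaf of $\tau$) replaced by a blossom, whose generating function is one of $T_{\mathrm{not}\oplus}^\oplus$, $T_{\mathrm{not}\oplus}^\ominus$, $T_{\mathrm{not}\oplus}^{\mathsf{blo}}$, $T_{\mathrm{not}\oplus}'$, $T^\oplus$, or $T^{\mathsf{blo}}$ according to the decorations at its endpoints; (iii) for each $v\in\Vs$, the \emph{local graph data} at $\phi(v)$---the decoration together with the embedding of the marked children---collected into $F$ via the appropriate $\mathrm{Occ}$-series; (iv) for each $v\in V_1$ (resp.\ $V_2$), the \emph{$i$-th subtree} of $\phi(v)$ containing no (resp.\ exactly one) marked leaf, contributing $T$ (resp.\ $T'$); and (v) for each linear internal node $v$ of $\tau$, the multiset of \emph{extra children} of $\phi(v)$ not lying on the path to any $\phi$-image of a child of $v$, contributing $\exp(T_{\mathrm{not}\oplus})$ by the involution $T_{\mathrm{not}\oplus}=T_{\mathrm{not}\ominus}$.

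I would then verify that this is a bijection. Uniqueness follows because each piece is canonically extracted from $(t,\mathfrak{I})$ via the definition of $\phi$ and the modular-decomposition structure of $\mathcal{T}_{\mathcal{P},\mathcal{P}^\bullet}$. Conversely, given a compatible tuple of pieces, one reconstructs $t$ by starting from $\tau$, replacing each $\phi(v)$ for $v\in\Vs$ by the prescribed decoration (together with its unmarked leaves), splicing the edge pieces along every edge of $\tau$, attaching the extra $i$-th subtrees (for $V_1,V_2$) and extra children (for linear nodes), and finally placing the top piece above $\phi(v_0)$. The $\oplus$- and $\ominus$-replaceability constraints on blossoms recorded in (i) and (ii) ensure that no illegal adjacency (two identically-decorated linear nodes in a row) arises in the reconstructed tree, so that it is indeed an element of $\mathcal{T}_{\mathcal{P},\mathcal{P}^\bullet}$. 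The case $v\in V_3$ is handled by observing that the $i$-th subtree of $\phi(v)$ is \emph{precisely} the edge piece for the edge from $v$ to its $\mathrm{br}(v)$-th child $w$ in $\tau$; this explains why there is no separate $T$- or $T'$-factor for $V_3$, the corresponding edge factor being $T^\oplus$ if $w$ is linear (contributing to $n_1$) and $T^{\mathsf{blo}}$ if $w$ is non-linear (contributing to $n_2$).

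Applying the labeled product rule then gives that the EGF of $\mathcal{T}_{\tau,V_0,V_1,V_2,V_3,\mathrm{rk}}$ equals the product of the EGFs of the pieces, with the prefactor $z^{|\tau|}$ accounting for the $|\tau|$ marked leaves whose labels do not enter the size count of any piece. Collecting (i)--(v) together with $F$ yields exactly the claimed formula.

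The main delicate step will be identifying the correct generating function at each edge, in particular proving that for edges ending at a leaf of $\tau$ the right series is $T_{\mathrm{not}\oplus}'$ (and similarly $T'$ for the $V_2$ factor): this relies on the canonical bijection between distinct blossomed trees of size $m$ in $\mathcal{T}_{\mathrm{not}\oplus}$ obtained by blossoming a leaf and trees of size $m+1$ in $\mathcal{T}_{\mathrm{not}\oplus}$, the latter being counted by the shifted coefficients of $T_{\mathrm{not}\oplus}'(z)$. A further care is required to correctly track the $\oplus$-/$\ominus$-replaceability constraint on blossoms at both endpoints of each edge piece (reflecting the $(D3)$ prohibition), which gives rise to the case distinction between $T_{\mathrm{not}\oplus}^\oplus$, $T_{\mathrm{not}\oplus}^\ominus$, and $T_{\mathrm{not}\oplus}^{\mathsf{blo}}$ in the formula.
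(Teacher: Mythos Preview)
Your proposal is correct and follows essentially the same bijective decomposition as the paper: a top piece above $\phi(r_0)$, edge pieces along the edges of $\tau$, local $\mathrm{Occ}$-data at nodes of $\mathcal{V}$, extra $i$-th subtrees at nodes of $V_1\cup V_2$, and the multiset of unmarked children at nodes outside $\mathcal{V}$, all combined via the labeled product rule. Two minor wording points to tighten before writing it up: in (ii) edge pieces should be defined only for edges whose parent lies outside $V_0\cup V_1\cup V_2$ (the leaf-children of such nodes are directly attached to $\phi(w)$ in $t$ and are already absorbed into the $\mathrm{Occ}$-factors of (iii)); and in (v) the correct index set is ``internal nodes not in $\mathcal{V}$'' rather than ``linear internal nodes'', since a linear node of $\tau$ may well lie in $\mathcal{V}$ when its $\phi$-image in $t$ is non-linear.
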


\begin{figure}
\begin{center}
\centering
\includegraphics[width=15cm]{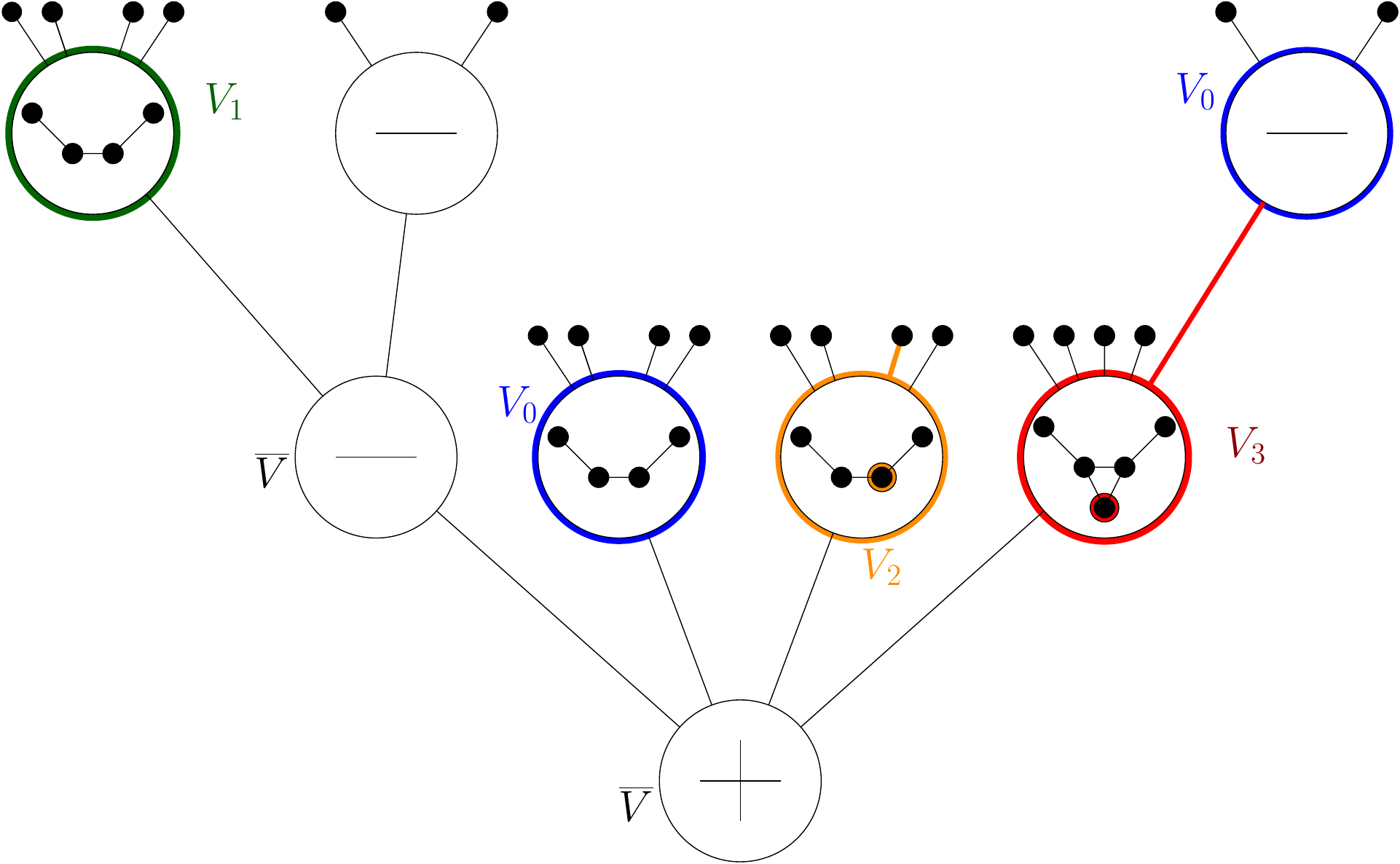}
\caption{A substitution tree $\tau$ and corresponding sets $V_0,V_1,V_2,V_3$}\label{fig13}
\end{center}
\end{figure}

\begin{figure}[htbp]
\begin{center}
\centering
\includegraphics[width=16cm]{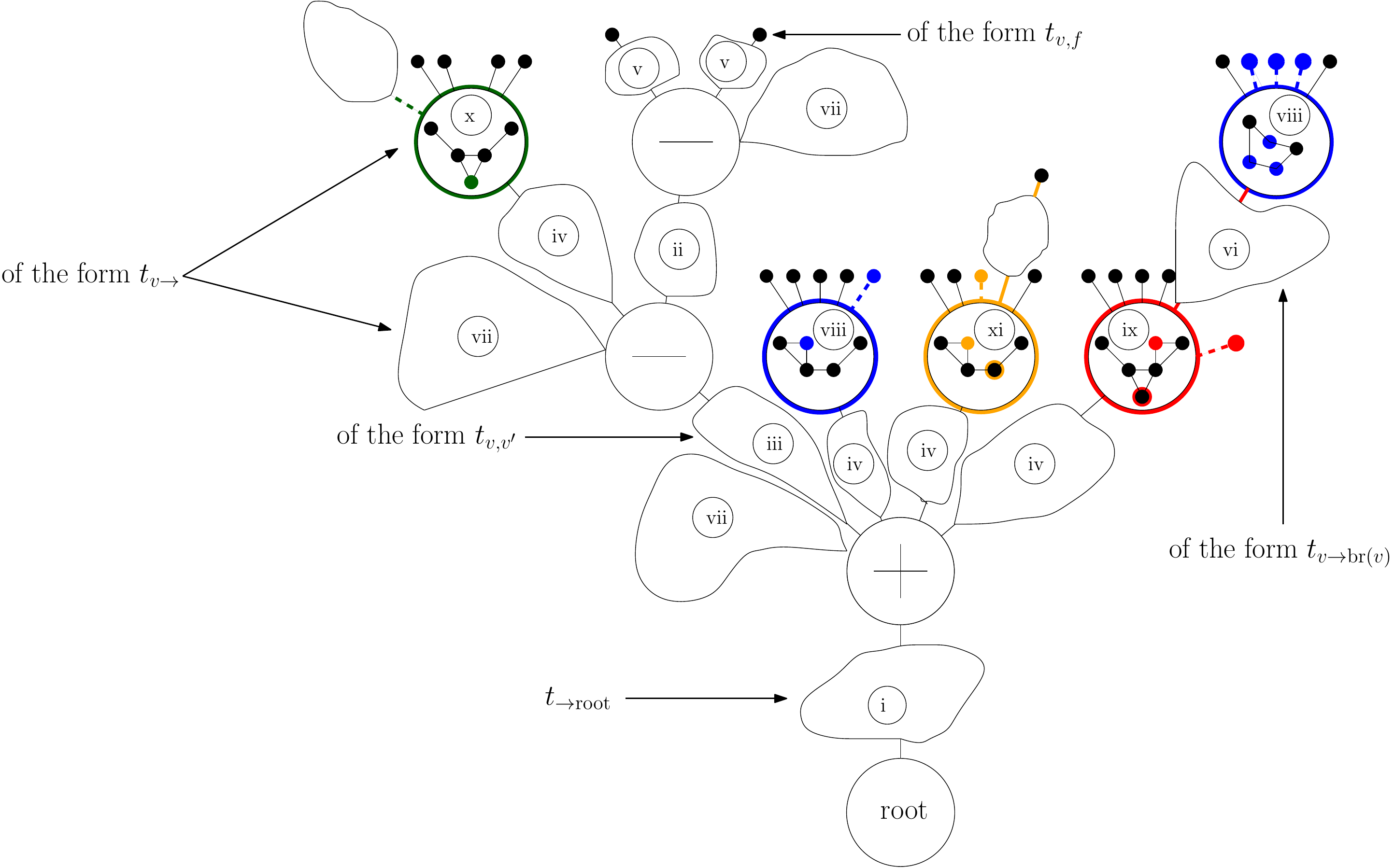}
\caption{The decomposition of a tree admitting the graph $\tau$ of \cref{fig13} as an induced tree. Roman numerals correspond to the different cases of the proof of \Cref{thmcoeur}.}
\end{center}
\end{figure}

\begin{proof}
Let $t$ be a tree in $\mathcal{T}_{\tau,V_0,V_{1}, V_{2}, V_{3},\mathrm{rk}}$.
We decompose $t$ into several disjoints subtrees. The blossoms are nodes where (the root of) an other tree will be glued (and thus they are not counted in the generating series, to avoid counting them twice). In the following, every defined tree will be assumed to be reduced.

\noindent We define $t_{\to\mathrm{root}}$ to be the tree $t$ blossomed at $\phi(r_0)$, where $r_0$ is the root of $\tau$.

\noindent For each $v\in V_{\tau}$, we define the tree $t_{v\to}$ in the following way: 

\begin{itemize}
    \item  If $v$ is not in \Vsr,  $t_{v\to}$ is the subtree of $t$ containing $\phi(v)$ and all the trees of $t_{\phi(v)}$ that do not contain a marked leaf of $t$.
    \item If $v$ is in $V_0\cup V_{1}\cup V_{2}$, $t_{v\to}$ is the tree $t[\phi(v)]$. 
    \item If $v$ is in $V_{3}$, $t_{v\to}$ is the tree $t[\phi(v)]$ obtained after blossoming the root of the non trivial tree of $t_{\phi(v)}$. The blossom is marked with the smallest mark in the non trivial tree of $t_{\phi(v)}$.
\end{itemize}

\noindent For every internal nodes $v,v'$ in $\tau$ such that $v$ is not in \Vsr and $v'$ is a child of $v$, let $t_{v\to v'}$ be the unique tree of $t_{\phi(v)}$ containing $\phi(v')$, blossomed at $\phi(v')$.

\noindent For every internal node $v$ in $\tau$ not in \Vsr, and every leaf $f$ which is a child of $v$ in $\tau$, we define $t_{v\to f}$ to be the tree of $t_{\phi(v)}$ containing $\phi(f)$.

\noindent For every internal node $v$ in $V_{3}$, we define $t_{v\to \mathrm{br}(v)}$ to be the non trivial tree of $t_{\phi(v)}$ blossomed at $\phi(v')$, where $v'$ is the root of the $\mathrm{br}(v)$-th tree of $\tau_v$.

Now we need to analyze the properties of the trees that appear in this decomposition and compute the corresponding exponential generating function. In the rest of the proof, we will say abusively that every blossomed tree belongs to $\mathcal{T}_{\mathcal{P},\mathcal{P}^{\bullet}}$, and that two nodes both decorated with $\oplus$ or $\ominus$ have the same decoration, even if they do not have the same number of children.

\noindent{\bf (i): analysis of $t_{\to\mathrm{root}}$}

The tree $t_{\to\mathrm{root}}$ is a tree in $\mathcal{T}_{\mathcal{P},\mathcal{P}^{\bullet}}$, it has no marked leaf and a unique blossom. If the root is not in \Vsr and decorated with $\oplus$ (resp.~$\ominus$), the blossom is $\oplus$-replaceable (see \Cref{dfnblossom}) (resp.~$\ominus$-replaceable). If the root is in \Vsr, the blossom is replaceable.

The corresponding exponential generating function is equal to $T^{\oplus}$ if the root is not in \Vsr and equal to $T^{\mathsf{blo}}$ otherwise.

\noindent{\bf (ii): analysis of $t_{v\to v'}$ where $v\not\in \Vs$ and $v'$ is a child of $v$ not in \Vsr with the same decoration}

The tree $t_{v\to v'}$ is a tree in $\mathcal{T}_{\mathcal{P},\mathcal{P}^{\bullet}}$ whose root is not decorated with the same decoration as $v$ and with one blossom $\oplus$-replaceable if $v'$ is decorated with $\oplus$, $\ominus$-replaceable otherwise and no marked leaf. 

The exponential generating function of such trees is either $T_{\mathrm{not}\oplus}^{\oplus}$ if both nodes are decorated with $\oplus$ or $T_{\mathrm{not}\ominus}^{\ominus}$ if both nodes are decorated with $\ominus$, which are both equal.

\noindent{\bf (iii): analysis of $t_{v\to v'}$ where $v\not\in \Vs$ and $v'$ is a child of $v$ not in \Vsr with a different decoration}

The tree $t_{v\to v'}$ is a tree in $\mathcal{T}_{\mathcal{P},\mathcal{P}^{\bullet}}$ whose root is not decorated with the same decoration as $v$ and with one blossom $\oplus$-replaceable if $v'$ is decorated with $\oplus$, $\ominus$-replaceable otherwise and no marked leaf.

The exponential generating function of such trees is either $T_{\mathrm{not}\oplus}^{\ominus}$ if $v$ is decorated with $\oplus$ and $v'$ with $\ominus$ or $T_{\mathrm{not}\ominus}^{\oplus}$ if $v$ is decorated with $\ominus$ and $v'$ with $\oplus$, which are both equal.

\noindent{\bf (iv): analysis of $t_{v \to v'}$ where $v\not\in \Vs$ and $v'$ is a child of $v$ in \Vsr}

The tree $t_{v\to v'}$ is a tree in $\mathcal{T}_{\mathcal{P},\mathcal{P}^{\bullet}}$ whose root is not decorated with the decoration of $v$ with one blossom and no marked leaf. 

The corresponding exponential generating function is $T_{\mathrm{not}\oplus}^{\mathsf{blo}}$.

\noindent{\bf (v): analysis of $t_{v\to f}$ where $v\not\in \Vs$ and $f$ is a leaf which is a child of $v$}

The tree $t_{v\to f}$ is a tree in $\mathcal{T}_{\mathcal{P},\mathcal{P}^{\bullet}}$ whose root is not decorated with the decoration of $v$ with one marked leaf and no blossom.

The corresponding exponential generating function is $zT_{\mathrm{not}\oplus}'$.

\noindent{\bf (vi): analysis of $t_{v\to \mathrm{br}(v)}$ where $v\in V_{3}$ }

The tree $t_{v\to \mathrm{br}(v)}$ is a tree with a blossom that is replaceable if the root of the $\mathrm{br}(v)$-th subtree of $t[v]$ is in \Vsr, $\oplus$-replaceable (resp.~$\ominus$-replaceable) if the root is not in \Vsr and labeled $\oplus$ (resp.~$\ominus$), with no marked leaf.

The corresponding exponential generating function is equal to $T^{\oplus}$ if the root of the $\mathrm{br}(v)$-th tree of $\tau_v$ is not in \Vsr and equal to $T^{\mathsf{blo}}$ otherwise.

\noindent{\bf (vii): analysis of $t_{v\to }$ where $v\not\in \Vs$ }

The tree $t_{v\to}$ is a tree whose root denoted is decorated with the same decoration as $v$, who has no marked leaf and no blossom. It verifies all the conditions of being $(\mathcal{P},\mathcal{P}^{\bullet})$-consistent, except that the root can have $0$ or $1$ child.

The corresponding exponential generating function is $\sum\limits_{k\geq 0}T_{\mathrm{not}\oplus}^k=\exp(T_{\mathrm{not}\oplus})$.

\noindent{\bf (viii): analysis of $t_{v\to}$ where $v\in V_0$ }

The tree $t_{v\to}$ is a tree in $\mathcal{T}_{\mathcal{P},\mathcal{P}^{\bullet}}$ whose root is decorated with an element of $\mathcal{P}$. The subtree induced by the marked leaves of $t_{v\to}$ is $\tau[v]$. Moreover $t_{v\to}$ has only one internal node.

 The corresponding exponential generating function is $$\sum\limits_{H\in \mathcal{P}}\frac{\mathrm{Occ}_{\mathrm{dec}(v)}(H)z^{N(H)}}{N(H)!}=z^{N(\mathrm{dec}(v))}\mathrm{Occ}_{\mathrm{dec}(v),\mathcal{P}}.$$
Indeed, for a given $H\in\mathcal{P}$, the term $\frac{z^{N(H)}}{N(H)!}$ correspond to the set of leaves and the term $\mathrm{Occ}_{\mathrm{dec}(v)}(H)$ to the possible markings.

\noindent{\bf (ix): analysis of $t_{v\to}$ where $v\in V_{3}$ }

The tree $t_{v\to}$ is a tree $(\mathcal{P},\mathcal{P}^{\bullet})$-consistent in case $(D4)$ of \cref{d1}. The subtree induced by the marked leaves of $t_{v\to}$ is $\tau[v]$, where the non-trivial tree of $\tau_v$ is replaced by a blossom, marked with the smallest mark in the non-trivial tree of $\tau_v$. Moreover $t_{v\to}$ has only one internal node.

Similarly to case (viii), the corresponding exponential generating function is: $$\sum\limits_{H\in \mathcal{P}^{\bullet}}\frac{\mathrm{Occ}_{\mathrm{dec}(v), \mathrm{br}(v)}(H)z^{N(H)}}{N(H)!}=z^{N(\mathrm{dec}(v))-1}\mathrm{Occ}_{\mathrm{dec}(v),\mathrm{rk}(v),\mathcal{P}^{\bullet}}.$$

\noindent{\bf (x): analysis of $t_{v\to}$ where $v\in V_{1}$ }

The tree $t_{v\to}$ is a tree $(\mathcal{P},\mathcal{P}^{\bullet})$-consistent in case $(D4)$ of \cref{d1}. The subtree induced by the marked leaves of $t_{v\to}$ is $\tau[v]$ and no marked leaf belongs to the $i$-th tree of $t_{\phi(v)}$ (where $i$ is the element such that $(D4)$ holds in \Cref{d1}).

The corresponding exponential generating function is: $$\sum\limits_{H\in \mathcal{P}^{\bullet}}\frac{\mathrm{Occ}_{\mathrm{dec}(v)}(H)z^{N(H)}}{N(H)!}\times T=z^{N(\mathrm{dec}(v))}\mathrm{Occ}_{\mathrm{dec}(v),\mathcal{P}^{\bullet}}\times T.$$

The sum corresponds to the choice of the root (as in the previous cases), and the factor $T$ to the potential non trivial tree of $t_v$.

\noindent{\bf (xi): analysis of $t_{v\to}$ where $v\in V_{2}$ }

The tree $t_{v\to}$ is a tree $(\mathcal{P},\mathcal{P}^{\bullet})$-consistent in case $(D4)$ of \cref{d1}. The subtree induced by the marked leaves of $t_{v\to}$ is $\tau[v]$ and there is only one marked leaf $\ell$ in the $i$-th tree of $t_{\phi(v)}$ (where $i$ is the element such that $(D4)$ holds in \Cref{d1}).
Moreover, if we denote by $j$ the label of $\ell$, the label of the $\mathrm{rk}(v)$-th leaf of $\tau_v$ is $\mathfrak{I}(j)$.

Similarly to case (x), the corresponding exponential generating function is:
$$\sum\limits_{H\in \mathcal{P}^{\bullet}}\frac{\mathrm{Occ}_{\mathrm{dec}(v), \mathrm{rk}(v)}(H)z^{N(H)}}{N(H)!}\times zT'=z^{N(\mathrm{dec}(v))}\mathrm{Occ}_{\mathrm{dec}(v),\mathrm{rk}(v),\mathcal{P}^{\bullet}}\times T'.$$\medskip

All these conditions ensure that we can recover $t$ by gluing all the different trees and that the subtree of $t$ induced by $\mathfrak{I}$ is $\tau$. Thus, $T_{\tau,V_0,V_1,V_2,V_3,\mathrm{rk}}$ is the product of the generating functions and this concludes the proof of the theorem.

\end{proof}

Using \cref{ecut,ecus,eqblo,ecuua,ecuub,ecuuc,ecuud} and Singular Differentiation \cite[Theorem VI.8 p.419]{flajolet2009analytic} we get the following corollary:
\begin{cor}
The series $T_{\tau,V_0,V_1,V_2,V_3,\mathrm{rk}}$ has radius at least $R$, is $\Delta$-analytic and its asymptotic expansion near $R$ is: 
$$T_{\tau,V_0,V_1,V_2,V_3,\mathrm{rk}}=C_{\tau,V_0,V_1,V_2,V_3,\mathrm{rk}}\left(1-\frac{z}{R}\right)^{\beta}\left(1+o(1)\right)$$
where
$$C_{\tau,V_0,V_1,V_2,V_3,\mathrm{rk}}:=\alpha  \kappa^{\gamma}(1+P^{\bullet}(R))^{\theta}(1-P^{\bullet}(R))^{|V_{1}|} 2^{\lambda}R^{\mu}\times F(R)$$

with 
\begin{align*}
\beta&=-\frac{1+d_=+d_{\neq}+d_{\overline{\Vs}\to\Vs}+d_{\overline{\Vs}\to \ell}+|V_{2}|+|V_{3}|}{2}\\
    \gamma&=d_{\overline{\Vs}\to \ell}+|V_{2}|-d_=-d_{\neq}-d_{\overline{\Vs}\to\Vs}-|V_{3}|-1\\
    \theta&=d_=+d_{\neq}-|V_{1}|-|V_{2}|-n_2-n_L\\
    \lambda &=-d_{\overline{\Vs}\to \ell}-n_1-2d_=-2d_{\neq}-d_{\overline{\Vs}\to\Vs}+n_L\\
    \mu &=-d_{\overline{\Vs}\to \ell}-|V_{2}|+|\tau|
\end{align*}

and $\alpha=\frac{1}{2}$ if the root is not in \Vsr, $\frac{1}{1+P^{\bullet}(\kappa)}$ otherwise.

\end{cor}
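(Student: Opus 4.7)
The plan is to assemble the explicit product formula of \cref{thmcoeur} factor by factor, feeding in the singular expansions of each constituent series established in \cref{thm5} and the subsequent corollary. For the two primed series $T'$ and $T_{\mathrm{not}\oplus}'$, I would invoke Singular Differentiation \cite[Theorem VI.8 p.419]{flajolet2009analytic} applied to \cref{ecut,ecus}, yielding
$$T'(z)=\frac{\kappa}{R(1+P^{\bullet}(R))}\left(1-\frac{z}{R}\right)^{-1/2}+\mathcal{O}(1),\qquad T_{\mathrm{not}\oplus}'(z)=\frac{\kappa}{2R}\left(1-\frac{z}{R}\right)^{-1/2}+\mathcal{O}(1).$$
Since $T_{\mathrm{not}\oplus}(R)=\ln(2/(1+P^{\bullet}(R)))$, the factor $\exp(n_L T_{\mathrm{not}\oplus}(z))$ converges to $(2/(1+P^{\bullet}(R)))^{n_L}$ as $z\to R$. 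Finally, $F(z)$ is a finite product of $\mathrm{Occ}$ series, each of which, by the proposition at the beginning of \cref{sec5}, has radius of convergence strictly greater than $R$, so $F$ is analytic at $R$ and simply contributes $F(R)$.

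The bulk of the argument is then careful bookkeeping. Every factor of the form $c(1-z/R)^{-1/2}+\mathcal{O}(1)$ with $c\neq 0$ contributes $-1/2$ to the exponent of $(1-z/R)$: one contribution from $T^{\mathrm{root}}$, $d_=$ from $T^{\oplus}_{\mathrm{not}\oplus}$, $d_{\neq}$ from $T^{\ominus}_{\mathrm{not}\oplus}$, $d_{\overline{\Vs}\to\Vs}$ from $T^{\mathsf{blo}}_{\mathrm{not}\oplus}$, $d_{\overline{\Vs}\to\ell}$ from $T_{\mathrm{not}\oplus}'$, $|V_2|$ from $T'$, and $n_1+n_2=|V_3|$ from $(T^{\oplus})^{n_1}(T^{\mathsf{blo}})^{n_2}$. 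The remaining factors $z^{|\tau|}$, $T^{|V_1|}$, $\exp(n_L T_{\mathrm{not}\oplus})$ and $F$ are bounded at $R$ and feed only into the multiplicative prefactor. Summing these contributions yields the advertised value of $\beta$. The $\Delta$-analyticity of the product will follow since every singular factor is $\Delta$-analytic with dominant singularity at $R$ (by the corollary of \cref{thm5}), while the bounded factors are analytic on a disk strictly containing $\{|z|\leq R\}$.

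The constant $C_{\tau,V_0,V_1,V_2,V_3,\mathrm{rk}}$ is then read off by collecting the prefactors: powers of $\kappa$ sum to $\gamma$ (each unprimed singular factor contributes $\kappa^{-1}$ per occurrence, while $T'$ and $T_{\mathrm{not}\oplus}'$ each contribute $\kappa^{+1}$); powers of $(1+P^{\bullet}(R))$ sum to $\theta$, with the identity $T(R)=(1-P^{\bullet}(R))/(1+P^{\bullet}(R))$ justifying the separate factor $(1-P^{\bullet}(R))^{|V_1|}$; powers of $2$ sum to $\lambda$; and powers of $R$ to $\mu$ ($z^{|\tau|}$ evaluating to $R^{|\tau|}$, each primed series contributing an $R^{-1}$). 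Finally, separating the two possibilities $T^{\mathrm{root}}=T^{\oplus}$ (with leading constant $\tfrac{1}{2\kappa}$) and $T^{\mathrm{root}}=T^{\mathsf{blo}}$ (with leading constant $\tfrac{1}{(1+P^{\bullet}(R))\kappa}$) produces the dichotomy in $\alpha$. The only non-routine ingredient is Singular Differentiation for the two primed series; beyond that the argument is mechanical accounting, and I do not foresee any substantial obstacle.
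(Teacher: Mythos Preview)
Your proposal is correct and follows exactly the approach the paper takes: the paper's own proof is a one-line invocation of \cref{ecut,ecus,eqblo,ecuua,ecuub,ecuuc,ecuud} together with Singular Differentiation, and you have simply spelled out the bookkeeping behind that invocation. Your computations of the primed expansions, of $T(R)=(1-P^{\bullet}(R))/(1+P^{\bullet}(R))$, of $\exp(T_{\mathrm{not}\oplus}(R))=2/(1+P^{\bullet}(R))$, and of the exponent $\beta$ via $n_1+n_2=|V_3|$ are all correct.
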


\section{Proof of the main theorems}\label{sec6}
\subsection{Background on graphons}

We now review the necessary material on graphons. We refer the reader to \cite{lovasz} for a comprehensive presentation of deterministic graphons, while \cite{janson} studies specifically the convergence of random graphs in the sense of graphons. Here we will only recall the properties needed to prove the convergence of random graphs toward the Brownian cographon (see \cite{bassino2021random}).

\begin{dfn}\label{Graphon}
A graphon is an equivalence class of symmetric functions $f:[0,1]^2\mapsto [0,1]$, under the equivalence relation $\sim$, where $f\sim g$ if there exists a measurable function $\phi:[0,1]\mapsto[0,1]$ that is invertible and measure preserving such that, for almost every $(x,y)\in[0,1]^2$, $f(\phi(x),\phi(y))=g(x,y)$. We denote by $\tilde{\mathcal{W}}$ the set of graphons.
\end{dfn}

Intuitively graphons can be seen as continuous analogous of graph adjacency matrices, where graphs are considered  up to relabeling (hence the quotient by $\sim$). There is a natural way to embed a finite graph into graphons:

\begin{dfn}\label{graphe-graphon}
Let $G$ be a (random) graph of size $n$. We define the (random) graphon $W_G$ to be the equivalence class of $w_G:[0,1]^2\mapsto[0,1]$ defined by:
\begin{align*}
    \forall(x,y)\in[0,1]^2\quad w_G(x,y)&:=1_{\lceil nx\rceil \text{connected to}\lceil ny\rceil}
\end{align*}
\end{dfn}

There exists a metric $\delta_{\square}$ on the set of graphons $\tilde{\mathcal{W}}$ such that $(\tilde{\mathcal{W}},\delta_{\square})$ is compact \cite[Chapter 8]{lovasz}, thus we can define for $\delta_{\square}$ the convergence in distribution of a random graphon. If $(\mathbf{G}^{(n)})_{n\geq 1}$ is a sequence of random graphs, there exists a simple criterion \cite[Theorem 3.1]{janson} characterizing the convergence in distribution of $(W_{\mathbf{G}^{(n)}})$ with respect to $\delta_{\square}$:

\begin{thm}[Rephrasing of \cite{janson}, Theorem $3.1$]
\label{Portmanteau}
	For any $n$, let $\mathbf{G}^{(n)}$ be a random graph of size $n$. Denote by $W_{\mathbf{G}^{(n)}}$ the random graphon associated to ${\mathbf{G}^{(n)}}$.
	The following assertions are equivalent:
	\begin{enumerate}
		\item [(a)] The sequence of random graphons $(W_{\mathbf{G}^{(n)}})_{n\geq 1}$ converges in distribution to some random graphon $\mathbf{W}$. 
		\item [(b)] The random infinite vector $\left(\frac{\mathrm{Occ}_{\mathbf{G}^{(n)}}(H)}{n(n-1)\dots(n-|H|+1)}\right)_{H\text{ finite graph}}$ converges in distribution in the product topology to some random infinite vector $(\mathbf{\Lambda}_H)_{H\text{ finite graph}}$. 

	\end{enumerate}

\end{thm}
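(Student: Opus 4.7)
The plan is to derive the equivalence from three standard pillars of graphon theory gathered in \cite{lovasz}: (i) the metric space $(\tilde{\mathcal{W}}, \delta_{\square})$ is compact; (ii) for every finite graph $H$ the induced-density functional $W \mapsto t_{\mathrm{ind}}(H, W)$ is continuous for $\delta_{\square}$; and (iii) the countable family $\{t_{\mathrm{ind}}(H, \cdot)\}_{H}$ separates points of $\tilde{\mathcal{W}}$ and hence characterizes the Borel law of any random graphon. Taking these as black boxes, I would first translate the occurrence counts appearing in (b) into induced densities: a direct count from \Cref{graphe-graphon} yields
$$\frac{\mathrm{Occ}_{\mathbf{G}^{(n)}}(H)}{n(n-1)\cdots(n-|H|+1)} = \frac{t_{\mathrm{ind}}(H, W_{\mathbf{G}^{(n)}})}{|\mathrm{Aut}(H)|} + O(n^{-1}),$$
so that assertion (b) is equivalent to the joint convergence in distribution of $\left(t_{\mathrm{ind}}(H, W_{\mathbf{G}^{(n)}})\right)_{H}$ in the product topology.

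For (a) $\Rightarrow$ (b) I would invoke the continuous mapping theorem applied to any finite tuple of density functionals, which is jointly continuous on $\tilde{\mathcal{W}}$ by (ii); this yields the convergence of each finite marginal, and product-topology convergence is precisely convergence of all finite marginals. Conversely, for (b) $\Rightarrow$ (a), compactness of $\tilde{\mathcal{W}}$ from (i) implies that the random graphons $W_{\mathbf{G}^{(n)}}$ form a tight sequence. Any subsequential weak limit $\mathbf{W}^{*}$ then satisfies $\left(t_{\mathrm{ind}}(H, \mathbf{W}^{*})\right)_{H} \stackrel{d}{=} (\mathbf{\Lambda}_{H})_{H}$ by the first direction, and by (iii) the densities generate the Borel $\sigma$-algebra on $\tilde{\mathcal{W}}$, so this identifies the law of $\mathbf{W}^{*}$ uniquely. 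All subsequential limits thus coincide, which forces convergence in distribution of the whole sequence.

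The delicate point, and the real heart of the argument, is fact (iii): two graphons with identical induced-density profiles represent the same equivalence class in $\tilde{\mathcal{W}}$. This is the Lov\'asz--Szegedy moment-type characterization of graphons, and it is precisely what provides enough separation to identify the distribution of a random graphon from its subgraph density statistics. Once (i)--(iii) are granted as input from \cite{lovasz}, the remainder is a routine combination of tightness in a compact metric space, the continuous mapping theorem, and uniqueness of subsequential weak limits.
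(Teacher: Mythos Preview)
The paper does not prove this theorem: it is stated explicitly as a rephrasing of \cite[Theorem~3.1]{janson} and is quoted without proof as a black box from the literature. So there is no ``paper's own proof'' to compare your proposal against.

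That said, your sketch is essentially the standard argument behind the Diaconis--Janson result and is correct in outline. The three pillars you isolate (compactness of $(\tilde{\mathcal{W}},\delta_{\square})$, continuity of induced densities, and the Lov\'asz--Szegedy separation property) are exactly what drives the equivalence, and the tightness-plus-identification-of-subsequential-limits scheme for (b)$\Rightarrow$(a) is the right one. One small remark: to pass from ``the densities separate points'' to ``the joint law of the densities determines the law of the random graphon'', you are implicitly using that on a compact metric space a countable separating family of continuous functions induces a homeomorphism onto its image in the product space, so that the Borel $\sigma$-algebra is indeed generated by these functionals. You allude to this, but it is worth making explicit since it is the step where separation of points is upgraded to determination of probability laws.
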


For a finite graph $H$, the random variable $\mathbf{\Lambda}_H$ can be seen as the density of the pattern $H$ in the graphon $\mathbf{W}$:  the variables $(\mathbf{\Lambda}_H)_H$ play the roles of margins of $\mathbf{W}$ in the space of graphons. 

For $k\geq 1$ and $\mathbf{W}$ a random graphon, we denote by $\mathrm{Sample}_k(\mathbf{W})$ the unlabeled random graph built as follows:
$\mathrm{Sample}_k(W)$ has vertex set $\{v_1,v_2,\dots,v_k\}$ and,
letting $(X_1,\dots,X_k)$ be i.i.d.~uniform random variables in $[0,1]$,
we connect vertices $v_i$ and $v_j$ with probability $w(X_i,X_j)$
(these events being independent, conditionally on $(X_1,\cdots,X_k)$ and $\mathbf{W}$). The construction does not depend on the representation of the graphon.

With the notations of \Cref{Portmanteau}, we have for any finite graph $H$ $$\mathbf{\Lambda}_H=\mathbb{P}(\mathrm{Sample}_{|H|}(\mathbf{W})=H\ \vert\  \mathbf{W}).$$

The article \cite{bassino2021random} introduces a random graphon $\mathbf{W}^{1/2}$ called the Brownian cographon which can be explicitly constructed as a function of a realization of a Brownian excursion. Besides, \cite[Proposition 5]{bassino2021random} states that the distribution of the Brownian cographon is characterized\footnote{This characterization is strongly linked to the remarkable property that $k$ uniform leaves in the CRT induce a uniform binary tree with $k$ leaves, see again \cite[Section 4.2]{bassino2021random}.} by the fact that for every $k\geq 2$,  $\mathrm{Sample}_k(\mathbf{W}^{1/2})$ has the same law as the unlabeled version of $\mathrm{Graph}(\mathbf{b}_k)$ with $\mathbf{b}_k$ a uniform labeled binary tree with $k$ leaves and i.i.d.~uniform decorations in $\{\oplus, \ominus \}$.

A consequence of this characterization is a simple criterion for convergence to the Brownian cographon.

\begin{lem}[Rephrasing of \cite{bassino2021random} Lemma $4.4$]\label{111}
For every positive integer $n$, let $\mathbf{T}^{(n)}$ be a uniform random tree in $\mathcal{T}_{\mathcal{P},\mathcal{P}^{\bullet}}$ with $n$ vertices.
	For every positive integer $\ell$, $\mathbf{\mathfrak{I}_\ell}^{(n)}$ be a uniform partial injection from $\{1,\dots,n\}$ to $\N$ whose image is $\{1,\dots, \ell\}$ and independent of $\mathbf{T}^{(n)}$. Denote by $\mathbf{T}_{\mathbf{\mathfrak{I}_\ell}^{(n)}}^{(n)}$ the subtree induced by $\mathbf{\mathfrak{I}_\ell}^{(n)}$.

	Suppose that for every $\ell$ and for every binary tree $\tau$ with $\ell$ leaves, 
	\begin{equation}\mathbb{P}(\mathbf{T}_{\mathbf{\mathfrak{I}}^{(n)}}^{(n)}=\tau) \xrightarrow[n\to\infty]{} \frac{(\ell-1)!}{(2\ell-2)!}.\label{eq:factorisation}\end{equation}
	Then $W_{\mathrm{Graph}(\mathbf{T}^{(n)})}$ converges as a graphon to the Brownian cographon $\mathbf{W}^{1/2}$ of parameter $1/2$.
	
\end{lem}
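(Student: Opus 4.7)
The plan is to combine Theorem \ref{Portmanteau} (Diaconis--Janson) with the characterization of the Brownian cographon recalled just before the lemma. Graphon convergence of $W_{\mathrm{Graph}(\mathbf{T}^{(n)})}$ to $\mathbf{W}^{1/2}$ is equivalent to the joint convergence in distribution of the infinite vector of induced subgraph densities $\mathrm{Occ}_{\mathbf{G}^{(n)}}(H)/n^{|H|}$; since $(\tilde{\mathcal{W}},\delta_{\square})$ is compact, any subsequential limit is some random graphon $\mathbf{W}$. By \cite[Proposition 5]{bassino2021random}, to identify $\mathbf{W}$ with $\mathbf{W}^{1/2}$ it suffices to show that for every $k\geq 2$, $\mathrm{Sample}_k(\mathbf{W})$ has the same law as the unlabeled version of $\mathrm{Graph}(\mathbf{b}_k)$, where $\mathbf{b}_k$ is a uniform labeled binary tree with $k$ leaves, with i.i.d.\ uniform $\{\oplus,\ominus\}$ decorations on internal nodes.

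First I would translate graphon sampling into combinatorial sampling on the graph: for $k$ fixed, sampling $k$ i.i.d.\ uniform points in $[0,1]$ and reading the induced subgraph of $w_{\mathrm{Graph}(\mathbf{T}^{(n)})}$ coincides, up to the probability $O(k^2/n)\to 0$ that two points fall in the same cell $(\frac{i-1}{n},\frac{i}{n}]$, with drawing a uniform partial injection $\mathbf{\mathfrak{I}}_k^{(n)}$ of image $\{1,\dots,k\}$ independently of $\mathbf{T}^{(n)}$ and taking $\mathrm{Graph}(\mathbf{T}^{(n)})_{\mathbf{\mathfrak{I}}_k^{(n)}}$ (after forgetting labels). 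Applying Lemma \ref{çasevoit} rewrites this as $\mathrm{Graph}\bigl(\mathbf{T}^{(n)}_{\mathbf{\mathfrak{I}}_k^{(n)}}\bigr)$, so everything reduces to the limiting law of the induced subtree.

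Second I would use the hypothesis. A direct count shows that the number of labeled decorated binary trees with $\ell$ leaves (leaves labeled by $\{1,\dots,\ell\}$, internal nodes decorated by $\oplus$ or $\ominus$, tree non-plane) equals $(2\ell-3)!!\cdot 2^{\ell-1}=(2\ell-2)!/(\ell-1)!$, so the prescribed limit $(\ell-1)!/(2\ell-2)!$ in \cref{eq:factorisation} is precisely the uniform measure on this finite set. Summing the hypothesis over all such $\tau$ produces total mass $1$, which forces $\mathbf{T}^{(n)}_{\mathbf{\mathfrak{I}}_k^{(n)}}$ to converge in distribution to $\mathbf{b}_k$ and, in particular, the probability of it being non-binary to tend to $0$. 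The map $t\mapsto \mathrm{Graph}(t)$ is deterministic and has finite range on the set of trees with $k$ leaves, so after forgetting labels the sampled subgraph converges in distribution to the unlabeled version of $\mathrm{Graph}(\mathbf{b}_k)$.

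The step I expect to require the most care is the bookkeeping at the interface of the three viewpoints (sampling in the graphon, sampling via a uniform partial injection of the graph, and induced subtree): one has to argue that the marginal convergence of $\mathrm{Sample}_k$ for each fixed $k$ does identify every subsequential limit $\mathbf{W}$ of $W_{\mathrm{Graph}(\mathbf{T}^{(n)})}$ with $\mathbf{W}^{1/2}$. This is resolved by combining the compactness of $(\tilde{\mathcal{W}},\delta_{\square})$ (giving tightness), Theorem \ref{Portmanteau} (transferring $\mathrm{Sample}_k$ limits into density limits, whose joint distribution is determined by one-dimensional marginals since each $\mathrm{Sample}_k$ takes finitely many values), and the above characterization of the Brownian cographon; uniqueness of the subsequential limit then upgrades subsequential convergence to full convergence.
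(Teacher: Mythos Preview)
The paper does not give its own proof of this lemma: it is stated as a rephrasing of \cite[Lemma~4.4]{bassino2021random} and used as a black box. Your sketch is essentially the standard argument behind that result and is correct in outline: reduce graphon sampling to a uniform injection via the step-function representation (losing $O(k^2/n)$), pass to the induced subtree via Lemma~\ref{çasevoit}, use the counting $(2\ell-3)!!\,2^{\ell-1}=(2\ell-2)!/(\ell-1)!$ to see that the hypothesis~\eqref{eq:factorisation} exhausts total mass and hence forces convergence of the induced subtree to $\mathbf{b}_k$, and conclude by compactness of $(\tilde{\mathcal W},\delta_\square)$ together with the cited characterization of $\mathbf{W}^{1/2}$.

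One phrasing in your last paragraph deserves tightening. The sentence ``whose joint distribution is determined by one-dimensional marginals since each $\mathrm{Sample}_k$ takes finitely many values'' is not the right justification: the joint law of the density vector $(\Lambda_H(\mathbf{W}))_H$ is \emph{not} in general determined by its one-dimensional marginals. What you actually need, and what \cite[Proposition~5]{bassino2021random} provides, is that the \emph{distribution of the random graphon} $\mathbf{W}$ is determined by the family of unconditional laws of $\mathrm{Sample}_k(\mathbf{W})$ for all $k$, i.e.\ by the numbers $\mathbb{E}[\Lambda_H(\mathbf{W})]=\mathbb{P}(\mathrm{Sample}_{|H|}(\mathbf{W})=H)$. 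With this formulation your subsequence argument is clean: along any convergent subsequence $W_{\mathrm{Graph}(\mathbf{T}^{(n_j)})}\to\mathbf{W}$, continuity of $\Lambda_H$ and bounded convergence give $\mathbb{E}[\Lambda_H(\mathbf{W})]=\lim_j \mathbb{P}(\mathrm{Sample}_{|H|}(W_{\mathrm{Graph}(\mathbf{T}^{(n_j)})})=H)=\mathbb{P}(\mathrm{Graph}(\mathbf{b}_{|H|})=H)$, whence $\mathbf{W}\stackrel{d}{=}\mathbf{W}^{1/2}$ and full convergence follows.
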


\subsection{Conclusion of the proof of \Cref{cgbintro}}

\begin{prop}\label{c2}
Let $\tau$ be a binary tree with $\ell\geq 2$ leaves. The series $T_{\tau}$ has radius of convergence $R$, is $\Delta$-analytic and its asymptotic expansion near $R$ is: 
\begin{align}
    \label{ee1}
T_{\tau}=\frac{ \kappa}{(1+P^{\bullet}(R)) 2^{2\ell-2}}\left(1-\frac{z}{R}\right)^{-\frac{2\ell-1}{2}}\left(1+o(1)\right).
\end{align}
\end{prop}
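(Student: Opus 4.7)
The plan is to apply Theorem \ref{thmcoeur} to $\tau$ and isolate the single dominant summand. Since $\tau$ is binary, every internal node has exactly two children, so its decoration is a graph of size $2$; the only such graphs are $\oplus$ and $\ominus$, both linear. In particular $\tau$ has no non-linear internal node, so the hypothesis "every non-linear node of $\tau$ lies in \Vsr" is vacuous, and
$$T_\tau = \sum_{V_0, V_1, V_2, V_3,\mathrm{rk}} T_{\tau, V_0, V_1, V_2, V_3,\mathrm{rk}}$$
is a finite sum ranging over all admissible tuples.

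Next I would identify the dominant summand by looking at the singular exponent $\beta$ from the corollary following Theorem \ref{thmcoeur}. The key input is the following edge-counting identity. The tree $\tau$ has $2\ell-2$ edges, and every $v \in \Vs$ belongs to $U_0 \cup U_1$ and therefore has exactly two children in $\tau$. Hence $2|\Vs|$ edges of $\tau$ have a parent in $\Vs$, and the remaining $2\ell-2-2|\Vs|$ edges are distributed among $d_=, d_{\neq}, d_{\overline{\Vs}\to\Vs}, d_{\overline{\Vs}\to\ell}$. Substituting this into the formula for $\beta$ yields
$$-2\beta = 2\ell - 1 - 2|V_0| - 2|V_1| - |V_2| - |V_3| \leq 2\ell - 1,$$
with equality if and only if $V_0 = V_1 = V_2 = V_3 = \emptyset$. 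So the dominant contribution comes from this single "empty" configuration, with singular exponent $-(2\ell-1)/2$, while every other summand is $O((1-z/R)^{-(\ell-1)})$, which is of strictly smaller order near $R$.

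The final step is to compute the leading constant for the empty configuration. There $T^{\mathrm{root}} = T^\oplus$, $F = 1$, $d_= + d_{\neq} = \ell - 2$ (the internal edges of $\tau$), $d_{\overline{\Vs}\to\ell} = \ell$ and $n_L = \ell - 1$, so Theorem \ref{thmcoeur} reduces to
$$T_{\tau,\emptyset,\emptyset,\emptyset,\emptyset,\emptyset} = z^{\ell} T^\oplus (T_{\mathrm{not}\oplus}^\oplus)^{d_=} (T_{\mathrm{not}\oplus}^\ominus)^{d_{\neq}} (T_{\mathrm{not}\oplus}')^{\ell} \exp\bigl((\ell-1) T_{\mathrm{not}\oplus}\bigr).$$
Inserting the singular expansions of $T^\oplus, T_{\mathrm{not}\oplus}^\oplus, T_{\mathrm{not}\oplus}^\ominus$ (Theorem \ref{4.5} and its corollary) and of $T_{\mathrm{not}\oplus}'$ (obtained by differentiating Theorem \ref{thm5}), together with $\exp(T_{\mathrm{not}\oplus}(R)) = 2/(1+P^\bullet(R))$, a direct simplification produces the announced constant $\kappa/((1+P^\bullet(R))\, 2^{2\ell-2})$. $\Delta$-analyticity of $T_\tau$ is inherited from the building blocks, which are all $\Delta$-analytic on a common $\Delta$-domain at $R$. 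The main point to watch out for is the edge-counting identity, which collapses the combinatorial complexity of the many $(V_0,V_1,V_2,V_3)$-configurations down to the single dominant term; the remaining coefficient computation is then a bookkeeping exercise.
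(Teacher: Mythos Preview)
Your proof is correct and follows essentially the same approach as the paper: write $T_\tau$ as the finite sum of the $T_{\tau,V_0,V_1,V_2,V_3,\mathrm{rk}}$, identify the single dominant summand $(V_0,V_1,V_2,V_3)=(\emptyset,\emptyset,\emptyset,\emptyset)$ via the exponent $\beta$, and read off the constant. You supply more detail than the paper does, in particular the edge-counting identity $d_=+d_{\neq}+d_{\overline{\Vs}\to\Vs}+d_{\overline{\Vs}\to\ell}=2\ell-2-2|\Vs|$ and the explicit verification of the leading constant, but the structure is identical.
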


\begin{proof}
As
$$T_{\tau}=\sum \limits_{\tau,V_0,V_1,V_2,V_3,\mathrm{rk}}T_{\tau,V_0,V_1,V_2,V_3,\mathrm{rk}},$$
the asymptotic expansions of the different series $T_{\tau,V_0,V_1,V_2,V_3,\mathrm{rk}}$ yield the $\Delta$-analyticity of $T_{\tau}$, its asymptotic expansion and its radius of convergence.

Note that $\beta\leq \frac{1+e}{2}$ where $e$ is the number of edge of $\tau$, with equality if and only if $V_0,V_{1},V_{2}$ and $V_{3}$ are all empty.

Therefore, only the series $T_{\tau,\emptyset,\emptyset,\emptyset,\emptyset,\mathrm{rk}}$ contributes to the leading term of the asymptotic expansion. In this case, $d_{\overline{\Vs}\to \ell}=\ell$, $d_=+d_{\neq}=\ell-2$ and $n_L=\ell-1$ which gives the announced expansion.
\end{proof}

\begin{thm}\label{110}

Let $\tau$ be a binary tree with $\ell\geq 2$ leaves. For $n\geq \ell$ and $\mathbf{T}^{(n)}$ be a uniform random tree in $\mathcal{T}_{\mathcal{P},\mathcal{P}^{\bullet}}$ with $n$ vertices.
Let $\mathbf{\mathfrak{I}_{\ell}}^{(n)}$ be a uniform partial injection from $\{1,\dots,n\}$ to $\N$ whose image is $\{1,\dots, \ell\}$ and independent of $\mathbf{T}^{(n)}$. Denote by $\mathbf{T}_{\mathbf{\mathfrak{I}_{\ell}}^{(n)}}^{(n)}$ the subtree induced by $\mathbf{\mathfrak{I}_{\ell}}^{(n)}$.

Then $$\mathbb{P}(\mathbf{T}_{\mathbf{\mathfrak{I}_{\ell}}^{(n)}}^{(n)}=\tau) \xrightarrow[n\to\infty]{} \frac{(\ell-1)!}{(2(\ell-1))!}.$$
\end{thm}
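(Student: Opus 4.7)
The plan is to compute the probability by straightforward counting and apply singularity analysis to the two generating functions $T$ and $T_\tau$ whose asymptotics have already been established.

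First, I would express the probability combinatorially. By construction, the event $\mathbf{T}_{\mathfrak{I}_\ell^{(n)}}^{(n)} = \tau$ has probability
\begin{equation*}
\mathbb{P}(\mathbf{T}_{\mathfrak{I}_\ell^{(n)}}^{(n)} = \tau) = \frac{\#\{(t,\mathfrak{I}) : t \in \mathcal{T}_{\mathcal{P},\mathcal{P}^{\bullet}},\ |t|=n,\ \mathrm{Im}(\mathfrak{I})=\{1,\dots,\ell\},\ t_\mathfrak{I}=\tau\}}{|\{t \in \mathcal{T}_{\mathcal{P},\mathcal{P}^{\bullet}} : |t|=n\}| \cdot \tfrac{n!}{(n-\ell)!}}.
\end{equation*}
The numerator is, by the very definition of the EGF $T_\tau$, equal to $n!\,[z^n]T_\tau$, while the number of trees of size $n$ in $\mathcal{T}_{\mathcal{P},\mathcal{P}^{\bullet}}$ is $n!\,[z^n]T$. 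Therefore
\begin{equation*}
\mathbb{P}(\mathbf{T}_{\mathfrak{I}_\ell^{(n)}}^{(n)} = \tau) = \frac{(n-\ell)!\,[z^n]T_\tau}{n!\,[z^n]T}.
\end{equation*}

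Second, I would extract the coefficient asymptotics. Both $T$ and $T_\tau$ are $\Delta$-analytic at $R$ (by \Cref{thm5} and \Cref{c2}), so the Transfer Theorem \cite[Corollary VI.1]{flajolet2009analytic} applies. From \Cref{thm5}, the singular term of $T$ near $R$ is $-\tfrac{2\kappa}{1+P^{\bullet}(R)}\sqrt{1-z/R}$, giving
\begin{equation*}
[z^n]T \sim \frac{\kappa}{(1+P^{\bullet}(R))\sqrt{\pi}}\,R^{-n}n^{-3/2},
\end{equation*}
in agreement with \Cref{cor1}. From \Cref{c2} the leading singular term of $T_\tau$ is $\frac{\kappa}{(1+P^{\bullet}(R))\,2^{2\ell-2}}(1-z/R)^{-(2\ell-1)/2}$, so
\begin{equation*}
[z^n]T_\tau \sim \frac{\kappa}{(1+P^{\bullet}(R))\,2^{2\ell-2}\,\Gamma(\ell-1/2)}\,R^{-n}n^{\ell-3/2}.
\end{equation*}

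Third, I would invoke the classical Gamma identity
\begin{equation*}
\Gamma\!\left(\ell-\tfrac{1}{2}\right) = \frac{(2\ell-2)!\,\sqrt{\pi}}{4^{\ell-1}(\ell-1)!}
\end{equation*}
and substitute everything into the ratio. Since $4^{\ell-1} = 2^{2\ell-2}$, the factors $\kappa$, $1+P^{\bullet}(R)$, $\sqrt{\pi}$, $R^{-n}$ and $2^{2\ell-2}$ all cancel, leaving
\begin{equation*}
\mathbb{P}(\mathbf{T}_{\mathfrak{I}_\ell^{(n)}}^{(n)} = \tau) \sim \frac{(\ell-1)!}{(2\ell-2)!}\cdot n^{\ell}\cdot\frac{(n-\ell)!}{n!}.
\end{equation*}
As $n\to\infty$, the factor $n^\ell (n-\ell)!/n! = n^\ell/\bigl(n(n-1)\cdots(n-\ell+1)\bigr)$ tends to $1$, yielding the announced limit $\tfrac{(\ell-1)!}{(2(\ell-1))!}$.

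The proof is essentially a routine computation once the asymptotic expansions of $T$ and $T_\tau$ are available; there is no real obstacle, only a bit of bookkeeping of the explicit constants, most of which cancel. The one place requiring mild care is invoking the Gamma identity and verifying that $\ell$ is indeed at least $2$ so that the expansions of \Cref{c2} apply (the case $\ell = 1$ being trivial and not covered by the statement). Note also that, crucially, the leading constant in \Cref{c2} is universal in $\tau$ (only the shape of $\tau$ through $\ell$ matters), which is what makes the limiting probability independent of the particular binary tree $\tau$ and thus compatible with \Cref{111}.
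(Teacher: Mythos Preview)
Your proof is correct and follows essentially the same route as the paper: express the probability as $[z^n]T_\tau$ divided by $n(n-1)\cdots(n-\ell+1)[z^n]T$, extract both coefficients via the Transfer Theorem from the singular expansions of \Cref{thm5} and \Cref{c2}, and simplify using the identity $\Gamma(\ell-\tfrac12)=\tfrac{(2\ell-2)!\sqrt{\pi}}{4^{\ell-1}(\ell-1)!}$. The only cosmetic difference is that the paper writes the falling factorial directly rather than as $n!/(n-\ell)!$.
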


\begin{proof}
Since $\mathbf{\mathfrak{I}_{\ell}}^{(n)}$ is independent of $\mathbf{T}^{(n)}$, $$\mathbb{P}(\mathbf{T}_{\mathbf{\mathfrak{I}_{\ell}}^{(n)}}^{(n)}=\tau)=\frac{n![z^n]T_{\tau}}{n(n-1)\dots (n-\ell+1)n![z^n]T}=\frac{[z^n]T_{\tau}}{n(n-1)\dots (n-\ell+1)[z^n]T}$$

By applying the Transfer Theorem \cite[Corollary VI.1 p.392]{flajolet2009analytic} to \cref{ee1}, we get 
$$[z^n]T_{\tau}\sim  \frac{ \kappa}{(1+P^{\bullet}(R)) 2^{2\ell-2}} \frac{n^{\frac{2\ell-3}{2}}}{\Gamma\left(\frac{2\ell-1}{2}\right)R^n}$$

and by \cref{cor1} we obtain
$$n\times \cdots\times (n-\ell+1)[z^n]T\sim n^{\ell} \frac{\kappa}{\sqrt{\pi}(1+\mathcal{P}^{\bullet}(R))} \frac{1}{R^{n}n^{\frac{3}{2}}}.$$

Thus when $n$ goes to infinity \[\mathbb{P}(\mathbf{T}_{\mathbf{\mathfrak{I}_{\ell}}^{(n)}}^{(n)}=\tau)\to \frac{\sqrt{\pi}}{2^{2\ell-2}\Gamma\left(\frac{2\ell-1}{2}\right)}=\frac{(\ell-1)!}{(2(\ell-1))!}\qedhere\]  \end{proof}

Combining \Cref{111} and \Cref{110} proves \Cref{thm6.7} of which \Cref{cgbintro} is a particular case.

\begin{thm}\label{thm6.7}
Let $\mathbf{G}^{(n)}$ be a uniform random graph in $\mathcal{G}_{\mathcal{P},\mathcal{P}^{\bullet}}$ with $n$ vertices. We have the following convergence in distribution in the sense of graphons:

$$W_{\mathbf{G}^{(n)}}\stackrel{n\to \infty}{\longrightarrow} \mathbf{W}^{\frac{1}{2}}$$

\noindent where $\mathbf{W}^{\frac{1}{2}}$ is the Brownian cographon of parameter $\frac{1}{2}$.
\end{thm}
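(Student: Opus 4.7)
The plan is to deduce Theorem~\ref{thm6.7} by directly combining \Cref{111} and \Cref{110}, together with the bijection of \Cref{2.18}. First, the correspondence $t \mapsto \mathrm{Graph}(t)$ is a bijection between $\mathcal{T}_{\mathcal{P},\mathcal{P}^{\bullet}}$ and $\mathcal{G}_{\mathcal{P},\mathcal{P}^{\bullet}}$ preserving size; hence if $\mathbf{T}^{(n)}$ is a uniform random tree in $\mathcal{T}_{\mathcal{P},\mathcal{P}^{\bullet}}$ with $n$ leaves, then $\mathrm{Graph}(\mathbf{T}^{(n)})$ is a uniform random graph in $\mathcal{G}_{\mathcal{P},\mathcal{P}^{\bullet}}$ with $n$ vertices, so that $W_{\mathbf{G}^{(n)}}$ and $W_{\mathrm{Graph}(\mathbf{T}^{(n)})}$ have the same law.

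Next I would invoke the criterion of \Cref{111}: to prove graphon convergence of $W_{\mathrm{Graph}(\mathbf{T}^{(n)})}$ to $\mathbf{W}^{1/2}$, it suffices to show that for every $\ell \geq 2$ and every binary tree $\tau$ with $\ell$ leaves, the induced subtree on a uniform partial injection with image $\{1,\dots,\ell\}$ equals $\tau$ with limiting probability $\frac{(\ell-1)!}{(2\ell-2)!}$. This is exactly the content of \Cref{110}. Combining these two ingredients yields $W_{\mathbf{G}^{(n)}} \to \mathbf{W}^{1/2}$ in distribution in the sense of graphons, which is \Cref{thm6.7}.

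The bulk of the work has already been shouldered by the preceding sections, so at this stage the proof is essentially a one-paragraph assembly. The real obstacle lay in establishing \Cref{110}, which itself rests on the analytic-combinatorial machinery built up earlier: the enriched modular decomposition and the resulting functional equations for $T$, $T_{\mathrm{not}\oplus}$, and the various blossomed series (\Cref{sec:exa}); the singularity analysis via the smooth implicit function schema giving the square-root singularity of $T$ at $R$ (\Cref{thm5}); the fine decomposition of a tree admitting $\tau$ as an induced subtree together with the associated generating function identity (\Cref{thmcoeur}); and the identification in \Cref{c2} of the dominant contribution $T_{\tau,\emptyset,\emptyset,\emptyset,\emptyset,\mathrm{rk}}$ whose exponent $\beta$ is maximized when $V_0,V_1,V_2,V_3$ are all empty. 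Once these pieces are in place, transferring asymptotics via \cite[Corollary~VI.1]{flajolet2009analytic} and taking the ratio of the resulting estimates produces the clean limit $\frac{\sqrt{\pi}}{2^{2\ell-2}\Gamma((2\ell-1)/2)} = \frac{(\ell-1)!}{(2\ell-2)!}$, matching the expected density of a uniform labeled binary tree with $\ell$ leaves and closing the argument via \Cref{111}.
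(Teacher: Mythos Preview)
Your proposal is correct and matches the paper's own proof exactly: the paper simply states that combining \Cref{111} and \Cref{110} proves \Cref{thm6.7}, and your argument does precisely this (with the bijection of \Cref{2.18} making the identification between $\mathbf{G}^{(n)}$ and $\mathrm{Graph}(\mathbf{T}^{(n)})$ explicit). Your second paragraph summarizing the earlier machinery is accurate but goes beyond what is needed at this point, since those results are already established.
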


\subsection{Number of induced prime subgraphs}

We now estimate for a prime graph $H$  the number $Occ_H(\mathbf{G}^{(n)})$ of induced occurences of $H$ in $\mathbf{G}^{(n)}$ and show that in average it is null, linear or of order $n^{\frac{3}{2}}$. 

We first observe that substitution trees encoding prime graphs have a very simple structure.

\begin{lem}
Let $H$ be a prime graph. If $t$ is a substitution tree such that $H=\mathrm{Graph}(t)$, $t$ is reduced to a single internal node decorated with a relabeling of $H$ with $|H|$ leaves.
\end{lem}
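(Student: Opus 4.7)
The plan is to show that any substitution tree $t$ with $\mathrm{Graph}(t) = H$ must be trivial, by using the primality of $H$ together with the basic fact that the vertex-sets induced by the subtrees at the root form a partition into modules.

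First I would observe that since $H$ is prime, $|H| \geq 3$, so $t$ cannot be reduced to a single leaf. Let $r$ be the root of $t$, let $K$ be its decoration, and let $t_1, \dots, t_k$ be the subtrees rooted at the children of $r$ (so $k \geq 2$). For each $i$, set $M_i$ to be the set of vertex labels of $\mathrm{Graph}(t)$ coming from leaves of $t_i$. From \Cref{2.3} applied to $\mathrm{Graph}(t) = K[\mathrm{Graph}(t_1),\dots,\mathrm{Graph}(t_k)]$, it is immediate that every vertex outside $M_i$ is either adjacent to all of $M_i$ or to none of it (this adjacency is controlled entirely by the edge structure of $K$ on the ``macro'' vertex $v_i$). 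Hence each $M_i$ is a module of $H$, and $(M_1,\dots,M_k)$ is a partition of $V(H)$.

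Next, I would use primality. Since $H$ is prime, its only modules are $\emptyset$, $V(H)$, and singletons. Each $M_i$ is non-empty (each subtree has at least one leaf) and strictly contained in $V(H)$ (since $k \geq 2$). Therefore each $M_i$ is a singleton, which forces each $t_i$ to be a single leaf and $k = |H|$.

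It remains to identify $K$. First rule out the linear decorations: if $K = \oplus$ (resp.\ $\ominus$), then $H = \oplus[v_1,\dots,v_k]$ is a complete graph (resp.\ empty graph) on $k \geq 3$ vertices, and any $2$-element subset of vertices is a non-trivial module, contradicting primality. So $K$ must be a graph decoration with $|K| = k = |H|$ vertices. Since each subtree is a single leaf, the definition of substitution gives $\mathrm{Graph}(t) = K$ up to the bijection between the vertices of $K$ and the leaf labels, i.e.\ $K$ is a relabeling of $H$. The main (mild) obstacle is simply verifying cleanly that the $M_i$'s are modules, but this is a one-line consequence of \Cref{2.3}; everything else is a direct application of the definition of prime.
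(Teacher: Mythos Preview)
Your proposal is correct and follows essentially the same approach as the paper: both argue that the leaf-sets of the root's subtrees are modules of $H$, then invoke primality to force each subtree to be a single leaf. Your version is more thorough in that you explicitly rule out the linear decorations $\oplus/\ominus$ (which the paper's two-line proof leaves implicit), but the core idea is identical.
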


\begin{proof}
Let $t$ be such a tree and $r$ its root. To every element $t'$ of $t_r$ we can associate a module of H by taking the vertices whose labels are the labels of the leaves of $t'$. Thus $t_r$ is a union of leaves, and the decoration of the root is a relabeling of $H$.
\end{proof}

We say that $H$ verifies $(A)$ if there exists $a\in\{1,\dots, \ell\}$ such that $\mathrm{Occ}_{H,a,\mathcal{P}^{\bullet}}(R)>0$.

\begin{thm}
\label{thmprime}
Let $H$ be a prime graph and let $\ell$ be its size. For $n\geq \ell$, let $\mathbf{G}^{(n)}$ be a uniform random graph in $\mathcal{G}_{\mathcal{P},\mathcal{P}^{\bullet}}$ with $n$ vertices.

Then if $H$ verifies $(A)$, 

$$\hspace{-2,1cm}\mathbb{E}[\mathrm{Occ}_{H}(\mathbf{G}^{(n)})]\sim K_H n^{\frac{3}{2}}\qquad\text{with}\qquad K_H=\frac{R^{\ell-1}\sqrt{\pi}\sum\limits_{a\in\{1,\dots, \ell\}} \mathrm{Occ}_{H,a,\mathcal{P}^{\bullet}}(R)}{\kappa (1+P^{\bullet}(R))}$$

\noindent otherwise,

$$\mathbb{E}[\mathrm{Occ}_{H}(\mathbf{G}^{(n)})]\sim K_H n\qquad\text{with}\qquad K_H=\left(\frac{1-P^{\bullet}(R)}{1+P^{\bullet}(R)}\mathrm{Occ}_{H,\mathcal{P}^{\bullet}}(R)+\mathrm{Occ}_{H,\mathcal{P}}(R)\right) \frac{R^{\ell}}{\kappa^2 }$$

\end{thm}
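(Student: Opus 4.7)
The plan is to reduce $\mathbb{E}[\mathrm{Occ}_H(\mathbf{G}^{(n)})]$ to the singularity analysis of the generating function $T_\tau$ built in \Cref{thmcoeur}, where $\tau$ is the substitution tree encoding $H$. By \Cref{çasevoit} and the bijection between graphs in $\mathcal{G}_{\mathcal{P},\mathcal{P}^{\bullet}}$ of size $n$ and trees in $\mathcal{T}_{\mathcal{P},\mathcal{P}^{\bullet}}$ of size $n$, one can rewrite
\[
\mathbb{E}[\mathrm{Occ}_H(\mathbf{G}^{(n)})]=\frac{n!\,[z^n]T_{\tau}}{|\mathcal{G}_n|}.
\]
By the lemma immediately preceding the theorem, $H$ being prime forces $\tau$ to consist of a single internal node $r$ (the root), decorated with (a relabeling of) $H$, with its $\ell$ vertices as leaves.

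I would then apply \Cref{thmcoeur} to this very simple $\tau$. All edge terms in the statement ($d_{=}$, $d_{\neq}$, $d_{\overline{\Vs}\to\Vs}$, $d_{\overline{\Vs}\to\ell}$, $n_L$, $n_1$, $n_2$) vanish because $r$ is the only internal node; moreover $r\in U_0$ since its children are all leaves, so the only admissible configurations are $V_0=\{r\}$, $V_1=\{r\}$, or $V_2=\{r\}$ (with $\mathrm{rk}(r)$ ranging over $\{1,\dots,\ell\}$). Collecting these contributions gives
\[
T_\tau=z^{\ell}T^{\mathsf{blo}}\Bigl(\mathrm{Occ}_{H,\mathcal{P}}+T\cdot\mathrm{Occ}_{H,\mathcal{P}^{\bullet}}+T'\cdot\sum_{a=1}^{\ell}\mathrm{Occ}_{H,a,\mathcal{P}^{\bullet}}\Bigr).
\]
The corollary of \Cref{thmcoeur} then shows that the singular exponent $\beta$ equals $-\tfrac{1}{2}$ for the $V_0$ and $V_1$ terms but $-1$ for the $V_2$ term (because $|V_2|$ enters the formula for $\beta$). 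Hence the $V_2$ contribution dominates whenever its leading coefficient at $z=R$, namely $\sum_a\mathrm{Occ}_{H,a,\mathcal{P}^{\bullet}}(R)$, is strictly positive, which is exactly condition $(A)$; this explains the dichotomy in the theorem.

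The main obstacle is a careful bookkeeping of the constants. I would extract asymptotics via the Transfer Theorem and Singular Differentiation \cite[Corollary VI.1 and Theorem VI.8]{flajolet2009analytic}, using the singular expansions
\[
T^{\mathsf{blo}}(z)\sim\frac{(1-z/R)^{-1/2}}{(1+P^{\bullet}(R))\kappa},\quad T'(z)\sim\frac{\kappa(1-z/R)^{-1/2}}{R(1+P^{\bullet}(R))},\quad T(R)=\frac{1-P^{\bullet}(R)}{1+P^{\bullet}(R)},
\]
combined with $|\mathcal{G}_n|\sim\kappa\,n!/(\sqrt{\pi}(1+P^{\bullet}(R))R^{n}n^{3/2})$ from \Cref{cor1}. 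Multiplying through by the factor $z^{\ell}$ (which contributes $R^{\ell}$ at the singularity) and the series evaluations $\mathrm{Occ}_{H,\cdot}(R)$, then dividing by the asymptotic of $|\mathcal{G}_n|$, yields the stated $K_H$ in both cases: under $(A)$ the $V_2$-term gives $K_H n^{3/2}$, otherwise the summed $V_0$ and $V_1$ contributions give $K_H n$ (with the factor $\tfrac{1-P^{\bullet}(R)}{1+P^{\bullet}(R)}$ coming from $T(R)$ in the $V_1$-term).
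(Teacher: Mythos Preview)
Your proposal is correct and follows essentially the same route as the paper: reduce to $\frac{[z^n]T_\tau}{[z^n]T}$ via the graph/tree bijection, observe that the unique internal node of $\tau$ lies in $U_0$ so only the configurations $V_0=\{r\}$, $V_1=\{r\}$, $V_2=\{r\}$ occur, sum the resulting contributions from \Cref{thmcoeur} to obtain $T_\tau=z^\ell T^{\mathsf{blo}}\bigl(\mathrm{Occ}_{H,\mathcal{P}}+T\,\mathrm{Occ}_{H,\mathcal{P}^\bullet}+T'\sum_a\mathrm{Occ}_{H,a,\mathcal{P}^\bullet}\bigr)$, and extract asymptotics via the Transfer Theorem. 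Your identification of the constants (including $T(R)=\tfrac{1-P^\bullet(R)}{1+P^\bullet(R)}$ for the $V_1$-term) matches the paper exactly.
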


\begin{proof}
Let $\mathbf{T}^{(n)}$ be a uniform random tree in $\mathcal{T}_{\mathcal{P},\mathcal{P}^{\bullet}}$ with $n$ vertices .

Let $\tau$ be the canonical tree of $H$ and $N_{\mathbf{T}^{(n)},\tau}$ the number of induced subtrees of $\mathbf{T}_n$ isomorphic to $\tau$. Since $\tau$ is the unique substitution tree of $H$, $\mathbb{E}[\mathrm{Occ}_{H}(\mathbf{G}^{(n)})]=\mathbb{E}[N_{\mathbf{T}^{(n)},\tau}]$.

By independence 
$$\mathbb{E}[\mathrm{Occ}_{H}(\mathbf{G}^{(n)})]=\frac{n![z^n]T_{\tau}}{n![z^n]T}=\frac{[z^n]T_{\tau}}{[z^n]T}.$$

From \cref{thmcoeur}, since in this case the only node of $\tau$ is either in $V_0, V_1$ or $V_2$, we have that: $$T_{\tau}=z^{\ell}T^{\mathsf{blo}}\left(T'\left(\sum\limits_{a\in\{1,\dots, \ell\}} \mathrm{Occ}_{H,a,\mathcal{P}^{\bullet}}\right) + T\mathrm{Occ}_{H,\mathcal{P}^{\bullet}}+\mathrm{Occ}_{H,\mathcal{P}}\right).$$

Thus
\begin{itemize}
\item in case $(A)$, with \cref{eqblo,ecut} $$T_{\tau}\sim \frac{R^{\ell}}{R(1+\mathcal{P}^{\bullet}(R))^2}\left(\sum\limits_{a\in\{1,\dots, \ell\}} \mathrm{Occ}_{H,a,\mathcal{P}^{\bullet}}(R)\right)\left(1-\frac{z}{R}\right)^{-1};$$

\item otherwise, $T_{\tau}\sim \left(\frac{1-P^{\bullet}(R)}{1+P^{\bullet}(R)}\mathrm{Occ}_{H,\mathcal{P}^{\bullet}}(R)+\mathrm{Occ}_{H,\mathcal{P}}(R)\right) \frac{R^{\ell}}{\kappa(1+P^{\bullet}(R))}\left(1-\frac{z}{R}\right)^{-\frac{1}{2}}.$
\end{itemize}

By applying the Transfer Theorem \cite[Corollary VI.1 p. 392]{flajolet2009analytic}, 

\begin{itemize}
\item In case $(A)$, $$[z^n]T_{\tau}\sim\frac{R^{\ell}}{R^{n+1}(1+\mathcal{P}^{\bullet}(R))^2}\sum\limits_{a\in\{1,\dots, \ell\}} \mathrm{Occ}_{H,a,\mathcal{P}^{\bullet}}(R)$$

\item Otherwise, $$[z^n]T_{\tau}\sim\left(\frac{1-P^{\bullet}(R)}{1+P^{\bullet}(R)}\mathrm{Occ}_{H,\mathcal{P}^{\bullet}}(R)+\mathrm{Occ}_{H,\mathcal{P}}(R)\right)\frac{R^{\ell}}{\sqrt{\pi}\kappa(1+\mathcal{P}^{\bullet}(R))} \frac{1}{R^nn^{\frac{1}{2}}}$$.
\end{itemize}

By \cref{cor1},
$$[z^n]T\sim \frac{\kappa}{\sqrt{\pi}(1+\mathcal{P}^{\bullet}(R))} \frac{1}{R^nn^{\frac{3}{2}}}.$$

Thus:

\begin{itemize}
    \item In case $(A)$, $$\mathbb{E}[\mathrm{Occ}_{H}(\mathbf{G}^{(n)})]\sim \frac{R^{\ell-1}\sqrt{\pi}\sum\limits_{a\in\{1,\dots, \ell\}} \mathrm{Occ}_{H,a,\mathcal{P}^{\bullet}}(R)}{\kappa (1+P^{\bullet}(R))}n^{\frac{3}{2}},$$
    \item Otherwise, $$\mathbb{E}[\mathrm{Occ}_{H}(\mathbf{G}^{(n)})]\sim \left(\frac{1-P^{\bullet}(R)}{1+P^{\bullet}(R)}\mathrm{Occ}_{H,\mathcal{P}^{\bullet}}(R)+\mathrm{Occ}_{H,\mathcal{P}}(R)\right) \frac{R^{\ell}}{\kappa^2 }n,$$
\end{itemize}
concluding the proof.
\end{proof}

An interesting application of this theorem is the computation of the asymptotic number of $\tilde{P_4}$'s in a random uniform graph of each of the graph classes of \cref{sec:cla}, where $\tilde{P_4}$ is the only labeling of $P_4$ with endpoints $1$ and $4$ and $2$ connected to $1$.  

\begin{lem}
A prime spider has exactly $|K|(|K|-1)$ induced $\tilde{P_4}$. 
A pseudo-spider of size $k$ has exactly $(|K|+2)(|K|-1)$ induced $\tilde{P_4}$.

\end{lem}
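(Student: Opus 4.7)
The argument splits naturally into two parts.

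\emph{Prime spider case.} Let the partition be $(K,S,R)$ with $|K|=m\geq 2$ and $|R|\in\{0,1\}$ (primality forces $|R|\le 1$ by the proposition recalled after \Cref{spider}). The first step is to show that any induced $P_4$ has vertex set $\{k_i,k_j,s_i,s_j\}$ with $i\neq j$, where $s_i=f(k_i)$ and $s_j=f(k_j)$. I would argue as follows: three vertices of $K$ induce a triangle, so at most two $K$-vertices may appear; and a vertex $r\in R$, being adjacent to \emph{every} element of $K$ and non-adjacent to every element of $S$, cannot sit in a $P_4$ (the required edges/non-edges of $P_4$ always clash either with the clique edges inside $K$ or with the $R$--$S$ non-adjacency, a short check handling both midpoint and endpoint positions for $r$). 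Hence an induced $P_4$ uses two $K$-vertices and two $S$-vertices, and a direct verification shows the two $S$-vertices must be the paired ones: in the thin case the path is $s_i-k_i-k_j-s_j$, in the fat case the path is $s_i-k_j-k_i-s_j$. The fat case can alternatively be reduced to the thin case by self-complementarity: the complement of a fat spider on $K\cup S$ is a thin spider with the roles of $K$ and $S$ swapped, and since $P_4$ is self-complementary, the 4-subsets inducing a $P_4$ are identical in both. This yields exactly $\binom{m}{2}$ induced (unlabeled) $P_4$'s. Since $|\mathrm{Aut}(P_4)|=2$, each unordered $P_4$ produces exactly two injections $\mathfrak{I}$ realising $\tilde P_4$ (corresponding to the two ways of mapping the path endpoints to $\{1,4\}$), giving the claimed count $2\binom{m}{2}=|K|(|K|-1)$.

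\emph{Pseudo-spider case.} Write the pseudo-spider as the prime spider with a non-blossom vertex $v$ duplicated into $v'$, possibly with an edge $vv'$. The key observation is that $\{v,v'\}$ is a module of the pseudo-spider, since by construction $v$ and $v'$ share the same external adjacencies. Because $P_4$ is prime it has no non-trivial 2-vertex module, so no induced $P_4$ of the pseudo-spider contains both $v$ and $v'$. Induced $P_4$'s therefore split into those avoiding $v'$ (which are exactly the induced $P_4$'s of the underlying prime spider, $\binom{m}{2}$ of them) and those using $v'$ but not $v$. Swapping $v'$ with $v$ puts the latter in bijection with induced $P_4$'s of the original prime spider that contain $v$; from the explicit list this count is $m-1$, regardless of whether $v\in K$ (then the $P_4$'s through $v=k_i$ are $s_i-k_i-k_j-s_j$ for $j\ne i$) or $v\in S$ (then $v=s_i$ sits only in the paths $s_i-k_i-k_j-s_j$ for $j\ne i$). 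Summing, the pseudo-spider has $\binom{m}{2}+(m-1)=(m-1)(m+2)/2$ unordered induced $P_4$'s, hence $(|K|+2)(|K|-1)$ induced $\tilde P_4$'s after multiplying by the factor $|\mathrm{Aut}(P_4)|=2$.

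\emph{Main obstacle.} The only genuine work lies in the prime-spider enumeration: carefully excluding $P_4$'s that involve $R$ or use unmatched $S$-vertices. Invoking self-complementarity of $P_4$ avoids redoing the fat case from scratch, and the module argument bypasses the otherwise eightfold case split (thin/fat $\times$ $K$/$S$-duplicate $\times$ with/without $vv'$-edge) that a direct pseudo-spider enumeration would require.
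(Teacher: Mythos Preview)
Your proof is correct and follows essentially the same line as the paper's: identify the underlying 4-vertex sets that induce a $P_4$, then multiply by $|\mathrm{Aut}(P_4)|=2$ to obtain the $\tilde P_4$ count. For the prime spider your argument is in fact more detailed than the paper's, which simply asserts that the $P_4$ subsets are exactly $\{k,k',f(k),f(k')\}$ without justification; your exclusion of $r$ and of unmatched $S$-vertices fills that in. For the pseudo-spider, the paper splits on whether the duplicated vertex $d_0$ lies in $K$ or in $S$ and writes out the extra $P_4$ domains explicitly in each case; your observation that $\{v,v'\}$ is a module, combined with the primality of $P_4$, bypasses this case distinction entirely and reduces the count to ``$P_4$'s of the original spider'' plus ``$P_4$'s of the original spider through $v$''. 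This is a small but genuine streamlining of the paper's argument, and it also makes transparent why the answer is independent of the thin/fat distinction, of whether $d_0\in K$ or $d_0\in S$, and of whether the edge $vv'$ is present.
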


\begin{proof}
One can check that for a prime spider, the $P_4's$ are induced by the partial injections $\mathfrak{I}$ whose domain is $\{k,k',f(k),f(k')\}$ for every $(k,k')\in K^2$ with $k\neq k'$ (where $f$ is the function defined in \cref{spider}). For every such domain, only $2$ partial injections are such that the graph induced is $\tilde{P_4}$. Since there is $\binom{|K|}{2}$ possible choices for the domain, we have $|K|(|K|-1)$ induced $\tilde{P_4}$. 

For a pseudo-spider, let $d$ be the duplicate and $d_0$ the original node (as defined in \cref{pseudo}).  The $P_4's$ are induced by the partial injections $\mathfrak{I}$ whose domain is $\{k,k',f(k),f(k')\}$ for every $(k,k')\in K^2$ with $k\neq k'$, and by the partial injections $\mathfrak{I}$ whose domain is $\{d,k',f(d_0),f(k')\}$ (resp.~$\{f^{-1}(d_0),k',d,f(k')\}$) for every $k'\in K$ with $k'\neq d_0$ (resp.~$k'\neq f^{-1}(d_0)$) if $d_0$ is in $K$ (resp.~in $S$). For every such domain, only $2$ partial injections are such that the graph induced is $\tilde{P_4}$. Since there is $\binom{|K|}{2}+K-1$ possible choices for the domain, we have $|K|(|K|-1)+2(|K|-1)=(|K|+2)(|K|-1)$ induced $\tilde{P_4}$.\end{proof}

\begin{rem}
Note that the proof of this lemma implies that $\mathrm{Occ}_{\tilde{P_4},a,\mathcal{P}^{\bullet}}=0$ for all the graph classes mentionned in \cref{sec:cla}.
\end{rem}

\begin{thm}
\label{thmfin}
For each graph class introduced in \Cref{sec:cla}, we have the following expressions for $\mathrm{Occ}_{\tilde{P_4},\mathcal{P}}$ and $\mathrm{Occ}_{\tilde{P_4},\mathcal{P}^{\bullet}}$: \medskip

\begin{center}
\begin{tabular}{|l|l|}
  \hline
  $P_4$-tidy  & $\mathrm{Occ}_{\tilde{P_4},\mathcal{P}_{\mathrm{tidy}}^{\bullet}}(z)=(2+16z+4z^3)\exp(z^2)-1-8z$ \\
   & $\mathrm{Occ}_{\tilde{P_4},\mathcal{P}_{\mathrm{tidy}}}(z)=\mathrm{Occ}_{\tilde{P_4},\mathcal{P}_{\mathrm{tidy}}^{\bullet}}(z)+5z$ \\
   \hline
  $P_4$-lite  & $\mathrm{Occ}_{\tilde{P_4},\mathcal{P}_{\mathrm{lite}}^{\bullet}}(z)=(2+16z+4z^3)\exp(z^2)-1-8z$  \\
   & $\mathrm{Occ}_{\tilde{P_4},\mathcal{P}_{\mathrm{lite}}}(z)=\mathrm{Occ}_{\tilde{P_4},\mathcal{P}_{\mathrm{lite}}^{\bullet}}(z)+4z$ \\
   \hline
  $P_4$-extendible  & $\mathrm{Occ}_{\tilde{P_4},\mathcal{P}_{\mathrm{ext}}^{\bullet}}(z)=1+8z$ \\
  & $\mathrm{Occ}_{\tilde{P_4},\mathcal{P}_{\mathrm{ext}}}=\mathrm{Occ}_{\tilde{P_4},\mathcal{P}_{\mathrm{ext}}^{\bullet}}(z)+5z$ \\
  \hline
  $P_4$-sparse & $\mathrm{Occ}_{\tilde{P_4},\mathcal{P}_{\mathrm{spa}}^{\bullet}}(z)=\mathrm{Occ}_{\tilde{P_4},\mathcal{P}_{\mathrm{spa}}}(z)=2\exp(z^2)-1$ \\
  \hline
  $P_4$-reducible & $\mathrm{Occ}_{\tilde{P_4},\mathcal{P}_{\mathrm{red}}^{\bullet}}(z)=\mathrm{Occ}_{\tilde{P_4},\mathcal{P}_{\mathrm{red}}}=1$  \\
  \hline
  $P_4$-free & $\mathrm{Occ}_{\tilde{P_4},\mathcal{P}_{\mathrm{cog}}^{\bullet}}(z)=\mathrm{Occ}_{\tilde{P_4},\mathcal{P}_{\mathrm{cog}}}(z)=0$  \\
  \hline
\end{tabular}
\end{center}

\end{thm}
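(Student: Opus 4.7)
The strategy mirrors the computation of $P$ and $P^{\bullet}$ carried out in \Cref{sec:exa}. By the same orbit-counting argument that gave
$$P^{\bullet}(z)=\sum_{n}\sum_{s\in R_{\mathcal{P}^{\bullet}_n}}\frac{z^n}{|\mathrm{Aut}(s)|},$$
and since $\mathrm{Occ}_{\tilde{P_4}}$ is invariant under relabeling of $H$, one obtains for each class the weighted sum
$$\mathrm{Occ}_{\tilde{P_4},\mathcal{P}^{\bullet}}(z)=\sum_{n}\sum_{s\in R_{\mathcal{P}^{\bullet}_n}}\frac{\mathrm{Occ}_{\tilde{P_4}}(s)\,z^{n-4}}{|\mathrm{Aut}(s)|},$$
and the analogous identity for $\mathrm{Occ}_{\tilde{P_4},\mathcal{P}}$. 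So the task reduces to plugging in, for each orbit representative $s$ listed in the enumeration giving $P$ and $P^{\bullet}$, the count $\mathrm{Occ}_{\tilde{P_4}}(s)$.

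The plan is to treat the classes in the order suggested by the containments, starting from $P_4$-tidy (the richest case) and specializing. For $\mathcal{P}^{\bullet}_{\mathrm{tidy}}$ the orbits are blossomed prime spiders and blossomed pseudo-spiders parameterized by $|K|=k$. The previous lemma gives $\mathrm{Occ}_{\tilde{P_4}}=k(k-1)$ on a prime spider and $(k+2)(k-1)$ on a pseudo-spider. Combining with the known orbit-counts ($1$ or $2$ prime-spider orbits per $k$, with $|\mathrm{Aut}|=k!$; $4$ or $8$ pseudo-spider orbits, with $|\mathrm{Aut}|=2(k-1)!$), the series $\mathrm{Occ}_{\tilde{P_4},\mathcal{P}^{\bullet}_{\mathrm{tidy}}}$ becomes a sum
$$\Big(\text{prime part}\Big)+\Big(\text{pseudo part}\Big)=\sum_{k\geq 2}\frac{2z^{2k-4}}{(k-2)!}\;+\;8z+\sum_{k\geq 3}\frac{4(k+2)z^{2k-3}}{(k-2)!},$$
after suitable boundary adjustments at $k=2$. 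A routine reindexing $j=k-2$ closes the first sum to $2\exp(z^2)$ and the second to $(16z+4z^3)\exp(z^2)-16z$, which together match the announced formula. The same orbit data yields the formula for $\mathcal{P}_{\mathrm{tidy}}$ by adding the contributions of $C_5$, $P_5$, $\overline{P_5}$ (sizes and automorphism groups already used in \Cref{sec:exa}), with $\mathrm{Occ}_{\tilde{P_4}}$ equal to $10$, $4$, $4$ respectively (the first by inspecting the five cyclic $P_4$'s in $C_5$, the last by complementation, since $P_4$ is self-complementary).

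The remaining classes are obtained by restricting the family of allowed orbits: $\mathcal{P}^{\bullet}_{\mathrm{lite}}=\mathcal{P}^{\bullet}_{\mathrm{tidy}}$ gives the same expression, while $\mathcal{P}_{\mathrm{lite}}$ drops $C_5$, subtracting the $z$-contribution. For $\mathcal{P}^{\bullet}_{\mathrm{ext}}$ and $\mathcal{P}^{\bullet}_{\mathrm{red}}$ only the $k=2$ layer survives, giving constants plus a single linear term in $z$. For $\mathcal{P}_{\mathrm{spa}}=\mathcal{P}^{\bullet}_{\mathrm{spa}}$ only the prime-spider orbits remain, so the computation collapses to $2\exp(z^2)-1$. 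The cograph case is trivial since $\mathcal{P}_{\mathrm{cog}}=\mathcal{P}^{\bullet}_{\mathrm{cog}}=\emptyset$.

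The proof is therefore essentially bookkeeping; there is no genuine obstacle beyond checking the enumeration of $\mathrm{Occ}_{\tilde{P_4}}$ on $\overline{P_5}$ (best argued by self-complementarity of $P_4$, giving the same count as on $P_5$) and making sure the boundary cases $k=2$, where some orbit multiplicities differ, are treated separately before reindexing the infinite sums.
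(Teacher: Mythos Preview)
Your approach is essentially identical to the paper's: rewrite $\mathrm{Occ}_{\tilde{P_4},\mathcal{P}}$ and $\mathrm{Occ}_{\tilde{P_4},\mathcal{P}^{\bullet}}$ as orbit sums weighted by $\mathrm{Occ}_{\tilde{P_4}}(s)/|\mathrm{Aut}(s)|$, plug in the $\tilde{P_4}$-counts from the preceding lemma, and specialize from the tidy case to the subclasses. One small slip to fix: in your displayed sum for the prime-spider part you extend the $k\geq 3$ formula $2z^{2k-4}/(k-2)!$ down to $k=2$, which overcounts by $1$ (at $k=2$ there is a single orbit, not two), so your intermediate total is missing the constant $-1$; this is exactly the ``boundary adjustment'' you flag but do not actually carry out.
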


\begin{proof}
We only detail the computation of $\mathrm{Occ}_{\tilde{P_4},\mathcal{P}_{\mathrm{tidy}}^{\bullet}}$ and $\mathrm{Occ}_{\tilde{P_4},\mathcal{P}_{\mathrm{tidy}}}$ for $P_4$-tidy graphs as this is the most involved case. Note that, with the notations of \cref{sec:exa},

\begin{align*}
&\mathrm{Occ}_{\tilde{P_4}, \mathcal{P}}(z)=\sum\limits_{n\in\N}\sum\limits_{H\in \mathcal{R}_{\mathcal{P}_n}}\sum\limits_{H'\sim H}\frac{\mathrm{Occ}_{\tilde{P_4}}(H)z^{N(H)-4}}{N(H)!}=\sum\limits_{n\in\N}\sum\limits_{H\in \mathcal{R}_{\mathcal{P}_n}}\frac{\mathrm{Occ}_{\tilde{P_4}}(H)z^{N(H)-4}}{|\mathrm{Aut}(H)|}
\end{align*}
and similarly

\begin{align*}
&\mathrm{Occ}_{\tilde{P_4}, \mathcal{P}^{\bullet}}(z)=\sum\limits_{n\in\N}\sum\limits_{H\in \mathcal{R}_{\mathcal{P}^{\bullet}_n}}\sum\limits_{H'\sim H}\frac{\mathrm{Occ}_{\tilde{P_4}}(H)z^{N(H)-4}}{N(H)!}=\sum\limits_{n\in\N}\sum\limits_{H\in \mathcal{R}_{\mathcal{P}_n}}\frac{\mathrm{Occ}_{\tilde{P_4}}(H)z^{N(H)-4}}{|\mathrm{Aut}(H)|}
\end{align*}

According to \Cref{ttidy}, $\mathcal{P}_{\mathrm{tidy}}$ is composed of one $C_5$ that has $10$ automorphisms and $10$ induced $\tilde{P_4}$ and all its relabelings, one $P_5$, and one $\overline{P_5}$ that both have $2$ automorphisms and $4$ induced $\tilde{P_4}$'s and all their relabelings.

For $k\geq 3$ (resp.~$k=2$), there are thin and fat spiders corresponding to the $2$ (resp.~$1$) different orbits of the action $\Phi_{2k}$ over prime spiders of size $2k$, each having $k!$ automorphisms and $k(k-1)$ $\tilde{P_4}$'s. 

For $k\geq 3$ (resp.~$k=2$), there are thin and fat pseudo-spiders, the duplicated vertex can come from $K$ or $S$, and can be connected or not to the initial vertex. These $8$ (resp.~$4$) cases correspond to the $8$ (resp.~$4$) different orbits of the action $\Phi_{2k+1}$ over pseudo-spiders of size $2k+1$, each having $2(k-1)!$ automorphisms and $(k+2)(k-1)$ $\tilde{P_4}$'s.

Thus we have 

\begin{align*}
\mathrm{Occ}_{\tilde{P_4}, \mathcal{P}_{\mathrm{tidy}}}(z)&=z+\frac{4z}{2}+\frac{4z}{2}+\frac{2}{2}+2\sum\limits_{k\geq 3}\frac{k(k-1)z^{2k-4}}{k!}+4\frac{4z}{2}+8\sum\limits_{k\geq 3}\frac{(k+2)(k-1)z^{2k-3}}{2 (k-1)!}\\
&=5z+1+2\sum\limits_{k\geq 1}\frac{z^{2k}}{k!}+8z+4\sum\limits_{k\geq 1}\frac{(k+4)z^{2k+1}}{k!}\\
&=5z+1+2\exp(z^2)-2+8z+4\sum\limits_{k\geq 0}\frac{z^{2k+3}}{k!}+16\sum\limits_{k\geq 1}\frac{z^{2k+1}}{k!}\\
&=5z+2\exp(z^2)-1+4z^3\exp(z^2)+16z\exp(z^2)-8z\\
&=5z+(2+16z+4z^3)\exp(z^2)-1-8z
\end{align*}

Now let us compute $\mathrm{Occ}_{\tilde{P_4}, \mathcal{P}^{\bullet}_{\mathrm{tidy}}}(z)$. For $k\geq 3$ (resp.~$k=2$), there are thin and fat spiders with blossom corresponding to the $2$ (resp.~$1$) different orbits of the action $\Phi_{2k}$ over blossomed prime spiders $G$ with $2k$ non blossomed vertices, each having $k!$ automorphisms and $k(k-1)$ $\tilde{P_4}$'s. 

For $k\geq 3$ (resp.~$k=2$), there are thin and fat pseudo-spiders, the duplicated vertex can come from $K$ or $S$, and can be connected or not to the initial vertex. These $8$ (resp.~$4$) cases correspond to the $8$ (resp.~$4$) different orbits of the action $\Phi_{2k+1}$ over blossomed pseudo-spiders  with $2k+1$ non blossomed vertices, each having $2(k-1)!$ automorphisms and $(k+2)(k-1)$ $\tilde{P_4}$'s. 

Hence
$$\mathrm{Occ}_{\tilde{P_4}, \mathcal{P}^{\bullet}_{\mathrm{tidy}}}(z)=\frac{2}{2}+2\sum\limits_{k\geq 3}\frac{k(k-1)z^{2k-4}}{k!}+4\frac{4z}{2}+8\sum\limits_{k\geq 3}\frac{(k+2)(k-1)z^{2k-3}}{2 (k-1)!}$$
Thus $\mathrm{Occ}_{\tilde{P_4}, \mathcal{P}^{\bullet}_{\mathrm{tidy}}}(z)+5z=\mathrm{Occ}_{\tilde{P_4}, \mathcal{P}_{\mathrm{tidy}}}(z)$ which gives the announced result.\end{proof}

Combining \Cref{thmprime}, \Cref{thmfin} and the remark above, we get that $\tilde{P_4}$ does not verify $(A)$, thus $\tilde{P_4}$ belongs to the linear case of \Cref{thmprime}:

\begin{cor}\label{cora}
Let $\mathbf{G}^{(n)}$ be a graph of size $n$ taken uniformly at random in one of the following families: $P_4$-sparse, $P_4$-tidy, $P_4$-lite, $P_4$-extendible, $P_4$-reducible or $P_4$-free.
Then $\mathbb{E}[\mathrm{Occ}_{\tilde{P_4}}(\mathbf{G}^{(n)})]\sim K_{\tilde{P_4}} n$ where $K_{\tilde{P_4}}$ is defined in \Cref{thmprime}. 

\end{cor}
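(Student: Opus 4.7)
The plan is to specialize \Cref{thmprime} to the prime graph $H=\tilde{P_4}$. The hypotheses of the theorem are quickly checked: $\tilde{P_4}$ is prime, since any $P_4$ has only trivial modules, and $|\tilde{P_4}|=4$, so the theorem applies and gives one of two asymptotic regimes depending on whether condition $(A)$ holds, i.e.\ whether there exists $a\in\{1,2,3,4\}$ such that $\mathrm{Occ}_{\tilde{P_4},a,\mathcal{P}^{\bullet}}(R)>0$.

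The next, and really the only substantive, step is to verify that condition $(A)$ fails in each of the six classes. This is exactly the remark following the lemma counting induced $\tilde{P_4}$'s in (pseudo-)spiders. Inspecting the enumeration carried out there, every induced $\tilde{P_4}$ in a (pseudo-)spider uses only vertices of $K\cup S$ (together, in the pseudo-spider case, with the duplicated vertex, which also lies in $K\cup S$); no induced $\tilde{P_4}$ ever uses a vertex of $R$. Now in each class the blossom of an element of $\mathcal{P}^{\bullet}$ is precisely the $R$-vertex of the underlying blossomed (pseudo-)spider. Consequently $\mathrm{Occ}_{\tilde{P_4},a}(H)=0$ for every $H\in\mathcal{P}^{\bullet}$ and every $a\in\{1,2,3,4\}$, so $\mathrm{Occ}_{\tilde{P_4},a,\mathcal{P}^{\bullet}}\equiv 0$ in each class (and trivially so for cographs since $\mathcal{P}^{\bullet}_{\mathrm{cog}}=\emptyset$). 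Thus $(A)$ fails for $\tilde{P_4}$ in every class.

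Applying the linear branch of \Cref{thmprime} then yields directly
$$\mathbb{E}[\mathrm{Occ}_{\tilde{P_4}}(\mathbf{G}^{(n)})]\sim K_{\tilde{P_4}}\,n, \qquad K_{\tilde{P_4}}=\left(\frac{1-P^{\bullet}(R)}{1+P^{\bullet}(R)}\mathrm{Occ}_{\tilde{P_4},\mathcal{P}^{\bullet}}(R)+\mathrm{Occ}_{\tilde{P_4},\mathcal{P}}(R)\right)\frac{R^{4}}{\kappa^{2}}.$$
The explicit numerical constant in each class is then obtained by substituting the values of $R$ and $\kappa$ from \Cref{sec4.2} and the evaluations of $\mathrm{Occ}_{\tilde{P_4},\mathcal{P}}(R)$ and $\mathrm{Occ}_{\tilde{P_4},\mathcal{P}^{\bullet}}(R)$ provided by \Cref{thmfin}. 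There is no genuine obstacle here: the entire argument is a direct application of \Cref{thmprime}, and the only place requiring any verification is the failure of $(A)$, which is an immediate structural observation about induced $P_4$'s avoiding the $R$-vertex of a (pseudo-)spider.
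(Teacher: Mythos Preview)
Your proof is correct and follows exactly the same route as the paper: apply \Cref{thmprime} to $H=\tilde{P_4}$, use the remark (based on the lemma counting induced $\tilde{P_4}$'s in (pseudo-)spiders) that $\mathrm{Occ}_{\tilde{P_4},a,\mathcal{P}^{\bullet}}\equiv 0$ so that condition $(A)$ fails, and read off the linear asymptotic with the constants supplied by \Cref{thmfin}. Your explicit justification of why no induced $\tilde{P_4}$ uses the $R$-vertex is a helpful expansion of the paper's one-line remark, but the argument is otherwise identical.
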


Here are the numerical approximations of $K_{\tilde{P_4}}$ in the different cases: \medskip

\begin{center}
\begin{tabular}{|c|c|}
  \hline
  class of graph & $K_{\tilde{P_4}}$  \\
  \hline
  $P_4$-tidy  & $0.29200322$  \\
  $P_4$-lite  & $0.28507010$  \\
  $P_4$-extendible  & $0.24959979$ \\
  $P_4$-sparse & $0.10280703$  \\
  $P_4$-reducible & $0.08249263$  \\
  $P_4$-free & $0$ \\
  \hline
\end{tabular}
\end{center}
\medskip

\textbf{Acknowledgements.} I would like to thank Lucas Gerin and Frédérique Bassino for useful discussions and for carefully reading many earlier versions of this manuscript.

\bibliographystyle{plain}
\bibliography{biblio}

\vfill 
\noindent \textsc{Théo Lenoir} \verb|theo.lenoir@polytechnique.edu|\\
\textsc{Cmap, Cnrs}, \'Ecole polytechnique,\\
Institut Polytechnique de Paris,\\
91120 Palaiseau, France

\end{document}